%\documentclass{casabs}
%\begin{document}
%\title{}
%\author{}
%\date{}
%\address{}
%\contact{}
%\contactaddress{}
%\contactemail{}
%\contactphone{}
%\contactfax{}
%\registrant{}
%\presentation{}
%\category{}
%\keywords{}
%\begin{abstract}
%We consider a two dimensional random walk in an i.i.d. random environment with a strong form of transcience. Namely, the y-coordinate of the random walk increases with time. We prove a functional CLT for the quenched expected position of the random walk indexed by its level crossing times. We begin with a variation of the Martingale Central Theorem. The main part of the paper checks the conditions of the theorem for our problem
%\end{abstract}
%\end{document}

\documentclass[11pt]{amsart}
\usepackage{amsmath,amssymb,amsthm,latexsym,graphicx}
%\usepackage{showkeys}
%%%%%%%%
\addtolength{\hoffset}{-.5in} 
\addtolength{\textwidth}{1.0in}
%\addtolength{\footskip}{.3in}
%\setlength{\oddsidemargin}{0in}
%\setlength{\evensidemargin}{0in}
%\setlength{\footskip}{0.2in}
%\setlength{\topmargin}{0.5in}
%\addtolength{\textheight}{.5in}
%\baselineskip=10pt

\newtheorem{theorem}{\sc Theorem}[section]
\newtheorem{lemma}[theorem]{\sc Lemma}
\newtheorem{proposition}[theorem]{\sc Proposition}
\newtheorem{corollary}[theorem]{\sc Corollary}
\newtheorem{assumption}[theorem]{\sc Assumption}
\newtheorem{remark}[theorem]{\sc Remark}

\newcommand{\be}{\begin{equation}}
\newcommand{\ee}{\end{equation}}

%%notation specific to this material 

 %% environment measure 
 %environment at a level 
  %shift on paths
  %set of admissible steps 

%%generic definitions 

\allowdisplaybreaks[1]

\begin{document}
%\footnotetext{Department of Mathematics, University of Wisconsin, 480 Lincoln Drive, Madison WI 53706-1388 USA; E-mail: joseph@math.wisc.edu\\
%\textit{AMS 2000 subject classifications:} 60K37, 60F05, 60F17, 82D30.\\
%\textit{Keywords and phrases}. random walk, random environment, central limit theorem, invariance principle, Green function.}
\title[Fluctuations of quenched mean]{ Fluctuations of the quenched mean of a planar random walk in an i.i.d. random environment with forbidden direction}
\author[M.~Joseph]{Mathew Joseph}
\address{Mathew Joseph\\ University of Wisconsin-Madison\\ 
Mathematics Department\\ Van Vleck Hall\\ 480 Lincoln Dr.\\  
Madison WI 53706-1388\\ USA.}
\email{joseph@math.wisc.edu}
\urladdr{http://www.math.wisc.edu/~joseph}
\keywords{random walk in random environment, central limit theorem, invariance principle, Green function.}
\subjclass[2000]{60K37, 60F05, 60F17, 82D30} 
\date{\today}
\begin{abstract}
We consider an i.i.d. random environment with a strong form of transience on the two dimensional integer lattice. Namely, the walk always moves forward in the $y$-direction. We prove a functional CLT for the quenched expected position of the random walk indexed by its level crossing times. We begin with a variation of the Martingale Central Limit Theorem. The main part of the paper checks the conditions of the theorem for our problem.
\end{abstract}
\maketitle

%\author{Mathew Joseph \thanks{ Department of Mathematics, University of Wisconsin, 480 Lincoln Drive, Madison WI 53706-1388 USA; e-mail: joseph@math.wisc.edu \newline \textit{AMS 2000 subject classifications:} 60K37, 60F05, 60F17, 82D30.\newline \textit{Keywords and phrases}. random walk, random environment, central limit theorem, invariance principle, Green function.}}%\footnotemark[]}
 %\footnote{Department of Mathematics, University of Wisconsin, 480 Lincoln Drive, Madison WI 53706-1388 USA; E-mail: joseph@math.wisc.edu\\
%\textit{AMS 2000 subject classifications:} 60K37, 60F05, 60F17, 82D30.\\
%\textit{Keywords and phrases}. random walk, random environment, central limit theorem, invariance principle, Green function.}
%\footnote{Department of Mathematics, University of Wisconsin, 480 Lincoln Drive, Madison WI 53706-1388 USA; E-mail: joseph@math.wisc.edu}

%\date{April 2, 2008 }

%to get date printout, comment out above line

%\maketitle
%\footnotetext{Department of Mathematics, University of Wisconsin, 480 Lincoln Drive, Madison WI 53706-1388 USA; E-mail: joseph@math.wisc.edu}
%\begin{abstract}

%\medskip

%\noindent
%{\bf Keywords}. 

% \hfil\break
%\noindent {\bf 2000 Mathematics Subject Classification}. 

%\footnotetext{Department of Mathematics, University of Wisconsin, 480 Lincoln Drive, Madison WI 53706-1388 USA; E-mail: joseph@math.wisc.edu}

\section{Introduction}
One of several models in the study of random media is random walks in random environment (RWRE). An overview of this topic can be found in the lecture notes by Sznitman \cite{szn} and Zeitouni \cite{zet}. While one dimensional RWRE is fairly well understood, there are still many simple questions (transience/recurrence, law of large numbers, central limit theorems) about multidimensional RWRE which have not been resolved. In recent years, much progress has been made in the study of multidimensional RWRE but it is still far from complete. Let us now describe the model.

Let $\Omega = \{\omega=(\omega_{x \cdot})_{x \in \mathbb{Z}^d}:\omega_{x \cdot}=(\omega_{x,y})_{y \in \mathbb{Z}^d} \in [0,1]^{\mathbb{Z}^d}, \sum_y \omega_{x,y}=1 \}$ be the set of transition probabilities for different sites $x \in \mathbb{Z}^d$. Let $T_z$ denote the natural shifts on $\Omega$ so that $(T_z\omega)_{x\cdot}=\omega_{x+z,\cdot}$. $T_z$ can be viewed as shifting the origin to $z$. Let $\mathcal{S}$ be the product $\sigma$-field on the set $\Omega$. A probability measure $\mathbb{P}$ over $\Omega$ is chosen so that $(\Omega,\mathcal{S},\mathbb{P},(T_z)_{z \in \mathbb{Z}^d})$ is stationary and ergodic. The set $\Omega$ is the environment space and $\mathbb{P}$ gives a probability measure on the set of environments. Hence the name ``random environment". For each $\omega \in \Omega$ and $x \in \mathbb{Z}^d$, define a Markov chain $(X_n)_{n \ge 0}$ on $\mathbb{Z}^d$ and a probability measure $P_x^{\omega}$ on the sequence space such that
  \[P_x^{\omega}(X_0 = x)=1, \qquad P_x^{\omega}(X_{n+1}=z\vert X_n=y)= \omega_{y,z-y} \,\, \mbox{ for } y,z \in \mathbb{Z}^d. \]
 There are thus two steps involved. First the environment $\omega$ is chosen at random according to the probability measure $\mathbb{P}$ and then we have the ``random walk" with transition probabilities $P_x^{\omega}$ (assume $x$ is fixed beforehand). $P_x^{\omega}(\cdot)$  gives a probability measure on the space $(\mathbb{Z}^d)^{\mathbb{N}}$ and is called the \textit{quenched measure}. The \textit{averaged measure} is 
 \[ P_{x}( (X_n)_{n \ge 0} \in A) =\int_{\Omega} P_x^{\omega}((X_n)_{n \ge 0} \in A)\mathbb{P}(d\omega).\]

Other than the behaviour of the walks itself, a natural quantity of interest is the \textit{quenched mean} $E_x^{\omega}(X_n)= \sum_z z P_x^{\omega}(X_n=z)$, i.e. the average position of the walk in $n$ steps given the environment $\omega$. Notice that as a function of $\omega$, this is a random variable. A question of interest would be a CLT for the quenched mean. A handful of papers have dealt with this subject. Bernabei~\cite{ber} and Boldrighini, Pellegrinotti~\cite{bp} deal with this question in the case where $\mathbb{P}$ is assumed to be i.i.d. and there is a direction in which the walk moves deterministically one step upwards (time direction). Bernabei~\cite{ber} showed that the centered quenched mean, normalised by its standard deviation, converges to a normal random variable and he also showed that the standard deviation is of order $n^{\frac{1}{4}}$. In ~\cite{bp}, the authors prove a central limit theorem for the correction caused by the random environment on the mean of a test function. Both these papers however assume that there are only finitely many transition probabilities. Bal\'azs, Rassoul-Agha and Sepp\"al\"ainen~\cite{brs} replace the assumption of finitely many transition probabilites with a ''finite range" assumption for $d=2$ to prove an invariance principle for the quenched mean. In this paper we restrict to $d=2$ and look at the case where the walk is allowed to make larger steps upward. We prove a functional CLT for the position of the walk on crossing level $n$; there is a nice martingale structure in the background as will be evident in the proof.

Another reason for looking at the quenched mean is that recently a number of papers by Rassoul-Agha and Sepp\"al\"ainen (see \cite{rsptrf}, \cite{rsqi}) prove \textit{quenched} CLT's for $X_n$ using subdiffusivity of the quenched mean. Let us now describe our model.

\bigskip

 \textbf{Model:} We restrict ourselves to dimension $2$. The environment is assumed to be i.i.d. over the different sites. The walk is forced to move at least one step in the $e_2=(0,1)$ direction; this is a special case of walks with forbidden direction (\cite{rsqi}). We assume a finite range on the steps of the walk and also an ellipticity condition .\\

\begin{center}\includegraphics[height=1in]{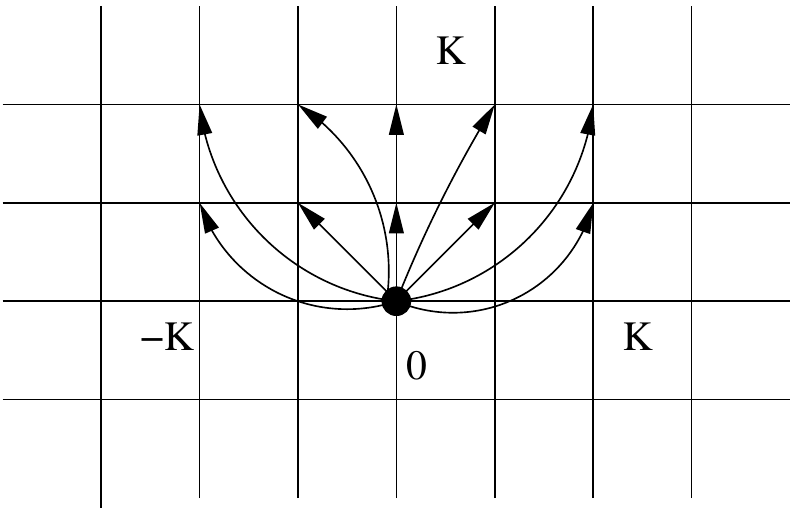} \end{center}

\begin{assumption}
\label{model} 
% \begin{enumerate}
 (i)$\mathbb{P}$ is i.i.d. over $x \in \mathbb{Z}^2$ ($\mathbb{P}$ is a product measure on $\Omega$).\\
%\item
 (ii)There exists a positive integer $K$ such that 
\[ \mathbb{P}\Big{(}\omega_{0\mbox{,}x}=0 \mbox{ for } x \not\in \{-K,-K+1,\dots, K\}\times \{1,2,\dots,K\}\Big{)}=1 .\]
%\item
(iii)There exists some $\delta >0$ such that
\begin{equation*}
%\label{elliptic}
 \mathbb{P}\Big{(}\omega_{0\mbox{,}x} > \delta \mbox{ for } x \in \lbrace-K,-K+1,\dots, K\rbrace\times\lbrace 1,2,\cdots, K \rbrace \Big{)}=1  .  
\end{equation*}
%\item
%The span of the vectors $\{ x: \mathbb{P}\big{(} \omega(0,x) >0 \big{)}>0 \}$ is $\mathbb{Z}^2$.
% \end{enumerate}
\end{assumption}
\begin{remark}
 Conditon (iii) is quite a strong condition. The only place where we have used it are in Lemma \ref{expL} and in showing the irreducibility of the random walk $\overline{q}$ in Section 3.2 and the Markov chain $Z_k$ in the proof of Lemma \ref{2sup}. Condition (iii) can certainly be made weaker. 
\end{remark}

Before we proceed, a few words on the notation. For $x\in \mathbb{R}$, $[x]$ will denote the largest integer less than or equal to $x$. For $x,y \in \mathbb{Z}^2$, $P_{x,y}^{\omega}( \cdot) $ will denote the probabilities for two independent walks in the same environment $\omega$ and $P_{x,y}= \mathbb{E} P_{x,y}^{\omega}$.  $\mathbb{E}$ denotes $\mathbb{P}$-expectation. $E_x^{\omega}, E_{x,y}^{\omega}, E_x, E_{x,y}$ are the expectations under $P_x^{\omega}, P_{x,y}^{\omega}, P_x, P_{x,y}$ respectively. For $r,s \in \mathbb{R}$, $P_{r,s}^{\omega}, P_{r,s}, E_{r}^{\omega},E_{r,s}^{\omega},E_{r,s}$ will be shorthands for $P_{([r],0),([s],0)}^{\omega},P_{([r],0),([s],0)},E_{([r],0)}^{\omega},E_{([r],0),([s],0)}^{\omega}$ and $E_{([r],0),([s],0)}$ respectively. $C$ will denote constants whose value may change from line to line. Elements of $\mathbb{R}^2$ are regarded as column vectors. For two vectors $x$ and $y$ in $\mathbb{R}^2$, $x\cdot y$ will denote the dot product between the vectors.
%\pagebreak

\bigskip

\section{Statement of Result}

Let $\lambda_n = \inf \{k\ge 1:X_k \cdot e_2 \geq n \}$ denote the first time when the walk reaches level $n$ or above. Denote the drift $D(x,\omega)$ at the point $x$ by 
\[ D(x,\omega) = \sum_{z}z \omega_{x,z} \]
and let
\begin{eqnarray}
 \label{w}
 \hat{w}= \Big{(}1, - \frac{\mathbb{E}(D\cdot e_1)}{\mathbb{E}(D\cdot e_2)} \Big{)}^T .
\end{eqnarray}
Let $B(\cdot)$ be a Brownian motion in $\mathbb{R}^2$ with diffusion matrix $\Gamma=\mathbb{E}(DD^{T})-\mathbb{E}(D)\mathbb{E}(D)^{T}$. For a fixed positive integer $N$ and real numbers $r_1,r_2,\cdots,r_N$ and $\theta_1,\theta_2,\cdots,\theta_N$, define the function $h:\mathbb{R} \to \mathbb{R}$ by  
\begin{eqnarray}
\label{h}
h(s)=\sum_{i=1}^N \sum_{j=1}^N \theta_i \theta_j\frac{\sqrt{s}}{\beta \overline{\sigma}^2}\int_0^{\frac{\overline{\sigma}^2}{c_1}} \frac{1}{\sqrt{2\pi v}} \exp\big( -\frac{(r_i-r_j)^2}{2sv} \big) dv
 % h(s)=\sum_{i=1}^N\sum_{j=1}^N \theta_i \theta_j \frac{2\sqrt{s} }{\beta \overline{\sigma} \sqrt{c_1}\sqrt{2\pi}} \exp\big{(} - \frac{(r_i-r_j)^2}{2s\overline{\sigma}^2}\big{)} 
\end{eqnarray}
for positive constants $\beta, \overline{\sigma}, c_1$ defined further below in (\ref{beta}), (\ref{sigma}) and (\ref{c1}) respectively.\\
Let $e_1=(1,0)^T$. Define for $s,r \in \mathbb{R}$
\begin{eqnarray}
 \label{xi} 
 \xi_n(s,r)= E_{r\sqrt{n}}^{\omega}(X_{\lambda_{[ns]}}\cdot e_1)-E_{r\sqrt{n}}(X_{\lambda_{[ns]}}\cdot e_1).
\end{eqnarray}
Notice that for any fixed $r \in \mathbb{R}$, $\xi_n(s,r)$ is the centered quenched mean of the position on crossing level $[ns]$ of a random walk starting at $(r\sqrt{n},0)$. The theorem below gives a functional CLT for $\xi_n(s,r)$.
%Denote the drift $D(x,\omega)$ at the point $x$ by 
%\[ D(x,\omega) = \sum_{z}z \omega(x,x+z) \]

\bigskip

\begin{theorem}
\label{theorem}
Fix a positive integer $N$. For any $N$ distinct real numbers $r_1<r_2<\cdots < r_N$ and for all vectors $\mathcal{\theta}=(\theta_1,\theta_2,\cdots,\theta_N)$, we have
%\[ \sum_{i=1}^N \theta_i \frac{ M^{r_i \sqrt{n}}_{(n\cdot)}}{n^{\frac{1}{4}}} \Rightarrow B(h(\cdot))
% \]

\[ \sum_{i=1}^N \theta_i \frac{\xi_n(\cdot,r_i) }{n^{\frac{1}{4}}} \Rightarrow \hat{w} \cdot B(h(\cdot))
 \]
where the above convergence is the weak convergence of processes in $D[0,\infty)$ with the Skorohod topology. %$B(\cdot)$ is a Brownian motion in $\mathbb{R}^2$ with diffusion matrix $\Gamma=\mathbb{E}(DD^{T})-\mathbb{E}(D)\mathbb{E}(D)^{T}$ and 
%\[ h(s)=\sum_{i=1}^N\sum_{j=1}^N \theta_i \theta_j \frac{2\sqrt{s} }{\beta \overline{\sigma} \sqrt{c_1}\sqrt{2\pi}} \exp\big{(} - \frac{(r_i-r_j)^2}{2s\overline{\sigma}^2}\big{)} \]
%for some positive constants $\beta, \overline{\sigma}, c_1$ defined in (\ref{beta}), (\ref{sigma}) and (\ref{c1}) respectively.
\end{theorem}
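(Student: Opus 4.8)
The plan is to write the rescaled process $\sum_{i}\theta_i\,\xi_n(\cdot,r_i)/n^{1/4}$ as a martingale, up to asymptotically negligible corrections, and then invoke the variant of the martingale invariance principle that the paper sets up. The filtration is generated level by level: let $\mathcal{G}_l$ be the $\sigma$-field generated by the environment at sites on levels $\le l$. Because the walk always advances in the $e_2$-direction it visits each level at most once, so writing $q^{\omega,i}_l$ for the quenched law of the entrance site $X_{\lambda_l}$ of the walk from $(r_i\sqrt n,0)$, one has
\begin{equation*}
\xi_n(s,r_i)\;=\;\sum_{l< [ns]}\ \sum_{x:\,x\cdot e_2=l} q^{\omega,i}_l(x)\,\bigl(\hat w\cdot D(x,\omega)\bigr)\;+\;(\text{lower-order corrections from overshoot of the levels}).
\end{equation*}
Each $q^{\omega,i}_l(\cdot)$ is $\mathcal G_{l-1}$-measurable while $\mathbb{E}[\hat w\cdot D(x,\omega)]=\hat w\cdot\mathbb{E}D=0$, so the level-$l$ summand is a martingale difference for $(\mathcal G_l)$. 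Producing this representation — and in particular pinning down the direction $\hat w$ of \eqref{w} — is the one genuinely model-dependent step: a site carrying excess $e_2$-drift makes the walk climb the remaining levels faster and hence shortens its accumulated $e_1$-displacement at rate $\mathbb{E}(D\cdot e_1)/\mathbb{E}(D\cdot e_2)$, so it is the drift-free combination $\hat w\cdot D$ that governs the fluctuation. Handling the overshoot corrections cleanly is the fussy part here, and it is also why a variant of, rather than the textbook, martingale CLT is needed, since the array of differences still depends on the terminal level $[ns]$.

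Granting this, the theorem reduces to checking the hypotheses of the martingale invariance principle: (i) the predictable bracket $n^{-1/2}\sum_{l<[ns]}\mathbb{E}[(\text{level-}l\text{ difference})^2\mid\mathcal G_{l-1}]$ converges in probability, locally uniformly in $s$, to the variance function $\hat w^{T}\Gamma\hat w\,h(s)$ of the candidate limit; (ii) a conditional Lindeberg condition at scale $n^{1/4}$; (iii) tightness in $D[0,\infty)$. The crux is (i). Since drifts at distinct sites are $\mathbb P$-independent and $\hat w\cdot\mathbb{E}D=0$, the cross terms over $x\ne x'$ on a fixed level vanish and the conditional expectation collapses to $\hat w^{T}\Gamma\hat w\sum_{i,j}\theta_i\theta_j\sum_{x:\,x\cdot e_2=l}q^{\omega,i}_l(x)\,q^{\omega,j}_l(x)$, whose $\mathbb{E}$ is (up to a concentration estimate) $\hat w^{T}\Gamma\hat w\sum_{i,j}\theta_i\theta_j\,P_{r_i\sqrt n,\,r_j\sqrt n}(X^1_{\lambda_l}=X^2_{\lambda_l})$, the coincidence probability at level $l$ of two independent walks started from $(r_i\sqrt n,0)$ and $(r_j\sqrt n,0)$. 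This is how $\Gamma$ and the double sum over $i,j$ in \eqref{h} arise.

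Estimating and summing these coincidence probabilities over $l<[ns]$ is the main obstacle. For $i=j$ it is a Green's-function estimate for the entrance-site chain run to level $l$ — the chain the paper calls $\overline q$, whose irreducibility uses the ellipticity Assumption~\ref{model}(iii) — giving a same-site probability of order $l^{-1/2}$ and hence $\sum_{l\le[ns]}\asymp\sqrt n\,\sqrt s$, which produces the diagonal $\sqrt s\,(\beta\overline\sigma^2)^{-1}\!\int_0^{\overline\sigma^2/c_1}(2\pi v)^{-1/2}dv$ part of \eqref{h}. For $i\ne j$ the two walks start a macroscopic distance $(r_i-r_j)\sqrt n$ apart, their difference performs an approximate diffusion, the coincidence probability at level $l$ carries a heat-kernel factor $\exp(-\,\mathrm{const}\cdot(r_i-r_j)^2 n/l)$, and writing $l$ as $n$ times a macroscopic variable and integrating reproduces exactly the Gaussian integral over $v$ appearing in \eqref{h}; the constants $\beta,\overline\sigma,c_1$ are defined precisely so that this bookkeeping matches. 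The real work is to carry all of this out with uniformity in $n$ and to upgrade it from an annealed identity to a statement valid in $\mathbb P$-probability — for the bracket this means a variance bound, i.e. a four-walk coincidence estimate — and this is the content that ``checks the conditions'' and makes up the bulk of the paper.

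Finally, (ii) follows from a uniform bound showing a single level contributes $O(l^{-1/4})$, hence $o(n^{1/4})$ once (i) controls the accumulated mass; and (iii) follows from a moment bound on the increments of $\sum_i\theta_i\xi_n(\cdot,r_i)$ — again a multi-point coincidence/Green's-function estimate of the same type — together with the continuity of $h$ (which is continuous with $h(0)=0$), so that the limit $\hat w\cdot B(h(\cdot))$ is continuous and finite-dimensional convergence upgrades to weak convergence in the Skorohod topology on $D[0,\infty)$.
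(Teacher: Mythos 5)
Your proposal mirrors the paper's own proof: writing $\xi_n$ as $\hat w$ dotted with a level-indexed martingale (the paper's $M_k^{n,i}$ of (\ref{mart}), whose increments are exactly your $\sum_x q^{\omega,i}_l(x)\{D(x,\omega)-\mathbb{E}D\}$), invoking the variant martingale FCLT of Lemma~\ref{MCLT}, collapsing the predictable bracket to two-walk coincidence probabilities at common levels, and proving convergence of these via an annealed Green's-function/LLN argument (Proposition~\ref{STEP 1}, using $L_n/n\to c_1$ and the $\overline q$-walk potential kernel) combined with a quenched-to-annealed four-walk variance bound (Proposition~\ref{STEP 2}). The one inaccuracy is your treatment of condition~(ii): the increments $M_k^{n,i}-M_{k-1}^{n,i}$ are deterministically bounded by a constant depending only on $K$, so after the $n^{-1/4}$ normalization the Lindeberg term is identically zero for all large $n$ — no $O(l^{-1/4})$ estimate or ``accumulated mass'' argument is needed, and tightness is already packaged inside Lemma~\ref{MCLT} rather than requiring a separate moment bound.
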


\bigskip

\begin{remark} $\Phi(\cdot)= \hat{w} \cdot B(h(\cdot))$ is a mean zero Gaussian process  with covariances
\[ Cov\Big{(}\Phi(s),\Phi(t)\Big{)}=h(s) \hat{w}^T \Gamma \hat{w} , \hspace{1cm} s<t.\]
\end{remark}

\bigskip

\begin{corollary}
\[  \frac{\xi_n(\cdot,0)}{n^{\frac{1}{4}}} \Rightarrow \hat{w}\cdot B(g(\cdot)) \]
where the above convergence is the weak convergence of processes in $D[0,\infty)$ with the Skorohod topology. Here  
\[ g(s)=\frac{2\sqrt{s} }{\beta \overline{\sigma} \sqrt{c_1}\sqrt{2\pi}} \]
for positive constants $\beta, \overline{\sigma}, c_1$ defined in (\ref{beta}), (\ref{sigma}) and (\ref{c1}) respectively.
\end{corollary}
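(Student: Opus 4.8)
The plan is to deduce the Corollary directly from Theorem \ref{theorem} by specializing to $N=1$, $r_1=0$ and $\theta_1=1$. With these choices the left-hand side $\sum_{i=1}^N \theta_i\,\xi_n(\cdot,r_i)/n^{1/4}$ of Theorem \ref{theorem} reduces to $\xi_n(\cdot,0)/n^{1/4}$, and the limit process is $\hat w\cdot B(h(\cdot))$ with $h$ as in (\ref{h}). So the only thing left to check is that, under this specialization, the function $h$ coincides with the function $g$ appearing in the statement of the Corollary.

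When $N=1$ and $r_1=0$ the double sum defining $h$ in (\ref{h}) has the single term $i=j=1$, for which $\theta_i\theta_j=1$ and $r_i-r_j=0$, so the Gaussian factor $\exp(-(r_i-r_j)^2/(2sv))$ is identically $1$. Hence
\begin{equation*}
h(s)=\frac{\sqrt s}{\beta\overline\sigma^2}\int_0^{\overline\sigma^2/c_1}\frac{dv}{\sqrt{2\pi v}}
 =\frac{\sqrt s}{\beta\overline\sigma^2}\cdot\frac{1}{\sqrt{2\pi}}\Bigl[\,2\sqrt v\,\Bigr]_{0}^{\overline\sigma^2/c_1}
 =\frac{2\sqrt s}{\beta\overline\sigma\sqrt{c_1}\sqrt{2\pi}},
\end{equation*}
where we used $\overline\sigma>0$ and $c_1>0$ so that the square roots are well defined. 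The right-hand side is exactly $g(s)$, so $\hat w\cdot B(h(\cdot))=\hat w\cdot B(g(\cdot))$, and the claimed weak convergence in $D[0,\infty)$ with the Skorohod topology is immediate from Theorem \ref{theorem}.

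There is no genuine obstacle in this argument; all of the work is contained in Theorem \ref{theorem}. The only small points to keep in mind are the elementary evaluation $\int_0^a v^{-1/2}\,dv=2\sqrt a$ and the positivity of the constants $\overline\sigma$ and $c_1$ supplied by their definitions in (\ref{sigma}) and (\ref{c1}), which make the simplification of $h$ to $g$ legitimate.
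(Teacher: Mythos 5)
Your proof is correct and is exactly the intended argument: the Corollary is the $N=1$, $r_1=0$, $\theta_1=1$ specialization of Theorem \ref{theorem}, and the only computation needed is the evaluation of $h$ in (\ref{h}) under that specialization, which you carry out accurately to obtain $g$. The paper leaves this step implicit, so there is nothing further to compare.
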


\bigskip

\section{Proof of Theorem 2.1}

We begin with a variation of the well known Martingale Functional Central Limit Theorem whose proof is deferred to the Appendix.
\bigskip
\begin{lemma}
 \label{MCLT}
 Let $ \{X_{n,m}, \mathcal{F}_{n,m}, 1 \leq m \leq n \}$ be an $\mathbb{R}^d$ -valued square integrable martingale difference array on a probability space $(\Omega, \mathcal{F} , P )$. Let $\Gamma$ be a symmetric, non-negative definite $d \times d$ matrix. 
Let $h(s)$ be an increasing $\alpha$-H\"older continuous function on $[0,1]$ with $h(0)=0$ and $h(1)=\gamma >0$.
Define $S_n(s)= \sum_{k=1}^{[ns]}X_{n,k} $. 
Assume that
\begin{eqnarray}
\label{cond1}
\lim \limits_{n \to \infty} \sum_{k=1} ^ {[ns]} E\Big{(}X_{n,k} X_{n,k}^T \Big{\vert} \mathcal{F}_{n,k-1}\Big{)} =   h(s) \Gamma \mbox{   in probability,  } 
\end{eqnarray}
for each $0 \leq s \leq 1$, and 
\begin{eqnarray}
\label{cond2}
\lim \limits_{ n \to \infty} \sum_{k=1}^{n} E \Big{(} |X_{n,k}|^2 \mathbb{I}\{|X_{n,k}| \geq \epsilon\} \Big{\vert} \mathcal{F}_{n,k-1} \Big{)} = 0  \mbox {   in probability,   }  
\end{eqnarray}
for each $\epsilon  > 0$. Then $S_n(\cdot)$ converges weakly to the process $\Xi(\cdot)$ on the space $D[0,1]$ with the Skorohod topology. Here $\Xi(s)=B(h(s))$ where $B(\cdot)$ is a Brownian motion with diffusion matrix $\Gamma$. 
\end{lemma}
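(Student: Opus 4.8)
The plan is to follow the classical two-step route for functional limit theorems: first prove convergence of the finite-dimensional distributions, then prove tightness in $D[0,1]$, and finally identify the limit using continuity of $B(h(\cdot))$. One may assume $\Gamma\neq 0$, since otherwise $S_n(\cdot)\to 0$ uniformly in probability and there is nothing to show. For the finite-dimensional distributions I would fix $0=s_0<s_1<\cdots<s_p\le 1$ and vectors $\lambda_1,\dots,\lambda_p\in\mathbb{R}^d$ and invoke the Cram\'er--Wold device, so that it suffices to prove $\sum_j\lambda_j\cdot S_n(s_j)\Rightarrow\sum_j\lambda_j\cdot B(h(s_j))$. Summation by parts rewrites the left side as $\sum_{k=1}^{[ns_p]}Y_{n,k}$, where $Y_{n,k}=\mu_i\cdot X_{n,k}$ for $[ns_{i-1}]<k\le[ns_i]$ and $\mu_i=\sum_{j\ge i}\lambda_j$; this is a scalar square-integrable martingale difference array for $(\mathcal F_{n,k})$. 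Writing $V_n(s)=\sum_{k\le[ns]}E(X_{n,k}X_{n,k}^T\mid\mathcal F_{n,k-1})$, its conditional variance is $\sum_i\mu_i^T(V_n(s_i)-V_n(s_{i-1}))\mu_i$, which by (\ref{cond1}) converges in probability to the constant $\sigma^2:=\sum_i(h(s_i)-h(s_{i-1}))\,\mu_i^T\Gamma\mu_i$, while its Lindeberg sum is dominated by $(\max_i|\mu_i|)^2$ times the quantity in (\ref{cond2}) (with $\epsilon$ rescaled), hence tends to $0$ in probability. The scalar martingale central limit theorem then gives $\sum_kY_{n,k}\Rightarrow N(0,\sigma^2)$, and since the increments of $B\circ h$ over the intervals $[s_{i-1},s_i]$ are independent Gaussians with covariance $(h(s_i)-h(s_{i-1}))\Gamma$, the number $\sigma^2$ is exactly the variance of $\sum_j\lambda_j\cdot B(h(s_j))$, which is what is needed.

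For tightness — the main part of the argument — I would first truncate. Fix $\epsilon>0$, set $\bar X_{n,k}=X_{n,k}\ind\{|X_{n,k}|\le\epsilon\}-E(X_{n,k}\ind\{|X_{n,k}|\le\epsilon\}\mid\mathcal F_{n,k-1})$ and $\bar S_n(s)=\sum_{k\le[ns]}\bar X_{n,k}$, so that $|\bar X_{n,k}|\le 2\epsilon$. The complementary martingale $S_n-\bar S_n$ has conditional quadratic variation dominated by the expression in (\ref{cond2}); localizing at the first time this quadratic variation exceeds a small level and applying Doob's inequality shows $\sup_s|S_n(s)-\bar S_n(s)|\to 0$ in probability (here one uses that (\ref{cond2}) gives convergence to $0$, not mere boundedness), so it suffices to prove tightness of $\{\bar S_n\}$ in $D[0,1]$. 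Put $\bar V_n(s)=\sum_{k\le[ns]}E(\bar X_{n,k}\bar X_{n,k}^T\mid\mathcal F_{n,k-1})$. A short computation gives $\mathrm{tr}\,\bar V_n(s)\le\mathrm{tr}\,V_n(s)$ with a difference that is, uniformly in $s$, negligible in probability, so $\mathrm{tr}\,\bar V_n(s)\to\mathrm{tr}(\Gamma)\,h(s)$ in probability for each $s$. Since $\mathrm{tr}\,\bar V_n$ is nondecreasing in $s$ and the limit $\mathrm{tr}(\Gamma)h$ is continuous, a P\'olya-type argument upgrades this to $R_n:=\sup_s|\mathrm{tr}\,\bar V_n(s)-\mathrm{tr}(\Gamma)h(s)|\to 0$ in probability.

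With this in hand I would verify Aldous's tightness criterion for $\{\bar S_n\}$: tightness of each marginal $\bar S_n(t)$ is immediate from the finite-dimensional convergence, and for stopping times $\tau_n\le 1$ and $\delta_n\downarrow 0$ one stops $\bar S_n$ at $\sigma_n:=\inf\{s:\mathrm{tr}\,\bar V_n(s)>\mathrm{tr}(\Gamma)h(1)+1\}$, which is a (predictable) stopping time with $P(\sigma_n\le 1)\to 0$ and $\sup_s\mathrm{tr}\,\bar V_n(s\wedge\sigma_n)\le\mathrm{tr}(\Gamma)h(1)+1+4\epsilon^2$, so the stopped martingales are uniformly $L^2$-bounded. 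Optional stopping and Chebyshev's inequality then bound $P(|\bar S_n((\tau_n+\delta_n)\wedge 1)-\bar S_n(\tau_n)|>\eta)$ by $P(\sigma_n\le 1)+\eta^{-2}E[\mathrm{tr}\,\bar V_n(u_2)-\mathrm{tr}\,\bar V_n(u_1)]$ for the bounded times $u_1=\tau_n\wedge\sigma_n\le u_2=(\tau_n+\delta_n)\wedge 1\wedge\sigma_n$ with $u_2-u_1\le\delta_n$; the last increment is at most $\mathrm{tr}(\Gamma)(h(u_2)-h(u_1))+2R_n\le C\delta_n^{\alpha}+2R_n$ by the $\alpha$-H\"older continuity of $h$, and hence vanishes. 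This gives tightness of $\{\bar S_n\}$, and therefore of $\{S_n\}$. To finish, every subsequential weak limit of $\{S_n\}$ has, by the first step, the finite-dimensional distributions of $B(h(\cdot))$; since the Borel $\sigma$-field of $D[0,1]$ is generated by the coordinate maps, the subsequential limit is unique and equals $\mathrm{Law}(B\circ h)$, which is supported on $C[0,1]$ because $h$ is continuous, so $S_n(\cdot)\Rightarrow B(h(\cdot))$.

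The step I expect to be the real obstacle is the tightness argument. Hypotheses (\ref{cond1})--(\ref{cond2}) control the conditional second moments only in probability, not in $L^1$, so the martingales $\bar S_n$ are not a priori uniformly $L^2$-bounded; the truncation (to obtain bounded increments) together with the localization at $\sigma_n$ is precisely what restores enough integrability to run the moment estimate behind Aldous's criterion. The $\alpha$-H\"older continuity of $h$ is used exactly there, to convert the vanishing time increment $\delta_n$ into a deterministic bound $C\delta_n^{\alpha}$ on the increment of the limiting clock $\mathrm{tr}(\Gamma)h$.
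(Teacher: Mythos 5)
Your argument is correct, but it follows a genuinely different path from the paper's. The paper proves the scalar case by adapting Durrett's Skorokhod-embedding proof of the martingale FCLT: it represents the partial sums as a Brownian motion run at random times $\tau^n_m$, proves a modified version of Durrett's Lemma 6.8 (the paper's Lemma~\ref{lemmaMCLT}) showing that $\tau^n_{[ns]}\to h(s)$ forces $B(\tau^n_{[ns]})\to B(h(s))$ uniformly, and then modifies Durrett's Theorems 7.2--7.4 by replacing the clock $t\sigma^2$ by $h(t)\sigma^2$; the vector case is then reduced to the scalar one by invoking Theorem~3 of Rassoul-Agha--Sepp\"al\"ainen \cite{rsptrf} and supplying a continuity lemma for $B\circ h$ (the paper's Lemma~\ref{ctsversion}). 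You instead take the classical two-step route: convergence of finite-dimensional distributions via Cram\'er--Wold and the scalar MCLT, and tightness via truncation, localization at the predictable time $\sigma_n$, a P\'olya-type upgrade of the pointwise convergence of $\mathrm{tr}\,\bar V_n$ to a uniform one, and Aldous's criterion. Your route works directly in $\mathbb{R}^d$ (no scalar-to-vector reduction is needed), avoids the Skorokhod embedding machinery, and, as you observe, the H\"older hypothesis is used precisely to bound the increment of the deterministic clock $\mathrm{tr}(\Gamma)h$; in fact your argument only needs $h$ continuous (a modulus-of-continuity bound would replace $C\delta_n^\alpha$), so it would accommodate a marginally weaker hypothesis. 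The paper's route, by contrast, yields as a byproduct the explicit coupling with a Brownian motion (the representation $S_n(\cdot)\approx B(\tau^n_{[n\cdot]})$), which is often convenient, and matches the Durrett reference used throughout the rest of the paper. Both are standard and correct; yours is more self-contained in the vector setting, while the paper's leans on existing 1-D machinery.
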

\vspace{0.3cm}
 Let $\mathcal{F}_0=\lbrace \emptyset,\Omega \rbrace$ and $\mathcal{F}_k=\sigma\Big{\{}\bar{\omega}_j : j \le k-1\Big{\}}$ where $\bar{\omega}_j = \lbrace \omega_{x \cdot} : x\cdot e_2=j \rbrace$. $\mathcal{F}_k$ thus denotes the part of the environment strictly below level $k$ (level $k$ here denotes all points $\{x: x\cdot e_2=k\}$). Notice that for all $x$ and for each $i \in \{1,2,\cdots,N\}$, $P_{r_i\sqrt{n}}^{\omega}(X_{\lambda_{k}}=x)$ is $\mathcal{F}_k$ measurable and hence also is $E_{r_i \sqrt{n}}^{\omega}(X_{\lambda_{k}})$. Let $\{X \mbox{ hits level }k \}$ be the event $\{X_{\lambda_k} \cdot e_2=k \}$. Now
\begin{eqnarray*}
 \mathbb{E}\Big{[} E_{r_i\sqrt{n}}^{\omega}(X_{\lambda_k})-E_{r_i\sqrt{n}}^{\omega}(X_{\lambda_{k-1}})
   \Big{\vert} \mathcal{F}_{k-1} \Big{]}
&=&\mathbb{E} \Big{[} \sum_{x \cdot e_2 =k-1}D(x,\omega) P_{r_i\sqrt{n}}^{\omega}(X_{\lambda_{k-1}}=x) \Big{\vert} \mathcal{F}_{k-1} \Big{]}\\
&=&\mathbb{E}D \cdot P_{r_i\sqrt{n}}^{\omega}(X \mbox{ hits level } k-1).  
\end{eqnarray*}
Let 
\begin{eqnarray}
\label{mart}
M_k^{n,i}=E_{r_i \sqrt{n}}^{\omega}(X_{\lambda_k}) -([r_i\sqrt{n}],0)^T - \mathbb{E}D \cdot
\sum_{l=1}^k P_{r_i \sqrt{n}}^{\omega}(X \mbox{ hits level } l-1 ) . 
\end{eqnarray}
The above computation tells us $\big{\{}n^{-\frac{1}{4}}\sum_{i=1}^N \theta_i (M_k^{n,i} - M_{k-1}^{n,i}), 1 \leq k \leq n \big{\}} $  is a martingale difference array with respect to $\mathcal{F}_{n,k} = \mathcal{F}_k = \sigma\lbrace\bar  {\omega}_j : j \le k-1\rbrace$. We will now check the conditions of Lemma \ref{MCLT} for $X_{n,k}=n^{-\frac{1}{4}}\sum_{i=1}^N \theta_i (M_k^{n,i} - M_{k-1}^{n,i})$ using the function $h$ in (\ref{h}). The second condition is trivial to check since the difference $M_k^{n,i}-M_{k-1}^{n,i}$ is bounded.

The main work in the paper is checking the condition (\ref{cond1}) in the above lemma. First note that
\begin{eqnarray*}
 M_k^{n,i}-M_{k-1}^{n,i}
&=& E_{r_i\sqrt{n}}^{\omega}( X_{\lambda_k})- E_{r_i\sqrt{n}}^{\omega}( X_{\lambda_{k-1}})- \mathbb{E}D \cdot P_{r_i\sqrt{n}}^{\omega}(X \mbox{ hits level } k-1) \\
&=& \sum_{x \cdot e_2 = k-1} D(x,\omega) P_{r_i\sqrt{n}}^{\omega}(X_{\lambda_{k-1}}=x) -\mathbb{E}D \cdot P_{r_i\sqrt{n}}^{\omega}(X \mbox{ hits level } k-1) \\
&=& \sum_{x \cdot e_2 = k-1} P_{r_i\sqrt{n}}^{\omega} (X_{\lambda_{k-1}} = x) \big{\lbrace}D(x,\omega) - \mathbb{E}D \big{\rbrace}.
\end{eqnarray*}
Using the fact that $\mathbb{E}\Big{[} D(x,\omega)- \mathbb{E}D \Big{]}=0$ and that $D(x,\omega)- \mathbb{E}D$ is independent of $\mathcal{F}_{k-1}$, we get
\[ \sum_{k=1}^{[ns]} \mathbb{E}\Big{[} \Big{\lbrace} \sum_{i=1}^N\theta_i\Big{(}\frac{M_k^{n,i}-M_{k-1}^{n,i}  }{n^{\frac{1}{4}} } \Big{)}\sum_{j=1}^N \theta_j \Big{(} \frac{M_k^{n,j}-M_{k-1}^{n,j} }{n^{\frac{1}{4}} }\Big{)}^T\Big{\rbrace}  \Big{\vert} \mathcal{F}_{k-1}\Big{]}\]
\[ \qquad \qquad = \frac{1}{\sqrt{n}}\sum_{i=1}^N\sum_{j=1}^N \theta_i \theta_j \sum_{k=1}^{[ns]} \mathbb{E} \Big{[}\Big{(} M_k^{n,i}-M_{k-1}^{n,i }  \Big{)}\Big{(} M_k^{n,j}-M_{k-1}^{n,j } \Big{)}^T \Big{\vert} \mathcal{F}_{k-1}\Big{]}\]
\begin{eqnarray}
\label{eqn}
 \hspace{1cm} = \frac{\Gamma}{\sqrt{n}}\sum_{i=1}^N\sum_{j=1}^N \theta_i \theta_j \sum_{k=1}^{[ns]} P^{\omega}_{r_i \sqrt{n},\, r_j \sqrt{n}}
 (X_{\lambda_{k-1}}= \tilde{X}_{\tilde{\lambda}_{k-1}} \mbox{ and both walks hit level }k-1). 
 \end{eqnarray}
Here $\Gamma= \mathbb{E}(DD^T)-\mathbb{E}(D)\mathbb{E}(D)^{T}$ and $X$, $\tilde{X}$ are independent walks in the same environment starting at $([r_i\sqrt{n} ],0)$ and $([r_j \sqrt{n} ],0)$ respectively. We will later show that the above quantity converges in $\mathbb{P}$-probability as $n \rightarrow \infty$ to $h(s)\Gamma $ where $h(s)$ is the function in (\ref{h}). We will also show that $h$ is increasing and H\"older continuous. Lemma \ref{MCLT} thus gives us
\begin{eqnarray}
 \label{star}
 \sum_{i=1}^N \theta_i \frac{ M^{n,i}_{[n\cdot]}}{n^{\frac{1}{4}}} \Rightarrow B(h(\cdot)).
 \end{eqnarray}
 From (\ref{star}) we can complete the proof of Theorem \ref{theorem} as follows. Recalling the definiton in (\ref{xi}),
\[ \xi_n(s,r_i) = E_{r_i\sqrt{n}}^{\omega}(X_{\lambda_{[ns]}}\cdot e_1) - E_{r_i\sqrt{n}}(X_{\lambda_{[ns]}}\cdot e_1)\]
and let
\[ \zeta_n(s,r_i)= E_{r_i\sqrt{n}}^{\omega}(U_{[ns]-1}) - E_{r_i \sqrt{n}}(U_{[ns]-1}) .\]
Here $U_n=\vert \lbrace k : X_k \cdot e_2 \le n \rbrace \vert $ is the number of levels hit by the walk up to level $n$.
Since  from equation (\ref{mart}),
\[E_{r_i\sqrt{n}}(X_{\lambda_{[ns]}}\cdot e_1)= [r_i \sqrt{n}] +\mathbb{E}(D \cdot e_1)E_{r_i \sqrt{n}}(U_{[ns]-1})\]
 we have
\begin{eqnarray*}
M_{[ns]}^{n,i} \cdot e_1 &= &E_{r_i \sqrt{n}}^{\omega}(X_{\lambda_{[ns]}}\cdot e_1) - [r_i \sqrt{n}]- \mathbb{E}(D \cdot e_1) E_{r_i \sqrt{n}}^{\omega}(U_{[ns]-1}) \\
&=& E_{r_i \sqrt{n}}^{\omega}(X_{\lambda_{[ns]}}\cdot e_1) - E_{r_i \sqrt{n}}(X_{\lambda_{[ns]}}\cdot e_1)- \mathbb{E}(D \cdot e_1) \big{[}E_{r_i \sqrt{n}}^{\omega}(U_{[ns]-1}) -E_{r_i \sqrt{n}}(U_{[ns]-1})\big{]} \\
&=&\xi_n(s,r_i) - \mathbb{E}(D \cdot e_1) \zeta_n(s,r_i).
\end{eqnarray*}
$0\le X_{\lambda_{[ns]}}\cdot e_2 - [ns] \le K$ gives us $\vert E_{r_i \sqrt{n}}^{\omega} (X_{\lambda_{[ns]}} \cdot e_2)-E_{r_i \sqrt{n}} (X_{\lambda_{[ns]}} \cdot e_2)\vert \leq K$. Now
\begin{eqnarray*}
M_{[ns]}^{n,i} \cdot e_2&=& E^{\omega}_{r_i \sqrt{n}}(X_{\lambda_{[ns]}} \cdot e_2)-\mathbb{E}(D \cdot e_2) E_{r_i \sqrt{n}}^{\omega}(U_{[ns]-1}) \\
&=& E^{\omega}_{r_i \sqrt{n}}(X_{\lambda_{[ns]}} \cdot e_2)- E_{r_i \sqrt{n}}(X_{\lambda_{[ns]}} \cdot e_2)- \mathbb{E}(D \cdot e_2)\big{[} E_{r_i \sqrt{n}}^{\omega}(U_{[ns]-1})- E_{r_i \sqrt{n}}(U_{[ns]-1}) \big{]} \\
&=& O(1) - \mathbb{E}(D \cdot e_2) \zeta_n(s,r_i).
\end{eqnarray*}
Thus 
\[\zeta_n(s,r_i)= O(1) - \frac{M_{[ns]}^{n,i} \cdot e_2}{\mathbb{E}(D \cdot e_2)} \]
and
\[ \xi_n(s,r_i)= M_{[ns]}^{n,i} \cdot e_1+\mathbb{E}(D \cdot e_1) \zeta_n(s,r_i)= M_{[ns]}^{n,i} \cdot e_1 - \frac{\mathbb{E}(D \cdot e_1)}{\mathbb{E}(D \cdot e_2)} M_{[ns]}^{n,i} \cdot e_2 +O(1). \]
%Denoting $\hat{w}=(1,-\frac{\mathbb{E}(D\cdot e_1)}{\mathbb{E}(D \cdot e_2)}$,
So (recall the definiton of $\hat{w}$ in (\ref{w})),
\begin{eqnarray*}
 \sum_{i=1}^N \theta_i\frac{\xi_{n}(\cdot,r_i) } {n^{\frac{1}{4}}} &=& n^{- \frac{1}{4}} \sum_{i=1}^N \theta_i \Big{[}E_{r_i \sqrt{n}}^{\omega} (X_{\lambda_{[n\cdot]}} \cdot e_1)-E_{r_i \sqrt{n}} (X_{\lambda_{[n\cdot]}} \cdot e_1)\Big{]}\\
 &=& \hat{w} \cdot \sum_{i=1}^N \theta_i  \frac{M_{[n\cdot]}^{n,i}}{n^{\frac{1}{4}}} +O(n^{-\frac{1}{4}}) \\
&\Rightarrow & \hat{w} \cdot B\big{(}h(\cdot)\big{)}.  
 \end{eqnarray*}
Returning to equation (\ref{eqn}), the proof of Theorem \ref{theorem} will be complete if we show 
\[ \frac{1}{\sqrt{n}} \sum_{k=1}^{n}P^{\omega}_{r_i \sqrt{n},\,r_j \sqrt{n}}
 (X_{\lambda_{k-1}}= \tilde{X}_{\tilde{\lambda}_{k-1}} \mbox{ on level }k-1) \longrightarrow 
 \frac{1}{\beta \overline{\sigma}^2}\int_0^{\frac{\overline{\sigma}^2}{c_1}} \frac{1}{\sqrt{2\pi v}} \exp\big( -\frac{(r_i-r_j)^2}{2v} \big) dv \]
 %\frac{2 }{\beta \sigma \sqrt{c_1}\sqrt{2\pi}} \exp\big{(} - \frac{(r_i-r_j)^2}{2\sigma^2}\big{)} \]
 as $ n \to \infty$. We will first show the averaged statement (with $\omega$ integrated away)

\bigskip

\begin{proposition} 
\label{STEP 1}
\[\frac{1}{\sqrt{n}} \sum_{k=1}^{n}P_{r_i \sqrt{n},\,r_j \sqrt{n}}
(X_{\lambda_{k-1}}= \tilde{X}_{\tilde{\lambda}_{k-1}} \mbox{ on level } k-1)\longrightarrow 
\frac{1}{\beta \overline{\sigma}^2}\int_0^{\frac{\overline{\sigma}^2}{c_1}} \frac{1}{\sqrt{2\pi v}} \exp\big( -\frac{(r_i-r_j)^2}{2v} \big) dv\]
%\frac{2 }{\beta \sigma \sqrt{c_1}\sqrt{2\pi}} \exp\big{(} - \frac{(r_i-r_j)^2}{2\sigma^2}\big{)} \]
\end{proposition}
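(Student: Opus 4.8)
The plan is to read the sum over $k$ as the expected number of levels at which the two walks occupy a common site, and to evaluate this through a local limit theorem for a single walk together with a renewal argument for the pair.

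\emph{Reformulation and the single–walk structure.} Since the walk increases its $e_2$-coordinate at every step, it visits each level at most once and each site at most once. The first consequence is that $\{X_{\lambda_{k-1}}=\tilde X_{\tilde\lambda_{k-1}}\text{ on level }k-1\}$ is exactly the event that $X$ and $\tilde X$ share a common site at level $k-1$, so that
\[\frac1{\sqrt n}\sum_{k=1}^{n}P_{r_i\sqrt n,\,r_j\sqrt n}\!\big(X_{\lambda_{k-1}}=\tilde X_{\tilde\lambda_{k-1}}\ \text{on level }k-1\big)=\frac1{\sqrt n}\sum_{0\le z\cdot e_2\le n-1}\!\!P_{r_i\sqrt n,\,r_j\sqrt n}\!\big(X,\tilde X\ \text{both visit }z\big),\]
i.e.\ $n^{-1/2}$ times the $P_{r_i\sqrt n,r_j\sqrt n}$-expected number of common sites below level $n$. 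The second consequence is that, because $\mathbb P$ is i.i.d., the path probability of a single walk factors site by site: under $P_u$ the walk $X$ is the homogeneous random walk with step distribution $z\mapsto\mathbb E\,\omega_{0,z}$ (supported on $\{-K,\dots,K\}\times\{1,\dots,K\}$), and we write $g$ for its Green's function, which — there being no revisits — also equals the probability of ever visiting $z$.

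\emph{Reduction to independent walks; renewal at common sites.} For the pair, the same factorization shows that under $P_{u,\tilde u}$ the two walks evolve as two independent copies of this homogeneous walk up to and including their first common site: the two path segments ending at that site are otherwise disjoint, so the relevant expectation of a product of transition probabilities factors. The only dependence is that at each common site $w$ the two walks take one step each in the shared $\omega_{w,\cdot}$. Decomposing the expected number of common sites according to the first one gives a renewal equation driven by the meeting structure of two independent copies of the homogeneous walk; its solution has leading term $\sum_z g(z-u)g(z-\tilde u)$ over the relevant range of $z$, and one checks that the correction produced by the shared environment at common sites enters the limit only through the constant $\beta$ of (\ref{beta}) (equivalently, does not affect the diffusive scaling). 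Irreducibility of the auxiliary chains $\overline q$ and $Z_k$ is what makes this renewal/Markov analysis run.

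\emph{Local limit theorem and Riemann sum.} It then remains to evaluate $n^{-1/2}\sum_{0\le z\cdot e_2\le n-1}g(z-u)g(z-\tilde u)$ with $u=([r_i\sqrt n],0)$, $\tilde u=([r_j\sqrt n],0)$. The walk has vertical speed $\mathbb E(D\cdot e_2)$, so reaching level $\ell$ takes $\approx\ell/\mathbb E(D\cdot e_2)$ steps; in the direction $\hat w$ of (\ref{w}) the horizontal displacement has a well-defined asymptotic per-level variance $\overline\sigma^2$ (cf.\ (\ref{sigma})), and the overshoot structure at level crossings supplies the constant $c_1$ of (\ref{c1}). A uniform local limit theorem for this level-subordinated walk gives $g(a,\ell)\approx\text{(const)}\,\ell^{-1/2}\exp(-\text{(const)}\,a^2/\ell)$, uniformly for $\ell\le n$ and $|a|=O(\sqrt n)$, with the constants governed by $\overline\sigma$, $c_1$, $\beta$. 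Plugging this in, the inner sum over the horizontal coordinate $a$ at a fixed level $\ell=k-1$ is a Gaussian sum: completing the square, the cross term contributes $\exp(-(r_i-r_j)^2 n/(\text{const}\cdot k))$ and the remaining sum becomes a Gaussian integral. The outer sum $n^{-1/2}\sum_{k\le n}(\cdots)$ is then a Riemann sum that converges, after a suitable change of variables $v\leftrightarrow k/n$, to $\frac{1}{\beta\overline\sigma^2}\int_0^{\overline\sigma^2/c_1}\frac1{\sqrt{2\pi v}}\exp\!\big(-\frac{(r_i-r_j)^2}{2v}\big)\,dv$. Note that $r_i\sqrt n$, $r_j\sqrt n$ being of order $\sqrt n$ keeps all relevant displacements within $O(1)$ standard deviations, so only a bulk local limit theorem (plus crude tail bounds to dominate the $k$-sum) is needed, not moderate deviations.

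\emph{Main obstacle.} The technical heart is the uniform local limit theorem for the level-subordinated walk, with an error term strong enough to survive both the sum over the horizontal coordinate and the sum over $k\le n$: this requires controlling the overshoot/renewal structure of the $e_2$-coordinate and obtaining the Green's-function asymptotics uniformly in $\ell$ up to $n$. Running alongside it is the bookkeeping needed to show that the dependence introduced by the common environment at shared sites is captured by $\beta$ (or is asymptotically negligible) — the renewal decomposition above. I expect the uniform local limit theorem, together with keeping track of the constants $\beta,\overline\sigma,c_1$ so that they match (\ref{beta})–(\ref{c1}), to be the main effort.
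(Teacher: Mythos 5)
Your reformulation of the sum as the expected number of levels at which the two walks share a site is correct (each level is visited at most once because steps go up by at least one). But the route you then take — a uniform local limit theorem for the two-dimensional Green function $g$ of a single homogeneous walk, plugged into $\sum_{z\cdot e_2\le n-1} g(z-u)g(z-\tilde u)$, plus a renewal decomposition at the first common site — is not what the paper does and, as sketched, it leaves the hard part undone. The paper collapses the problem to a one-dimensional object: $Y_k = X_{\lambda_{L_k}}\cdot e_1 - \tilde X_{\tilde\lambda_{L_k}}\cdot e_1$ is, under the averaged measure, a $\mathbb{Z}$-valued random walk with kernel $q$ that agrees with an unperturbed kernel $\overline q$ except at the origin. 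All three constants are then renewal/potential-kernel quantities of that 1D chain: $c_1 = E_{(0,0),(1,0)}(L_1)$ is the mean spacing of common levels of the pair, $\overline\sigma^2$ is the variance of the $\overline q$ step, and $\beta = \sum_x q(0,x)\overline a(x)$ with $\overline a$ the Spitzer potential kernel.

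Two concrete problems with your sketch. First, you misidentify $\overline\sigma^2$: you describe it as the ``per-level horizontal variance in direction $\hat w$'' of a single walk, but in the paper it is the variance of the horizontal gap between two independent walks taken at consecutive common levels; since each walk can skip levels, this aggregates both walks' fluctuations over a random number of levels, and is not a single-walk per-level quantity. Until you prove that the constant produced by your single-walk LLT convolution equals $\overline\sigma^2/c_1$ in the paper's sense, your Riemann sum does not match the stated limit. Second, the assertion that ``the correction produced by the shared environment at common sites enters the limit only through $\beta$'' is exactly the crux, not a bookkeeping remark: in the paper it is proved via the identity $\beta G_{n-1}(0,0)=E_{0,0}[\overline a(Y_n)]$, a martingale CLT and uniform integrability argument giving $E_{0,0}|Y_n|/\sqrt n\to 2\overline\sigma/\sqrt{2\pi}$, and the uniform comparison $\sup_x|\beta G_n(x,0)-\overline G_n(x,0)|/\sqrt n\to 0$. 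Your renewal-at-first-meeting decomposition would have to be pushed to a quantitative Green-function comparison of this strength, and that work is absent. Finally, the ``crude tail bounds'' you invoke to handle the cutoff $L_k\le n-1$ hide the large-deviation estimate (Lemma~\ref{LDP}), which itself rests on a nontrivial argument that the meeting-level increments $T_i$ have infinite mean (via (\ref{intup})); this too needs to be supplied.
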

 and then show that the difference of the two vanishes: 

\bigskip

\begin{proposition}
\label{STEP 2} 
 \[\frac{1}{\sqrt{n}} \sum_{k=1}^{n}P^{\omega}_{r_i \sqrt{n},\,r_j \sqrt{n}}
(X_{\lambda_{k-1}}= \tilde{X}_{\tilde{\lambda}_{k-1}} \mbox{ on level }k-1) \hspace{8cm} \]
\[\qquad \qquad \qquad - \mbox{ } \frac{1}{\sqrt{n}} \sum_{k=1}^{n}P_{r_i \sqrt{n},\,r_j \sqrt{n}}
(X_{\lambda_{k-1}}= \tilde{X}_{\tilde{\lambda}_{k-1}} \mbox{ on level }k-1) \longrightarrow 0 \,\,\, \mbox{in}\,\, \mathbb{P}\mbox{-probability}.\]
\end{proposition}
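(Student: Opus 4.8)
The plan is to prove that
\[
Q_n:=\frac{1}{\sqrt n}\sum_{k=1}^{n}p_k^{\omega},\qquad
p_k^{\omega}:=P^{\omega}_{r_i\sqrt n,\,r_j\sqrt n}\big(X_{\lambda_{k-1}}=\tilde X_{\tilde\lambda_{k-1}}\text{ on level }k-1\big),
\]
concentrates around its $\mathbb P$-mean; since that mean is exactly the quantity treated in Proposition~\ref{STEP 1}, it then suffices, by Chebyshev's inequality, to show $\operatorname{Var}_{\mathbb P}(Q_n)\to0$, i.e.\ $\operatorname{Var}_{\mathbb P}\!\big(\sum_{k=1}^{n}p_k^{\omega}\big)=o(n)$. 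Two structural facts drive the argument. First, because the meeting is required to occur \emph{exactly on} level $k-1$ and the walk never steps downward, $p_k^{\omega}$ is $\mathcal F_{k-1}$-measurable (cf.\ the measurability remark preceding \eqref{mart}). Second, writing $H^{r}_{\ell}(a)=P^{\omega}_{r\sqrt n}(X_{\lambda_\ell}=a)$ for the quenched law of the first entrance onto level $\ge\ell$ --- an $\mathcal F_\ell$-measurable object --- the Markov property at level $\ell$ gives, for $k\ge\ell+2$,
\[
\mathbb E\big[p_k^{\omega}\mid\mathcal F_{\ell+1}\big]=\sum_{a,b}H^{r_i}_{\ell}(a)\,H^{r_j}_{\ell}(b)\,\rho_k(a,b;\bar\omega_\ell),
\]
where $\rho_k(a,b;\bar\omega_\ell)$ is the collision probability at level $k-1$ of the two continuation walks from $a$ and $b$, with the environment strictly above level $\ell$ integrated out. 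The crux is that a walk sitting at a site $a$ with $a\cdot e_2=\ell$ uses only the one kernel $\omega_{a\cdot}$ before leaving level $\ell$ forever; hence $\rho_k(a,b;\bar\omega_\ell)$ depends on $\bar\omega_\ell$ \emph{only} through $\omega_{a\cdot}$ and $\omega_{b\cdot}$, and not at all on $\bar\omega_\ell$ if both $a$ and $b$ lie strictly above level $\ell$.

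First I would run the Doob martingale decomposition of $\sum_k p_k^{\omega}$ along $(\mathcal F_\ell)$: writing $\Delta_\ell=\sum_{k\ge\ell+2}\big(\mathbb E[p_k^{\omega}\mid\mathcal F_{\ell+1}]-\mathbb E[p_k^{\omega}\mid\mathcal F_\ell]\big)$ one has $\sum_k p_k^{\omega}-\mathbb E\sum_k p_k^{\omega}=\sum_{\ell<n}\Delta_\ell$ and, by orthogonality of increments, $\operatorname{Var}_{\mathbb P}(\sum_k p_k^{\omega})=\sum_{\ell<n}\mathbb E[\Delta_\ell^2]$. Subtracting from the display above its $\bar\omega_\ell$-average gives $\Delta_\ell=\sum_{a,b}H^{r_i}_{\ell}(a)H^{r_j}_{\ell}(b)\,G_\ell(a,b)$, where
\[
G_\ell(a,b):=\sum_{k\ge\ell+2}\Big(\rho_k(a,b;\bar\omega_\ell)-\mathbb E_{\bar\omega_\ell}\rho_k(a,b)\Big)
\]
is, conditionally on $\mathcal F_\ell$, a mean-zero function of $(\omega_{a\cdot},\omega_{b\cdot})$, identically zero unless $a\cdot e_2=\ell$ or $b\cdot e_2=\ell$. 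Expanding $\mathbb E[\Delta_\ell^2]$ over quadruples $(a,b,a',b')$ and invoking the i.i.d.\ assumption~\ref{model}(i), the factor $\mathbb E\big[G_\ell(a,b)G_\ell(a',b')\big]$ vanishes unless $\{a,b\}\cap\{a',b'\}\neq\emptyset$. This collapses the quadruple sum to a single-pair sum; bounding $\big|\mathbb E[G_\ell(a,b)G_\ell(a',b')]\big|\le g_n^2$ with
\[
g_n:=\sum_{k}\ \sup_{a,b,\bar\omega_\ell}\big|\rho_k(a,b;\bar\omega_\ell)-\mathbb E_{\bar\omega_\ell}\rho_k(a,b)\big|,
\]
using that the entrance laws have total mass $1$, and treating the mixed overlaps by Cauchy--Schwarz, one is left with
\[
\mathbb E[\Delta_\ell^2]\ \le\ C\,g_n^{2}\,\mathbb E_{\mathbb P}\Big[\sum_a\big(H^{r_i}_{\ell}(a)\big)^2\Big]
\ =\ C\,g_n^{2}\,P_{0,0}\big(X_{\lambda_\ell}=\tilde X_{\tilde\lambda_\ell}\big)
\ \le\ \frac{C\,g_n^{2}}{\sqrt{\ell\vee1}},
\]
the last step being the averaged collision (local CLT / Green-function) estimate for two walks started from the same point, which belongs to the same toolbox as Proposition~\ref{STEP 1}. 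Summing over $\ell<n$ gives $\operatorname{Var}_{\mathbb P}(\sum_k p_k^{\omega})\le C g_n^{2}\sqrt n$, hence $\operatorname{Var}_{\mathbb P}(Q_n)\le C g_n^{2}/\sqrt n$.

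Thus everything reduces to showing $g_n=o(n^{1/4})$, which is the only genuinely analytic step and the main obstacle. This says that the level-$(k-1)$ collision probability of two walks is, to leading order, insensitive to which transition kernel is used at their common entrance level: one needs $\big|\rho_k(a,b;\bar\omega_\ell)-\mathbb E_{\bar\omega_\ell}\rho_k(a,b)\big|\le C\,(k-\ell)^{-3/4-\epsilon}$ for the relevant $a,b$, whereas heuristically this difference is $O\!\big((k-\ell)^{-1}\big)$, with an extra Gaussian factor $\exp\!\big(-c((a-b)\cdot e_1)^2/(k-\ell)\big)$; since the entrance laws concentrate $a$ near $r_i\sqrt n$ and $b$ near $r_j\sqrt n$, this yields $g_n=O(1)$ when $r_i\neq r_j$ and $g_n=O(\log n)$ when $r_i=r_j$. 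I would establish the required decay by a one-step coupling: couple the walk whose first step from $a$ uses $\omega_{a\cdot}$ to the walk whose first step uses an independent kernel with the same law; the two walks then agree apart from an $O(1)$ displacement created at level $\ell$, and the difference of their level-$(k-1)$ collision probabilities is bounded by the probability that this $O(1)$ discrepancy is still felt $k-\ell$ levels later, which is $O\!\big(1/(k-\ell)\big)$ by the Green-function asymptotics for the averaged walk --- the same asymptotics, together with the irreducibility of the auxiliary walk $\bar q$, that underlie the proof of Proposition~\ref{STEP 1}. A secondary, purely bookkeeping, complication is the overshoot at level $\ell$: when the walk skips level $\ell$ entirely the corresponding summand of $G_\ell$ drops out, and tracking entrance positions jointly with levels crossed requires the same bookkeeping (via $U_n$ and the auxiliary Markov chains) already present in the paper, but introduces no new ideas.
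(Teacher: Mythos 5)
Your overall scheme is the same as the paper's: you run the Doob martingale decomposition of $\sum_k p_k^{\omega}$ along the filtration $(\mathcal F_\ell)$, use orthogonality of the increments $\Delta_\ell$ to reduce the variance bound to $\sum_{\ell<n}\mathbb E[\Delta_\ell^2]=o(n)$, observe that $\Delta_\ell$ vanishes unless one of the walks hits level $\ell$ exactly, and factor the dependence on $\bar\omega_\ell$ through only the kernels at the entrance sites $a,b$. All of this matches the structure of the paper's $R_l=R_{l,1}+R_{l,2}+R_{l,2}'$ expansion and the orthogonality argument preceding Proposition~\ref{prop1}. The divergence happens at the point where you bound the increments, and that is where a genuine gap opens up.

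You write $G_\ell(a,b)=\sum_{k}\bigl(\rho_k(a,b;\bar\omega_\ell)-\mathbb E\rho_k\bigr)$ and then bound it by $g_n:=\sum_k\sup_{a,b,\bar\omega_\ell}\bigl|\rho_k(a,b;\bar\omega_\ell)-\mathbb E\rho_k\bigr|$, i.e.\ you pull the absolute value inside the sum over $k$. This forces you to prove a \emph{pointwise} decay of the sensitivity, $\bigl|\rho_k-\mathbb E\rho_k\bigr|=O\!\bigl((k-\ell)^{-1}\bigr)$, which you flag as ``the only genuinely analytic step'' but do not establish. The one-step coupling heuristic does not give it: after the coupled first steps, the difference of collision probabilities at level $k-1$ is the probability of a joint event (the two coupled copies of the $a$-walk have not yet merged \emph{and} a collision with the $b$-walk occurs at level $k-1$), which is a three-walk estimate; it would need a quantified local limit theorem for the auxiliary chain, and is substantially harder than anything the paper actually uses. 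Moving the absolute value inside the $k$-sum is exactly the wrong move: the correct observation is that $G_\ell(a,b)=\sum_{w,z}\bigl(\omega(a,w)\omega(b,z)-\mathbb E[\omega(a,w)\omega(b,z)]\bigr)\,E_{a+w,b+z}(\phi_n)$ has coefficients summing to zero, so an Abel summation over the step variables $w,z$ converts the $O(\sqrt n)$ factors $E_{a+w,b+z}(\phi_n)$ into bounded \emph{differences} $E_{a+w,b+z}(\phi_n)-E_{a+w+1,b+z}(\phi_n)$ (bounded uniformly in $n$, $a$, $b$, $w$, $z$ by Lemma~\ref{2sup}) times bounded partial sums of fluctuations, giving $\sup_{a,b,\bar\omega_\ell}|G_\ell(a,b)|\le C$ with no rate at all. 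This summation-by-parts step plus the purely qualitative Lemma~\ref{2sup} (proved by an elementary positive-recurrence comparison of Green functions of the auxiliary Markov chain $Z_k$) is the heart of the paper's argument and replaces the quantitative decay you are asking for. Once $\sup|G_\ell|\le C$, the rest of your computation closes. A minor further point: you also invoke the pointwise bound $P_{0,0}(X_{\lambda_\ell}=\tilde X_{\tilde\lambda_\ell})\le C/\sqrt{\ell\vee1}$, whereas what the paper proves (and what suffices) is only the summed bound $\sum_{\ell\le n}P_{0,0}(\cdot)\le C\sqrt n$ from (\ref{intup}); you should rephrase to use the latter rather than the pointwise decay.
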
 

\bigskip

\subsection{Proof of Proposition \ref{STEP 1}}
For simplicity of notation, let $P_{(i,j)}^n$ denote $P_{r_i \sqrt{n},r_j \sqrt{n}}$ and $E_{(i,j)}^n$ denote $E_{r_i \sqrt{n},r_j \sqrt{n}}$. Let $0=L_0<L_1< \cdots$ be the successive common levels of the independent walks (in common environment) $X$ and $\tilde{X}$; that is $L_j= \inf \{l>L_{j-1}:X_{\lambda_l}\cdot e_2=\tilde{X}_{\tilde{\lambda}_l} \cdot e_2 =l\}$. Let
\[Y_k=X_{\lambda_{L_k}}\cdot e_1-\tilde{X}_{\tilde{\lambda}_{L_k}}\cdot e_1.\]
 Now
\[
 \frac{1}{\sqrt{n}}  \sum_{k=1}^{n} P_{(i,j)}^n
 (X_{{\lambda}_{k-1}} = \tilde{X}_{\tilde{\lambda}_{k-1}}   \mbox{ on level } k-1 ) \]
\[
\qquad \, \, \, \,=\frac{1}{\sqrt{n}}  \sum_{k=1}^{n} P_{(i,j)}^n
(X_{{\lambda}_{L_{k-1}}} = \tilde{X}_{\tilde{\lambda}_{L_{k-1}} }  \mbox{ and } L_{k-1}
 \leq n-1 )  \]

\begin{eqnarray}
\label{eqn3}
\,\,\,\,=\frac{1}{\sqrt{n}}  \sum_{k=1}^{n} P_{(i,j)}^n(Y_{k-1}=0  \mbox{ and } L_{k-1} \leq n-1 ).
\end{eqnarray}

We would like to get rid of the inequality $L_{k-1} \le n-1$ in the expression above so that we have something resembling a Green's function. Denote by $\Delta L_j=L_{j+1}-L_j$. Call $L_k$ a meeting level (m.l.) if the two walks meet at that level, that is $X_{\lambda_{L_k}}=\tilde{X}_{\tilde{\lambda}_{L_k}}$. Also let $I=\{j:L_j \mbox{ is a meeting level } \}$. Let $Q_1,Q_2,\cdots $ be the consecutive $\Delta L_j$ where $ j \in I$ and $R_1,R_2,\cdots $ be the consecutive $\Delta L_j$ where $ j \notin I$. We start with a simple observation.

\bigskip

\begin{lemma}
\label{iid}
Fix $x,y \in \mathbb{Z}$. Under $P_{x,y}$,
\[ Q_1,Q_2,\cdots \mbox{ are i.i.d.  with common distribution } P_{(0,0)(0,0)}(L_1 \in \cdot) \]
\[ R_1,R_2, \cdots \mbox{ are i.i.d. with common distribution } P_{(0,0),(1,0)}(L_1 \in \cdot) \]
\end{lemma}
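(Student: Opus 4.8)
The plan is to reduce the statement to two facts about the pair of walks $(X,\tilde X)$ observed along common levels: first, that the increments $\Delta L_j = L_{j+1}-L_j$ form, after the obvious split according to whether the walks meet, two i.i.d.\ sequences; and second, that the distribution of each increment depends only on the displacement $Y_k = X_{\lambda_{L_k}}\cdot e_1 - \tilde X_{\tilde\lambda_{L_k}}\cdot e_1$ at the current common level, and in fact only on whether $Y_k = 0$ or $Y_k \ne 0$. The key input is the strong Markov property of the pair of walks applied at the stopping time $\lambda_{L_j}$ (equivalently $\tilde\lambda_{L_j}$): conditionally on $\mathcal F_{\lambda_{L_j}}$, the future of $(X,\tilde X)$ after this time is that of a fresh pair of independent walks in a fresh i.i.d.\ environment, started from $X_{\lambda_{L_j}}$ and $\tilde X_{\tilde\lambda_{L_j}}$ respectively, with both walkers sitting on the same level.

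Here is the sequence of steps I would carry out. (1) Note that since the environment is i.i.d.\ over sites and, by Assumption~\ref{model}(ii), the walks only move strictly upward in the $e_2$-direction, the environment strictly below the current common level has been ``used up'' and is independent of the environment at or above it; this is what makes the strong Markov step at $\lambda_{L_j}$ legitimate and produces a genuinely fresh environment for the future. (2) Because the walks always advance in $e_2$ and have finite range $K$, the common levels $L_0 < L_1 < \cdots$ are well-defined and a.s.\ infinite in number, and $L_j$ is a stopping time for the natural filtration of the pair of walks. (3) Apply the strong Markov property at $\lambda_{L_j}$: the law of $(\Delta L_j, \text{whether } L_{j+1} \text{ is a meeting level}, Y_{j+1} - \text{data})$ given the past depends only on the pair of starting positions $(X_{\lambda_{L_j}}, \tilde X_{\tilde\lambda_{L_j}})$ on a common level, and by translation invariance of $\mathbb P$ in the $e_1$-direction, only on $Y_j = X_{\lambda_{L_j}}\cdot e_1 - \tilde X_{\tilde\lambda_{L_j}}\cdot e_1$. (4) Moreover, whether $L_j$ is a meeting level is exactly the event $\{Y_j = 0\}$. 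Hence: if $j \in I$ (so $Y_j = 0$), $\Delta L_j$ has the law of $L_1$ under $P_{(0,0),(0,0)}$, namely $P_{(0,0),(0,0)}(L_1\in\cdot)$; and if $j\notin I$ (so $Y_j \ne 0$), one still needs that the law of $\Delta L_j$ does not depend on the particular nonzero value of $Y_j$. For the latter, observe that after the first step the displacement of two walks that do not meet is again some nonzero integer, but the \emph{level at which the next common level occurs} and the increment $\Delta L_j$ are insensitive to a translation of the whole configuration in $e_1$; comparing with the canonical representative $Y_j = 1$ gives the common law $P_{(0,0),(1,0)}(L_1 \in \cdot)$. (5) Finally, the i.i.d.\ conclusion: by induction, conditioning successively on $\mathcal F_{\lambda_{L_1}}, \mathcal F_{\lambda_{L_2}}, \ldots$, each new increment is, given the past, distributed according to one of the two fixed laws depending only on the current index being in $I$ or not; collecting the increments with $j\in I$ in order gives an i.i.d.\ sequence $Q_1, Q_2, \ldots$ with law $P_{(0,0),(0,0)}(L_1\in\cdot)$, and similarly for $R_1, R_2, \ldots$, and these are constructed from disjoint pieces of a fresh environment, so the two sequences are as claimed.

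The main obstacle, and the only point requiring genuine care, is step~(4): that the law of $\Delta L_j$ when the walks are apart does not depend on the size of the gap $Y_j$, only on the fact that it is nonzero. A priori the gap could be any integer in a bounded range, and two walks $2$ apart versus $5$ apart might take different times to realign. The resolution is that we are not asking for the walks to \emph{meet} at the next common level, only to \emph{share a level}; the displacement at the next common level is immaterial to the definition of $\Delta L_j$, and the increment $\Delta L_{j}$ as a function of the environment and the two walks is manifestly invariant under a simultaneous $e_1$-translation of everything. So translating so that one walk starts at the origin reduces any nonzero gap to the representative gap $1$ (the environment law is unchanged by this translation, being i.i.d.), whence all nonzero gaps give the single law $P_{(0,0),(1,0)}(L_1 \in \cdot)$. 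One should also check the measurability and a.s.\ finiteness of $L_1$ under these measures, which follows from Assumption~\ref{model}(iii) (ellipticity) guaranteeing that at each level the two walkers have a uniformly positive chance of landing on a common next level, so common levels recur infinitely often.
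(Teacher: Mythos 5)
Your overall plan --- strong Markov property at common levels, translation invariance, and induction --- is the right idea and runs parallel to the paper's path-sum computation (which conditions on the location of the $n$'th non-meeting common level and factors the annealed expectation over the environment strictly below level $i$ versus the environment at level $i$ and above). But the resolution you give of the step you yourself flag as the main obstacle, namely that the law of $\Delta L_j$ when $Y_j\ne 0$ does not depend on the particular nonzero value of $Y_j$, is not a correct argument.

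A simultaneous $e_1$-translation of ``everything'' (both walks and the environment) leaves the gap $Y_j$ unchanged: translating the pair of starting sites $((0,0),(5,0))$ by $-5e_1$ gives $((-5,0),(0,0))$, still a gap of $5$. It cannot ``reduce any nonzero gap to the representative gap $1$.'' What you actually need, and what is missing from your argument, is the observation that between $L_j$ and $L_{j+1}$ the two walks never occupy the same site: each walk is strictly increasing in $e_2$, so it visits each level at most once (at the crossing time $\lambda_l$), and sharing a site at a level $l$ with $L_j < l < L_{j+1}$ would force $l$ to be a common level, contradicting the minimality of $L_{j+1}$; at level $L_j$ itself the walks are at different sites because $Y_j\ne 0$. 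Since the $\omega_{x\cdot}$ are i.i.d.\ across sites, the annealed probability of any pair of path segments realizing $\{\Delta L_j = k\}$ therefore factors into a product of two single-walk annealed path probabilities. Only after this factorization may you translate $\tilde X$'s path \emph{alone}, moving its start from $(j,0)$ to $(1,0)$ while leaving $X$ fixed: each factor's law is preserved by translation invariance, and the no-common-level constraint is unaffected because it depends only on the level sequences. This asymmetric translation, licensed by the disjointness rather than by simultaneous translation invariance, is what yields $P_{(0,0),(j,0)}(L_1\in\cdot)=P_{(0,0),(1,0)}(L_1\in\cdot)$ for every $j\ne 0$. (For what it is worth, the paper uses the same fact implicitly, in the equality $\mathbb{E}\big[P^\omega_{(l,i),(l+j,i)}(\Delta L_0 = k_n)\big] = P_{(0,0),e_1}(L_1=k_n)$, without spelling out why the nonzero horizontal offset $j$ is irrelevant.)
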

\begin{proof}
 We prove the second statement. The first statement can be proved similarly. Call a level a non meeting common level (n.m.c.l) if the two walks hit the level but do not meet at that level.%Call $L_k$ a meeting level(m.l.) if the two walks meet at that level, that is $X_{\lambda_{L_k}}=\tilde{X}_{\tilde{\lambda}_{L_k}}$. 
For positive integers $k_1,k_2,\cdots,k_n$,
\[P_{x,y}\big(R_1=k_1,R_2=k_2,\cdots, R_n=k_n\big)= \sum_i P_{x,y}\big(R_1=k_1, \cdots, R_{n}=k_{n}, i \mbox{ is the } n \mbox{'th  n.m.c.l.}\big)\]
\[= \sum_{i,j\ne 0} P_{x,y}\big(R_1=k_1, \cdots, R_{n-1}=k_{n-1}, i \mbox{ is the } n \mbox{'th  n.m.c.l.},\tilde{X}_{\tilde{\lambda}_{i}}\cdot e_1-X_{\lambda_{i}}\cdot e_1=j,R_n=k_n\big)\]
\[=\sum_{i,j\ne 0,l} \mathbb{E}\Big{[} P_{x,y}^{\omega}\big(R_1=k_1, \cdots, R_{n-1}=k_{n-1}, i \mbox{ is the } n \mbox{'th  n.m.c.l.},X_{\lambda_{i}}\cdot e_1=l,\tilde{X}_{\tilde{\lambda}_{i}}\cdot e_1=j+l\big)\]
\[\hspace{5cm} \times P_{(l,i),(l+j,i)}^{\omega}(\Delta L_0 = k_n) \Big{]} \]
 \[=\sum_{i,j\ne 0,l} \mathbb{E}\Big{[} P_{x,y}^{\omega}\big(R_1=k_1, \cdots, R_{n-1}=k_{n-1}, i \mbox{ is the } n \mbox{'th  n.m.c.l.},X_{\lambda_{i}}\cdot e_1=l,\tilde{X}_{\tilde{\lambda}_{i}}\cdot e_1=j+l\big) \Big{]} \]
\[\hspace{5cm} \times P_{(0,0),e_1}( L_1 = k_n)  \]
 \[= P_{(0,0),e_1}(L_1=k_n)P_{x,y}(R_1=k_1,R_2=k_2,\cdots,R_{n-1}=k_{n-1}).\hspace{5cm} \]
The fourth equality is because $P^{\omega}_{(l,i),(l+j,i)}(\Delta L_0=k_n)$ depends on $\overline{\omega}_i$ whereas the previous term depends on the part of the environment strictly below level $i$. The proof is complete by induction.
\end{proof}

\bigskip

\begin{lemma}
\label{expL}
There exists some $a>0$ such that 
\[ E_{(0,0),(0,0)}(e^{aL_1}) < \infty \hspace{0.5cm} \mbox{ and } \hspace{0.5cm}
  E_{(0,0),(1,0)}(e^{aL_1}) < \infty.
\]

\end{lemma}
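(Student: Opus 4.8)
The plan is to show that $L_1$, the first common level of the two independent walks in a common environment, has an exponentially decaying tail under each of $P_{(0,0),(0,0)}$ and $P_{(0,0),(1,0)}$; by the standard fact that a random variable with an exponential tail bound has a finite exponential moment for a small enough parameter $a$, this yields the lemma. Since $L_1 \geq 1$ always, and since the two walks move up by between $1$ and $K$ levels at each step, it suffices to bound $P_{x,y}(L_1 > m)$, the probability that after the walks have each crossed many levels they have never yet shared a level at which both are present. The key point to extract from the ellipticity and finite-range hypotheses in Assumption \ref{model} is a \emph{uniform lower bound}, independent of the environment and of the current positions, on the probability that the two walks achieve a common level (indeed a meeting) within the next bounded number of level-crossings.

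Here is the mechanism I would use to produce that uniform bound. Fix the environment $\omega$ and condition on the current configuration: say $X$ has just crossed to some level and $\tilde X$ has just crossed to some level, with the levels differing by at most $K$ (if the gap is larger the argument is similar but one first brings the levels within $K$ of each other, which again has uniformly positive probability by Assumption \ref{model}(iii) since each walk can advance one level at a time). Using condition (iii), each admissible one-step transition of each walk has probability at least $\delta$. Therefore, in a bounded number $m_0=m_0(K)$ of steps, there is a probability at least $\delta^{2m_0}$ that the two walks are steered so that $X_{\lambda_l}=\tilde X_{\tilde\lambda_l}$ for some common level $l$ — one simply writes down an explicit deterministic target trajectory for each walk (each walk takes $e_2$-steps of size $1$, adjusting its $e_1$-coordinate so the two coincide) and bounds its probability below by $\delta^{2m_0}$ regardless of $\omega$. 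This is exactly the kind of irreducibility argument the remark after Assumption \ref{model} alludes to. Consequently, looking at the walks in blocks of $m_0$ steps, at each block there is probability at least $p:=\delta^{2m_0}>0$ of producing a common (in fact meeting) level, so the number of blocks needed is stochastically dominated by a geometric random variable; since each block advances the level by at most $Km_0$, we get $P_{x,y}^\omega(L_1 > Km_0 j) \leq (1-p)^{j}$ uniformly in $\omega$, $x$, $y$. Integrating over $\omega$ preserves this bound, and it gives $E_{x,y}(e^{aL_1})<\infty$ for any $a$ with $e^{aKm_0}(1-p)<1$.

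The main obstacle is making the "steer the walks to a common level in boundedly many steps" step genuinely uniform in the starting positions of the two walks. A priori the two walks could be horizontally very far apart, in which case no bounded number of steps can bring them together, so $L_1$ (the first common \emph{level}, not a meeting) is the right object: a common level only requires both walks to be present at the same $e_2$-value, which happens regardless of horizontal displacement as soon as their levels synchronize, and by Assumption \ref{model}(ii)--(iii) the level of each walk after $l$ steps lies in $[l, Kl]$ with all intermediate values achievable with probability at least $\delta$ per step, so the levels can be synchronized within a bounded (in fact, at most $K$) number of extra steps with uniformly positive probability. Once this is set up carefully, the geometric-domination bookkeeping is routine. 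One should also note that finiteness of $E_{(0,0),(0,0)}(e^{aL_1})$ follows from the same argument, using that starting from a meeting configuration the walks are in particular at a common level, so $L_1$ there is dominated by the corresponding quantity in the n.m.c.l.\ case; alternatively run the identical block argument.
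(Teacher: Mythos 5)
Your argument is correct and matches the paper's (very terse) proof: both use Assumption~\ref{model}(iii) to obtain a uniform lower bound, in $\omega$ and in the starting positions, on the probability that the two walks share a level within every window of boundedly many levels, then deduce a geometric tail bound on $L_1$ and hence a finite exponential moment. You also correctly isolate the point the terse proof glosses over, namely that the target event should be a common level (a constraint on the $e_2$-coordinates only, obtainable by steering one-level-up moves) rather than an actual meeting of the walks, which is what keeps the per-block probability bound uniform over the horizontal displacement.
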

\begin{proof} 
By the ellipticity assumption (iii), we have
\[ P_{(0,0),(0,0)}(L_1 > k ) \le (1-\delta)^{[\frac{k}{K} ]}, \]
\[P_{(0,0),(1,0)} (L_1 > k ) \le (1-\delta)^{[ \frac{k}{K} ]}. \]
Since $L_1$ is stochastically dominated by a geometric random variable, we are done. 
\end{proof}
\vspace{0.3cm}

Let us denote by $X_{[0,n]}$ the set of points visited by the walk upto time $n$. It has been proved in Proposition 5.1 of \cite{rsqi} that for any starting points $x,y \in \mathbb{Z}^2$
\begin{eqnarray}
\label{intup}
 E_{x,y} \Big{(} \vert X_{[0,n]} \cap \tilde{X}_{[0,n]} \vert \Big{)} \leq C\sqrt{n}. 
 \end{eqnarray}
This inequality is obtained by control on a Green's function. The above lemma and the inequality that follows tell us that common levels occur very frequently but the walks meet rarely. Let
\begin{eqnarray}
\label{c1}
 E_{(0,0),(1,0)}(L_1) = c_1 , \qquad E_{(0,0),(0,0)}(L_1)=c_0 .
 \end{eqnarray}
We will need the following lemma.

\bigskip

\begin{lemma}
\label{LDP}
For each $\epsilon >0$, there exist constants $C>0 \mbox{, }b(\epsilon)>0\mbox{, } d(\epsilon)>0$
such that 
\[ P_{(i,j)}^n\Big{(}\frac{L_n}{n} \ge c_1+ \epsilon \Big{)} \le C\exp\big{[}-nb(\epsilon) \big{]}, \]
\[P_{(i,j)}^n\Big{(}\frac{L_n}{n} \le c_1- \epsilon \Big{)} \le C\exp\big{[}-nd(\epsilon) \big{]} .\]
Thus $\frac{L_n}{n} \rightarrow c_1 \,\, P_{0,0}$ a.s.
\end{lemma}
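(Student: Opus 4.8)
\textbf{Proof proposal for Lemma~\ref{LDP}.}
The plan is to decompose $L_n$ into an i.i.d.\ sum plus a bounded boundary contribution, and then apply a standard Cram\'er-type large deviation bound, using Lemma~\ref{expL} for the necessary exponential moments. Recall from the discussion preceding Lemma~\ref{iid} that the increments $\Delta L_j = L_{j+1}-L_j$ fall into two classes: those indexed by $j \in I$ (following a meeting level) have the law of $L_1$ under $P_{(0,0),(0,0)}$, and those indexed by $j \notin I$ (following a non-meeting common level) have the law of $L_1$ under $P_{(0,0),(1,0)}$; by Lemma~\ref{iid} the variables $Q_1,Q_2,\dots$ within the first class are i.i.d., as are the variables $R_1,R_2,\dots$ within the second. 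Write $L_n = \sum_{m=1}^{M_n} Q_m + \sum_{m=1}^{n-M_n} R_m$ where $M_n=\#\{j<n : j\in I\}$ is the number of meeting levels among the first $n$ common levels. The key structural input — essentially a restatement of \eqref{intup} — is that meetings are rare, so $M_n$ is typically of order $\sqrt{n}$; more precisely, $E_{(i,j)}^n(M_n) \le C\sqrt{n}$ uniformly, since each meeting level contributes at least one point to $X_{[0,\cdot]}\cap \tilde X_{[0,\cdot]}$ and the walks reach level $n$ in at most $O(n)$ steps for which \eqref{intup} applies (with a minor stopping-time truncation at $\lambda_n$).

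First I would handle the dominant sum $\sum_{m=1}^{n} R_m$: since $E_{(0,0),(1,0)}(L_1)=c_1$ and, by Lemma~\ref{expL}, $R_1$ has a finite exponential moment $E(e^{aR_1})<\infty$, the classical Chernoff bound gives constants $b_0(\e),d_0(\e)>0$ with
\[
P\Big(\sum_{m=1}^{n} R_m \ge (c_1+\tfrac{\e}{2})n\Big)\le e^{-nb_0(\e)},
\qquad
P\Big(\sum_{m=1}^{n} R_m \le (c_1-\e)n\Big)\le e^{-nd_0(\e)}.
\]
The lower bound for $L_n$ is then immediate, because $L_n \ge \sum_{m=1}^{n-M_n} R_m$ and replacing the $R$'s we drop (at most $M_n$ of them, each nonnegative) only decreases the sum; strictly, one reindexes so that $L_n \ge \sum_{m=1}^{n} R_m - \sum_{m=1}^{M_n} R_m$ is not quite right since the $R$'s are interleaved, so instead I would note directly that $\{L_n \le (c_1-\e)n\}$ forces $\sum_{m=1}^{n-M_n} R_m \le (c_1-\e)n$, and since $n - M_n \ge n - \sqrt n \log n$ except on an event of probability $\le C\sqrt n/(\sqrt n \log n)\to 0$... — cleaner: condition on $M_n$ and use that $\sum_{m=1}^{n-M_n}R_m$ stochastically dominates a sum of $n - M_n$ i.i.d.\ copies of $R_1$, so on $\{M_n \le \sqrt n\,\log n\}$ the Chernoff bound applies with $n-M_n$ in place of $n$ and a slightly adjusted $\e$. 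For the upper bound on $L_n$, write $L_n \le \sum_{m=1}^{n} R_m + \sum_{m=1}^{M_n} Q_m$ (again after reindexing, since every increment is one of the two types), bound the first term by the Chernoff estimate at level $(c_1+\tfrac\e2)n$, and bound the second: on $\{M_n \le \sqrt n\,\log n\}$ one has $\sum_{m=1}^{M_n} Q_m \le \sum_{m=1}^{\fl{\sqrt n \log n}} Q_m$, whose mean is $c_0\sqrt n \log n = o(n)$ and which is $\le \tfrac{\e}{2}n$ with probability $\ge 1 - e^{-c\sqrt n}$ by another application of Lemma~\ref{expL} and Chernoff. Combining, $P_{(i,j)}^n(L_n/n \ge c_1+\e) \le e^{-nb(\e)} + P(M_n > \sqrt n \log n)$, and the last term is $\le C\sqrt n/(\sqrt n \log n) = C/\log n$, which is not exponentially small — so this is where the argument needs the real work.

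The main obstacle, therefore, is getting an \emph{exponential} tail bound on $M_n$, i.e.\ on the number of meetings, rather than merely the $\sqrt n$ first-moment bound \eqref{intup}. The idea to close this gap is a renewal/regeneration argument: after each meeting level the pair of walks restarts (in law, after shifting), and the number of \emph{further} common levels until the next meeting is, by Lemma~\ref{iid} and the identification of the $R$-distribution, a random variable $\nu$ with $P(\nu > k)$ decaying like $k^{-1/2}$ (this is the quantitative content of \eqref{intup}, since $E(\text{common levels before meeting}) $ relates to $E|X_{[0,n]}\cap\tilde X_{[0,n]}|/\sqrt n$). Then $M_n$ is at most the number of renewals of an i.i.d.\ sequence $\nu_1,\nu_2,\dots$ needed to exceed $n$; a polynomially-tailed renewal sequence has $P(\sum_{m=1}^{\ce{K\sqrt n}}\nu_m < n)$ decaying faster than any polynomial — in fact stretched-exponentially, $\le e^{-c n^{1/2}\wedge\cdots}$ — which suffices, because the claimed bounds $C\exp[-nb(\e)]$ can be relaxed to $C\exp[-n^{\alpha}b(\e)]$ without affecting any downstream use (they are only needed to get a.s.\ convergence $L_n/n\to c_1$ via Borel--Cantelli, for which any summable tail works). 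Concretely I would: (1) define the regeneration structure at meeting levels and invoke the strong Markov property in the common environment together with Lemma~\ref{iid}; (2) extract from \eqref{intup} the polynomial tail $P_{(0,0),(0,0)}(\#\{\text{common levels} \le L_1\}\ \text{large})$ — equivalently a polynomial lower tail for the $Q$-type gap measured in common levels; (3) run the renewal estimate to get $P(M_n \ge t\sqrt n) \le e^{-c t}$ or a stretched-exponential in $n$ after optimizing; (4) assemble the three pieces (lower $R$-sum, upper $R$-sum, controlled $Q$-correction on the good $M_n$ event) to get the two displayed inequalities with $b(\e),d(\e)>0$ (or with $n$ replaced by $n^{1/2}$, which I would simply state as sufficient); (5) deduce $L_n/n\to c_1$ $P_{0,0}$-a.s.\ by Borel--Cantelli, noting the bounds are summable in $n$ along with the almost-sure finiteness of all increments from Lemma~\ref{expL}.
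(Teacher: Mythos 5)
Your decomposition of $L_n$ into $Q$-type and $R$-type increments and your realization that the whole problem reduces to controlling the number $M_n$ of meeting levels is exactly right, and matches the paper's strategy. You also correctly diagnose that the first-moment bound $E(M_n)\le C\sqrt n$ from \eqref{intup} is too weak by itself. However, the way you propose to close that gap does not work as stated, and the paper closes it differently.

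\textbf{The gap.} Your renewal estimate hinges on the claim that $\nu$ (common levels between consecutive meetings) has tail $P(\nu>k)\sim k^{-1/2}$, and you deduce $P(M_n\ge t\sqrt n)\le (1-P(\nu>n))^{t\sqrt n}\approx e^{-ct}$. But \eqref{intup} only gives $E(\nu)=\infty$; it does not pin down a polynomial tail, let alone the exponent $1/2$. A heavier-than-expected but still infinite-mean law such as $P(\nu>k)\asymp 1/(k\log k)$ is not excluded, and for it your bound degenerates: $(1-1/(n\log n))^{t\sqrt n}\approx \exp(-t/(\sqrt n\log n))$, which for $t=\alpha\sqrt n$ tends to $1$. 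So the stretched-exponential bound you fall back on is not actually established, and Borel--Cantelli is then out of reach. This is a genuine hole, not merely a loss of sharpness.

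\textbf{How the paper avoids it.} The paper never needs the tail of the inter-meeting gap. It first uses Lemma~\ref{expL} to discard the event $\{L_n> C_1 n\}$ at exponential cost, and then on the complementary event $\{L_n\le C_1 n\}$ splits on whether $|I_n|\ge\gamma n$. Many meetings plus $L_n\le C_1 n$ forces the level-increments between successive meetings to satisfy $T_1+\dots+T_{[\gamma n]}\le C_1 n$, i.e.\ a sum of $\gamma n$ i.i.d.\ variables with \emph{infinite mean} to be of order $n$. Truncating $T_i\wedge K$ with $K$ chosen so the truncated mean exceeds $2C_1/\gamma$ and applying Cram\'er gives an exponential bound in $n$. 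The point is that the infinite mean $E(T_1)=\infty$ plus truncation is all that is needed --- no quantitative tail. The few-meetings case is then the routine Cram\'er bound for the $R$-sum and, via the choice $\gamma=\e/(4c_0)$, a Cram\'er bound for the at-most-$\gamma n$ $Q$-increments.

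\textbf{How to repair your version.} You can keep your cleaner bookkeeping (with $\nu_i$ counting common levels rather than levels, so that $\nu_1+\dots+\nu_{M_n}\le n$ automatically, with no need to pre-condition on $L_n\le C_1 n$), but you should replace the unproven $k^{-1/2}$ tail with the truncation device: since $E(\nu)=\infty$, choose $T$ with $E(\nu\wedge T)\ge 2/\alpha$, and then Hoeffding or Cram\'er on the bounded variables $\nu_i\wedge T$ gives $P(M_n\ge\alpha n)=P(\nu_1+\dots+\nu_{[\alpha n]}\le n)\le e^{-c(\alpha)n}$, the full exponential rate. After that, your assembly of the three pieces goes through with no need to weaken the conclusion to $\exp(-n^{1/2}b(\e))$.
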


\begin{proof}
 We prove the first inequality. From Lemma \ref{expL}, we can find $a>0$ and some $\nu>0$ such that for each $n$,
 \[ E_{(i,j)}^n\big{(} \exp (a L_n)\big{)} \le \nu^n. \]
 We thus have 
 \begin{eqnarray*}
 P_{(i,j)}^n(\frac{L_n}{n} \ge C_1) & \le& \frac{E_{(i,j)}^n\big{(}\exp (a L_n) \big{)}}{ \exp(aC_1n)}\\
 &\le  & \exp \{ n(\log \nu - aC_1)\}.
 \end{eqnarray*}
 Choose $C_1 >>0$ so that $\log \nu -aC_1 <0$. Now  
\begin{eqnarray*}
\label{1}
P_{(i,j)}^n(\frac{L_n}{n} \ge c_1+\epsilon) \le \exp\{n(\log \nu - a C_1)\} +P_{(i,j)}^n(c_1+\epsilon \le \frac{L_n}{n} \le C_1). 
\end{eqnarray*}
Let $\gamma=\frac{\epsilon}{4c_0}$. Denote by $I_n = \{ j: 0\le j < n, L_j \mbox{ is a meeting level } \}$ and recall $\Delta L_j =L_{j+1}-L_j$. We have 
\begin{eqnarray*}
\label{2}
P_{(i,j)}^n( c_1 +\epsilon \le \frac{L_n}{n} \le C_1 ) &\le& P_{(i,j)}^n \big(  \vert I_n \vert \ge \gamma n,L_n \le C_1n \big) \\
&\;+&P_{(i,j)}^n\Big( \vert I_n \vert < \gamma n, c_1+\epsilon \le \frac{ \sum_{ j \notin I_n, j <n } \Delta L_j }{n} + \frac{ \sum_{ j \in I_n } \Delta L_j }{n} \le C_1 \Big).
\end{eqnarray*}
Let $T_1,T_2,\dots$ be the increments of the successive meeting levels. By an argument like the one given in Lemma \ref{iid}, $\{T_i \}_{i \ge 1} $ are i.i.d. Also from (\ref{intup}), it follows that $E(T_1)=\infty$. Hence we can find $M=M(\gamma) >>2\frac{C_1}{\gamma}$ and $K=K(\gamma)$ such that 
\[E(T_1\wedge K)\ge M.\]
Now,
\begin{eqnarray*}
P_{(i,j)}^n \big( \vert I_n \vert \ge \gamma n , L_n \le C_1 n \big) &\le& P_{(i,j)}^n(T_1 +T_2+\cdots+T_{[\gamma n]} \le C_1n) \\
&\le& P_{(i,j)}^n \Big( \frac{T_1 +T_2 +\cdots T_{[\gamma n]} }{[\gamma n]} \le 2\frac{C_1}{\gamma} \Big)\\
&\le& P_{(i,j)}^n \Big( \frac{T_1 \wedge K +T_2\wedge K +\cdots T_{[\gamma n]} \wedge K }{[\gamma n]} \le 2\frac{C_1}{\gamma} \Big)\\
&\le&  \exp[-nb_2].
\end{eqnarray*} 
for some $b_2 >0$. The last inequality follows from standard large deviation theory. Also

\[P_{(i,j)}^n\Big( \vert I_n \vert < \gamma n, c_1+\epsilon \le \frac{ \sum_{ j \notin I_n,j <n } \Delta L_j }{n} + \frac{ \sum_{ j \in I_n } \Delta L_j }{n} \le C_1 \Big) \hspace{5cm}\]
\[ \le P_{(i,j)}^n\Big( \vert I_n \vert < \gamma n, c_1+\frac{\epsilon}{2} \le \frac{1}{n} \sum_{j \notin I_n, j<n} \Delta L_j \Big) +  P_{(i,j)}^n\Big( \vert I_n \vert < \gamma n,  \frac{1}{n} \sum_{j \in I_n} \Delta L_j \ge \frac{\epsilon}{2} \Big).\]
Let $\{M_j\}_{j\ge 1}$ be i.i.d. with the distribution of $L_1$ under $P_{0,e_1}$ and $\{N_j\}_{j\ge 1}$ be i.i.d. with the distribution of $L_1$ under $P_{0,0}$. We thus have that the above expression is less than
\begin{eqnarray*}
  P\Big( \frac{1}{n}\sum_{j=1}^n M_j \ge c_1 +\frac{\epsilon}{2} \Big) +P\Big( \frac{1}{n} \sum_{j=1}^{[\gamma n]} N_j \ge \frac{\epsilon}{2} \Big) & \le & \exp(-nb_3(\epsilon)) + P\Big( \frac{1}{[\gamma n]} \sum_{j=1}^{[\gamma n]} N_j \ge \frac{\epsilon}{2\gamma} \Big) \\
 &\le& \exp(-nb_3(\epsilon))+\exp(-nb_4(\epsilon)).
 \end{eqnarray*}
for some $b_3(\epsilon), b_4(\epsilon)>0$. Recall that $\frac{\epsilon}{2\gamma} > c_0$ by our choice of $\gamma$. Combining all the inequalities, we have 
\[P_{(i,j)}^n\big( \frac{L_n}{n} \ge c_1 + \epsilon \big) \le 3 \exp(-nb(\epsilon)). \]
for some $b(\epsilon)>0$. The proof of the second inequality is similar. 
\end{proof}

Returning to (\ref{eqn3}), let us separate the sum into two parts as
%\begin{equation}
%\label{eqn5}
 \[\frac{1}{\sqrt{n}}  \sum_{k=0}^{[\frac{n(1+\epsilon)}{c_1}]} P_{(i,j)}^n
 (Y_k=0   \mbox{ and } L_k \leq n-1 ) \hspace{5cm}\]
%\end{equation}
\[ \hspace{5cm} + \frac{1}{\sqrt{n}}  \sum_{k=[\frac{n(1+\epsilon)}{c_1}]+1}^{n-1} P_{(i,j)}^n
 (Y_k=0  \mbox{ and } L_k \leq n-1 ).\hspace{0.5cm}\]
Now the second term  above \; %(\ref{eqn5}) 
is
\[ \frac{1}{\sqrt{n}}  \sum_{k=[\frac {n(1+\epsilon)}{c_1}]+1}^{n-1} P_{(i,j)}^n (X_{\lambda_{L_{k}}} = \tilde{X}_{\tilde{\lambda}_{L_{k}}}   \mbox{ and } L_k \leq n-1 )  \leq  \frac{Cn}{\sqrt{n}}P_{(i,j)}^n(L_{[\frac{n(1+\epsilon)}{c_1}]} \leq n )\]
%\[\qquad \qquad \qquad \qquad \leq (\mbox{for large n}) \frac{Dn}{\sqrt{n}}P_{r_i \sqrt{n},r_j \sqrt{n}}\Big{(}\frac{L_{[\frac{n(1+\epsilon)}{c_1}]}}{[\frac{n(1+\epsilon)}{c_1}]}
%\leq \frac{c_1+\eta}{1+\epsilon} \Big{)}\]
which goes to $0$ as $n$ tends to infinity by Lemma \ref{LDP}. Similarly
\begin{eqnarray*}
 \frac{1}{\sqrt{n}} \Big{[}\sum_{k=0}^{[\frac{n(1-\epsilon)}{c_1}] } P_{(i,j)}^n(Y_k=0) - \sum_{k=0}^{[\frac{n(1-\epsilon)}{c_1}]}  P_{(i,j)}^n(Y_k=0, L_k \leq n-1 ) \Big{]} 
&\leq& \frac{1}{\sqrt{n}} \sum_{k=0}^{[\frac{n(1- \epsilon)}{c_1} ]} P_{(i,j)}^n(L_k \geq n) \\
&\leq& \frac{Cn}{\sqrt{n}} P_{(i,j)}^n\big{(}L_{[\frac{n(1-\epsilon)}{c_1}]} \geq n \big{)} 
\end{eqnarray*}
also goes to $0$ as $n$ tends to infinity. Thus
\[ \frac{1}{\sqrt{n}} \sum_{k=0}^{n-1} P_{(i,j)}^n(Y_k=0, L_k \leq n-1) \hspace{8cm}\]  
%\begin{eqnarray}
%\label{eqn6}
\[\qquad  = \frac{1}{\sqrt{n}} \sum_{k=0}^{[\frac{n(1-\epsilon)}{c_1}]} P_{(i,j)}^n (Y_k=0) + \frac{1}{\sqrt{n}} \sum_{k=[\frac{n(1-\epsilon)}{c_1}]+1}^{[\frac{n(1+\epsilon)}{c_1}]} P_{(i,j)}^n(Y_k=0, L_k \leq n-1) +a_n(\epsilon)\]
%\end{eqnarray}
where $a_n(\epsilon) \to 0$ as $n \to \infty$. Now we will show the second term in %(\ref{eqn6}) 
in the right hand side of the above equation is negligible.
Let $\tau = \min \lbrace j \geq [\frac{n(1-\epsilon)}{c_1}]+1 : Y_j=0 \rbrace$. Using the Markov property for the second line below, we get
\begin{eqnarray*}
 \frac{1}{\sqrt{n}} \sum_{k=[\frac{n(1-\epsilon)}{c_1}]+1}^{[\frac{n(1+\epsilon)}{c_1}]} P_{(i,j)}^n(Y_k=0, L_k \leq n-1) 
&=&\frac{1}{\sqrt{n}} E_{(i,j)}^n \Big{[} \mathbb{I}_{\lbrace L_{\tau} \leq n \rbrace} \sum_{k=\tau}^{[\frac{n(1+\epsilon)}{c_1}]} \mathbb{I}_{\lbrace Y_k=0,L_k\leq n-1 \rbrace} \Big{]} \\
%&\leq& \frac{1}{\sqrt{n}} E_{0,0} \Big{[}\sum_{k=0}^{[\frac{2n\epsilon}{c_1}]} \mathbb{I}_{\lbrace Y_k=0,L_k \leq n-[\frac{n(1-\epsilon)}{c_1}] \rbrace } \Big{]}\\
%&\leq& \frac{1}{\sqrt{n}} E_{0,0} \Big{[}\sum_{k=0}^{[\frac{2n\epsilon}{c_1}]} \mathbb{I}_{\lbrace Y_k=0,L_k \leq n \rbrace} \Big{]}. 
%\end{eqnarray*}
%Now 
%\begin{eqnarray*}
%\frac{1}{\sqrt{n}} E_{0,0} \Big{[}\sum_{k=0}^{[\frac{2n\epsilon}{c_1}]} \mathbb{I}_{\lbrace Y_k=0,L_k \leq n \rbrace} \Big{]} 
&\le&  \frac{1}{\sqrt{n}} E_{0,0} \Big{[}\sum_{k=0}^{[\frac{2n\epsilon}{c_1}]} \mathbb{I}_{\lbrace Y_k=0 \rbrace} \Big{]} \\
&=& \frac{1}{\sqrt{n}}E_{0,0}\Big{[} \mathbb{I}_{\{L_{[\frac{2n\epsilon}{c_1}]} \leq 4n\epsilon \}} \sum_{k=0}^{[\frac{2n\epsilon}{c_1}]} \mathbb{I}_{\{Y_k=0\}} \Big{]}\\
%\end{eqnarray*}
 &\hspace{1cm}&+ \frac{1}{\sqrt{n}}E_{0,0} \Big{[} \mathbb{I}_{\{L_{[\frac{2n\epsilon}{c_1}]} > 4n \epsilon \}} \sum_{k=0}^{[\frac{2n\epsilon}{c_1}]} \mathbb{I}_{\{Y_k=0\}} \Big{]} .
\end{eqnarray*}
In the expression after the last equality, we have 
\begin{eqnarray*}
\mbox{  First term } &\leq& \frac{1}{\sqrt{n}}  E_{0,0}\Big{(}\big{\vert}X_{[0,4n\epsilon] }\bigcap \tilde{X}_{[0,4n\epsilon]}\big{\vert} \Big{)} \\
&\leq& \frac{C}{\sqrt{n}} \sqrt{ 4n\epsilon } \leq C \sqrt{\epsilon}. 
\end{eqnarray*}
\begin{eqnarray*}
 \mbox{ Second term } %&\leq& \frac{Cn}{\sqrt{n}} P_{(i,j)}^n\Big{(}L_j > \big{[}\frac{2n\epsilon(c_1+\epsilon)}{c_1}\big{]} \mbox{ for some } j=0,1,\dots,[\frac{2n\epsilon}{c_1}] \Big{)} \\
&\leq& \frac{Cn}{\sqrt{n}} P_{0,0}\big{(}L_{[\frac{2n\epsilon}{c_1}]} > 4n\epsilon \big{)} \longrightarrow 0 .
\end{eqnarray*}
by Lemma \ref{LDP}. This gives us
\[ \frac{1}{\sqrt{n}} \sum_{k=0}^{n-1} P_{(i,j)}^n(Y_k=0, L_k \leq n-1)  = \frac{1}{\sqrt{n}}\sum_{k=0}^{[\frac{n(1-\epsilon)}{c_1}]} P_{(i,j)}^n(Y_k=0 ) + O(\sqrt{\epsilon}) + b_n(\epsilon) \]
where $b_n(\epsilon) \to 0 $ as $n \to \infty$. By the Markov property again and arguments similar to above
\begin{eqnarray*}
\frac{1}{\sqrt{n}}\sum_{k=\big{[}\frac{n(1-\epsilon)}{c_1} \big{]} +1} ^{\big{[}\frac{n}{c_1} \big{]} } P_{(i,j)}^n (Y_k=0) &\le& \frac{1}{\sqrt{n}} \sum_{k=0}^{\big{[}\frac{n\epsilon}{c_1}\big{]}}
P_{0,0}(Y_k=0) \\
&\le& O(\sqrt{\epsilon}) + c_n(\epsilon)
\end{eqnarray*}
where $c_n(\epsilon) \to 0$ as $ n \to \infty$. So what we finally have is
\begin{eqnarray}
\label{eqn7}
 \frac{1}{\sqrt{n}} \sum_{k=0}^{n-1} P_{(i,j)}^n(Y_k=0, L_k \leq n-1 ) =\frac{1}{\sqrt{n}}\sum_{k=0}^{[\frac{n}{c_1}]} P_{(i,j)}^n(Y_k=0 )+ O(\sqrt{\epsilon})+d_n(\epsilon)
 \end{eqnarray}
where $d_n(\epsilon) \to 0$ as $ n \to \infty$.
 
\bigskip

\subsection{Control on the Green's function}
We follow the approach used in \cite{brs} to find the limit as $n \to \infty$ of 
\[\frac{1}{\sqrt{n}}\sum_{k=0}^{[\frac{n}{c_1}]} P_{(i,j)}^n(Y_k=0 )\]
in the right hand side of (\ref{eqn7}). Since $\epsilon>0$ is arbitrary, this in turn will give us the limit of the left hand side of (\ref{eqn7}). 

In the averaged sense $Y_k$ is a random walk on $\mathbb{Z}$ perturbed at $0$ with transition kernel $q$ given by
\[ q(0,y)= P_{(0,0),(0,0)} (X_{\lambda_{L_1}}\cdot e_1 - \tilde{X}_{\tilde{\lambda}_{L_1}}\cdot e_1  = y)  \]
 \[\qquad \qquad \qquad \qquad \, \, q(x,y)=P_{(0,0),(1,0)} (X_{\lambda_{L_1}}\cdot e_1 - \tilde{X}_{\tilde{\lambda}_{L_1}}\cdot e_1  = y-x-1) \hspace{1cm} \mbox{ for } x \neq 0.\]
Denote the transition kernel of the corresponding unperturbed walk by $\overline{q}$. \[\overline{q}(x,y)=P_{(0,0)}\times P_{(0,0)}(X_{\lambda_{L_1}}\cdot e_1 - \tilde{X}_{\tilde{\lambda}_{L_1}}\cdot e_1= y-x). \]
where $P_{(0,0)}\times P_{(0,0)}$ is the measure under which the walks are independent in independent environments. Note that 
\[ q(x,y)=\overline{q}(x,y) \hspace{1cm} \mbox{for } x \neq 0. \]
The $\overline{q}$ walk is easily seen to be aperiodic(from assumption \ref{model} (iii)), irreducible and symmetric and these properties can be transferred to the $q$ walk. The $\overline{q}$ can be shown to have finite first moment (because $L_1$ has exponential moments) with mean $0$. Green functions for the $\overline{q}$ and $q$ walks are given by
\[ \overline{G}_n(x,y)=\sum_{k=0}^{n}\overline{q}^k(x,y) \mbox{ and } G_n(x,y)=\sum_{k=0}^{n}q^k(x,y). \]
The potential kernel $\overline{a}$ of the $\overline{q}$ walk is
 \[ \overline{a}(x) = \lim \limits_{n \to \infty} \{ \overline{G}_n(0,0)-\overline{G}_n(x,0) \} .\]
It is a well known result from Spitzer~\cite{spitzer}(sections $28$ and $29$) that  $\lim \limits_{ x \to \pm \infty} \frac{\overline{a}(x)}{|x|}= \frac{1}{\overline{\sigma}^2}$ where
\begin{eqnarray}
\label{sigma}
\overline{\sigma}^2 = \mbox{ variance of the } \overline{q} \mbox{ walk}.
\end{eqnarray}
Furthermore we can show that (see \cite{brs} page 518)
\begin{eqnarray}
\label{beta}
 \frac{1}{\sqrt{n}} G_{n-1}(0,0) \beta = \frac{1}{\sqrt{n}} E_{0,0}[\overline{a}(Y_n)] \mbox{ where } \beta=\sum_{x \in \mathbb{Z} } q(0,x) \overline{a}(x). 
 \end{eqnarray}
First we will show $ \frac{1}{\sqrt{n}} E_{0,0}|Y_n|$  converges to conclude that $ \frac{1}{\sqrt{n}} E_{0,0}[\overline{a}(Y_n)]$  converges. Notice that $Y_k=X_{\lambda_{L_k}}\cdot e_1-\tilde{X}_{\tilde{\lambda}_{L_k}}\cdot e_1$ is a martingale w.r.t. 
$\{\mathcal{G}_k=\sigma(X_1,X_2,\dots,X_{\lambda_{L_k}},\tilde{X}_1,\tilde{X}_2,\dots,\tilde{X}_{\tilde{\lambda}_{L_k}})  \}$ under the measure $P_{0,0}$. We will use the martingale central limit theorem (\cite{durr} page 414) to show that $\frac{Y_n}{\sqrt{n}}$ converges to a centered Gaussian.
%\begin{proposition}
%Let $ M_n= Y_n-Y_{n-1}$. Suppose
%\begin{enumerate}
% \item 
%$\frac{1}{n} \sum_{k=1}^{n} E(M_k^2\vert \mathcal{G}_{k-1}) \rightarrow \sigma^2 \mbox{ in %probability}$
%\item
%$\frac{1}{n} \sum_{k=1}^{n} E(M_k^2 \mathbb{I}_{|M_k|>\epsilon \sqrt{n}}\vert \mathcal{G}_{k-1}) \rightarrow 0 \mbox{ in probability}$
%\end{enumerate}
%Then $\frac{Y_n}{\sqrt{n}}$ converges weakly to a centered Gaussian random variable with variance  $\sigma^2$ 
%\end{proposition}
We first show
\begin{eqnarray}
\label{3.13}
\frac{1}{n} \sum_{k=1}^{n} E_{0,0}\Big{(}(Y_k-Y_{k-1})^2 \mathbb{I}_{\lbrace|Y_k- Y_{k-1}|>\epsilon \sqrt{n}\rbrace}\Big{\vert} \mathcal{G}_{k-1}\Big{)} &\rightarrow& 0 \mbox{ in probability}.
\end{eqnarray}
We already have from Lemma \ref{expL} that for some $a>0$
\[ E_{0,0}(e^{\frac{a}{4K}Y_1}) \leq E_{0,0}(e^{a L_1}) < \infty ,\]
\[ E_{(0,0),(1,0)}(e^{\frac{a}{4K}Y_1}) \leq E_{(0,0),(1,0)}(e^{a L_1})  < \infty .\]
Thus $E_{0,0}\big{[} (Y_k - Y_{k-1})^5 \vert \mathcal{G}_{k-1} \big{]} \leq C$ and so
%$\frac{1}{n} \sum_{k=1}^{n} E\Big{(}(Y_k-Y_{k-1})^2 \mathbb{I}_{\lbrace|Y_k-Y_{k-1}|>\epsilon \sqrt{n}\rbrace} \vert \mathcal{G}_{k-1}\Big{)} \rightarrow 0$ in probability. 
(\ref{3.13}) holds. Now we check 
\[ \frac{1}{n} \sum_{k=1}^{n} E_{0,0}\Big{(}(Y_k-Y_{k-1})^2\vert \mathcal{G}_{k-1}\Big{)} \rightarrow \overline{\sigma}^2 \mbox{ in probability}.\]
Note that
\begin{eqnarray*}
 E_{0,0}\big{[}(Y_k-Y_{k-1})^2 \big{\vert} \mathcal{G}_{k-1}\big{]} &=& E_{0,0}\big{[}(Y_k-Y_{k-1})^2 \mathbb{I}_{\{Y_{k-1} = 0\}} \vert \mathcal{G}_{k-1}\big{]} +  E_{0,0}\big{[}(Y_k-Y_{k-1})^2\mathbb{I}_{\{Y_{k-1} \ne 0\}} \vert\mathcal{G}_{k-1}\big{]}\\
 &=& u_0\mathbb{I}_{\{Y_{k-1}=0\}}+\overline{\sigma}^2\mathbb{I}_{\{Y_{k-1}\neq 0\}}
\end{eqnarray*}
where $u_0=E_{0,0}(Y_1^2)$ and $E_{(1,0),(0,0)}((Y_1-1)^2)=\overline{\sigma}^2$ (as defined in (\ref{sigma})), the variance of the unperturbed walk $\overline{q}$. So
\[ \frac{1}{n} \sum_{k=1}^{n} E_{0,0}\big{[}(Y_k-Y_{k-1})^2\vert\mathcal{G}_{k-1}\big{]} = \overline{\sigma}^2 +\frac{(u_0-\overline{\sigma}^2)}{n} \sum_{k=1}^{n} \mathbb{I}_{\{Y_{k-1}=0\}}. \]
%To complete, we will show that $ \frac{1}{n}\sum_{k=1}^{n} \mathbb{I}_{\lbrace Y_{k-1}=0 \rbrace} \to 0 \mbox{ in probability} $. It can be shown that the following hold
%\[ \lim \limits_{n \to \infty} \frac{L_n}{n} = c_1 \qquad  P_{(0,0),(0,0)} \mbox{ and } P_{(0,0),(1,0)} \mbox{ a.e. } \]
%\[ \frac{X_n}{n} \to v \mbox{ and } \frac{\lambda_n}{n} \to \frac{1}{v \cdot e_2} \qquad P_0 \mbox{ a.e. for some } v \]
%Now 
%\begin{eqnarray*}
%P_{0,0}(\frac{1}{n}\sum_{k=1}^{n} \mathbb{I}_{(Y_{k-1}=0)} > \epsilon) &=& P_{0,0}\Big{(}|X_{[0,\lambda_{L_n}]} \cap \tilde{X}_{[0,\tilde{\lambda}_{L_n}]} | > n \epsilon \Big{)} \\
%&=&  P_{0,0}\Big{(}|X_{[0,\lambda_{L_n}]} \cap \tilde{X}_{[0,\tilde{\lambda}_{L_n}]} | > n \epsilon, \frac{\lambda_{L_n}}{n} \leq \frac{2c_1}{v\cdot e_2},\frac{\tilde{\lambda}_{L_n}}{n} \leq \frac{2c_1}{v\cdot e_2} \Big{)}\\
%&\hspace{0.5cm}+& P_{0,0}\Big{(}|X_{[0,\lambda_{L_n}]} \cap \tilde{X}_{[0,\tilde{\lambda}_{L_n}]} | > n \epsilon, \frac{\lambda_{L_n}}{n} > \frac{2c_1}{v\cdot e_2} \mbox { or }\frac{\tilde{\lambda}_{L_n}}{n} > \frac{2c_1}{v\cdot e_2} \Big{)}
%\end{eqnarray*}
%The second term can be made small and
% \begin{eqnarray*}
 % \mbox{first term } &\leq& P_{0,0}\big{(}|X_{[0,\frac{2c_1n}{v\cdot e_2}]} \cap \tilde{X}_{[0,\frac{2c_1n}{v \cdot e_2}]} | > n \epsilon \big{)} \\
%&\leq&    \frac{E_{0,0}\big{(}|X_{[0,\frac{2c_1n}{v\cdot e_2}]} \cap \tilde{X}_{[0,\frac{2c_1n}{v \cdot e_2}]} |\big{)}}{n \epsilon}\\
%&\leq& \frac{C\sqrt{n}}{n \epsilon} 
 %\end{eqnarray*}
To complete, by choosing $b$ large enough we get
\begin{eqnarray*}
E_{0,0}[\sum_{k=1}^n \mathbb{I}_{\{ Y_{k-1}=0\}} ] &\le& n P_{0,0}(L_n >bn)+E_{0,0}\vert X_{[0,nb]} \cap \tilde{X}_{[0,nb]} \vert\\
&\le& C \sqrt{n}.
\end{eqnarray*} 
Hence $\frac{1}{n} \sum_{k=1}^n \mathbb{I}_{\lbrace Y_{k-1}=0 \rbrace} \to 0$ in $P_{0,0}$-probability. We have checked both the conditions of the martingale central limit theorem and so we have $n^{-\frac{1}{2}}Y_n \Rightarrow N(0,\overline{\sigma}^2)$. %As pointed earlier, to show the convergence of $ n^{-\frac{1}{2}}E_{0,0} \vert \overline{a}(Y_n) \vert$ 
We now show $E_{0,0} \vert n^{-\frac{1}{2}} Y_n \vert \rightarrow E\vert N(0,\overline{\sigma}^2) \vert = \frac{2 \overline{\sigma}}{\sqrt{2\pi}} $. This will follow if we can show that $n^{-\frac{1}{2}}Y_n$ uniformly integrable. But that is clear since we have
\[ \frac{E_{0,0}Y_n^2}{n} = \overline{\sigma}^2+\frac{u_0-\overline{\sigma}^2}{n} \sum_{k=1}^n P_{0,0}(Y_{k-1}=0) \leq u_0 +\overline{\sigma}^2.\]
It is easily shown that 
\[ \lim \limits_{n \to \infty}\frac{E_{0,0}[\overline{a}(Y_n)]}{\sqrt{n}} = \lim \limits_{n \to \infty} \frac{1}{\overline{\sigma}^2}\frac{E_{0,0}\vert Y_n\vert}{\sqrt{n}} = \frac{2}{\overline{\sigma}\sqrt{2 \pi}}. \] 
We already know by the local limit theorem (\cite{durr} section 2.6) that $\lim \limits_{n\to \infty}\frac{1}{\sqrt{n}} \overline{G}_n(0,0)=\frac{2}{\overline{\sigma}\sqrt{2\pi}}$ which is equal to $\lim \limits_{n \to \infty}n^{-\frac{1}{2}}E_{0,0}[\overline{a}(Y_n)]=\lim \limits_{n\to \infty}n^{-\frac{1}{2}} \beta G_n(0,0)$ by the above computations. The arguments in (\cite{brs} page 518 (4.9)) allow us to conclude that 
\[ \sup_x \frac{1}{\sqrt{n}}\vert \beta G_n(x,0) - \overline{G}_n(x,0) \vert  \to 0.\] 
Now returning back to (\ref{eqn7}), the local limit theorem (\cite{durr} section 2.6) and a Riemann sum argument gives us 
\[ \lim \limits_{n \to \infty} \frac{1}{\sqrt{n}} \overline{G}_{[\frac{n}{c_1}]}([r_i\sqrt{n}]-[r_j \sqrt{n}],0)
=\frac{1}{\overline{\sigma}^2}\int_0^{\frac{\overline{\sigma}^2}{c_1}} \frac{1}{\sqrt{2\pi v}} \exp\big( -\frac{(r_i-r_j)^2}{2v} \big) dv.
%  = \frac{2}{\overline{\sigma}\sqrt{c_1}\sqrt{2 \pi}}\exp\big{(} - \frac{(r_i-r_j)^2}{2\overline{\sigma}^2}\big{)}.
\]
Hence the right hand side of (\ref{eqn7}) tends to  
\begin{eqnarray}
\label{green}
\lim \limits_{n \to \infty} \frac{1}{\sqrt{n}} G_{[\frac{n}{c_1}]}( [r_i\sqrt{n}]-[r_j\sqrt{n}],0) 
=\frac{1}{\beta \overline{\sigma}^2}\int_0^{\frac{\overline{\sigma}^2}{c_1}} \frac{1}{\sqrt{2\pi v}} \exp\big( -\frac{(r_i-r_j)^2}{2v} \big) dv.
% = \frac{2}{\beta\overline{\sigma}\sqrt{c_1}\sqrt{2 \pi}} \exp\big{(} - \frac{(r_i-r_j)^2}{2\overline{\sigma}^2}\big{)}.
\end{eqnarray}
This completes the proof of Proposition \ref{STEP 1}\qed.

\bigskip

\subsection{Proof of Proposition \ref{STEP 2}}
This section is based on the proof of Theorem 4.1 in Section of \cite{ff}  and (5.20) in \cite{brs}. Recall $\mathcal{F}_0 = \{ \emptyset , \Omega\}$, $\mathcal{F}_k = \sigma\big{\{} \overline{\omega}_j : j\le k-1 \big{\}}$.
%\[\qquad \qquad \qquad- \frac{1}{\sqrt{n}} \sum_{k=1}^{n+1}P_{r_i \sqrt{n},\,r_j \sqrt{n}}
%(X_{\lambda_{k-1}}= \tilde{X}_{\tilde{\lambda}_{k-1}} \mbox{ on level }k-1) \longrightarrow 0 \,\,\, \mbox{in}\,\, \mathbb{P}\mbox{-probability}.\]
%Let $\mathcal{U}_0 ={\{\emptyset, \Omega\}}, \mathcal{U}_k = \sigma\lbrace\overline{\omega}_j: j \leq k \rbrace$.
Now
\[ \frac{1}{\sqrt{n}} \sum_{k=0}^{n} \Big{\lbrace}P_{r_i \sqrt{n},r_j \sqrt{n}}^{\omega}(Y_k=0,L_k \leq n)-P_{r_i \sqrt{n},r_j \sqrt{n}}(Y_k=0,L_k \leq n)\Big{\rbrace}  \]
\[ \qquad \qquad =\frac{1}{\sqrt{n}}\sum_{l=0}^{n-1} \sum_{k=l+1}^{n} \Big{\lbrace} P_{r_i\sqrt{n},r_j \sqrt{n}}\big{(}X_{\lambda_k}= \tilde{X}_{\tilde{\lambda}_k} \mbox{ and both walks hit level } k \big{\vert} \mathcal{F}_{l+1} \big{)} \]
\[\qquad \qquad \qquad \qquad \qquad \qquad \quad \, \, -P_{r_i \sqrt{n},r_j \sqrt{n}}\big{(}X_{\lambda_k}= \tilde{X}_{\tilde{\lambda}_k} \mbox{ and both walks hit level } k \big{\vert} \mathcal{F}_{l}\big{)} \Big{\rbrace}. \]
\vspace{0.2cm}
Call $R_l= \sum_{k=l+1}^n \Big{\lbrace} \cdots \Big{\rbrace}$. It is enough to show that $\mathbb{E} (\frac{1}{\sqrt{n}}\sum_{l=0}^{n-1}  R_l)^2  \to 0$. By orthogonality of martingale increments, $\mathbb{E}R_lR_m =0 \mbox{ for } l \neq m$. Let 
 \[ \phi_n= \vert\{k: k \le n ,k \mbox{ is a meeting level } \} \vert ,\]
the number of levels at which the two walks meet up to level $n$.

\bigskip

\begin{proposition}
\label{prop1}
\[\frac{1}{n} \sum_{l=0}^{n} \mathbb{E} R_l^2  \longrightarrow 0 .\]
\end{proposition}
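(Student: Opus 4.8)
The plan is to prove Proposition~\ref{prop1} directly, by a per‑slab estimate. Recall that $R_l=\mathbb E[F^{\omega}\mid\mathcal F_{l+1}]-\mathbb E[F^{\omega}\mid\mathcal F_l]$, where $F^{\omega}=\sum_{k=1}^{n}P^{\omega}_{r_i\sqrt n,\,r_j\sqrt n}(X_{\lambda_k}=\tilde X_{\tilde\lambda_k}\text{ on level }k)=E^{\omega}_{r_i\sqrt n,\,r_j\sqrt n}[\#\{\text{meeting levels}\le n\}]$; since the $R_l$ are orthogonal, Proposition~\ref{prop1} is equivalent to $\sum_{l=0}^{n}\mathbb E R_l^2=o(n)$, so it suffices to bound $\mathbb E R_l^2$ for each $l$ and sum.

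The first task is to isolate the effect of the single slab $\overline\omega_l$. Conditionally on $\mathcal F_{l+1}=\sigma(\overline\omega_j:j\le l)$ the two walks are independent, and once either leaves the half‑space $\{x\cdot e_2\le l\}$ it never returns; hence the conditional expected number of meeting levels in $[l+1,n]$ is a \emph{bilinear} functional $G(\kappa_X,\kappa_{\tilde X})=\sum_{w,\tilde w}\kappa_X(w)\kappa_{\tilde X}(\tilde w)\,g_2(w,\tilde w)$ of the conditional entrance distributions $\kappa_X,\kappa_{\tilde X}$ of the two walks into $\{x\cdot e_2\ge l+1\}$, where $g_2(w,\tilde w)$ is the $\overline\omega_{>l}$‑average of the number of meeting levels of walks issued from $w,\tilde w$; by vertical translation invariance and the analysis of Section~3.2, $g_2(w,\tilde w)=\beta^{-1}\overline G_{[(n-l)/c_1]}(w\cdot e_1-\tilde w\cdot e_1,0)+O(1)$ with $\beta,c_1$ as in (\ref{beta}),(\ref{c1}). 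Since the number of meeting levels $\le l$ is $\mathcal F_l$‑measurable, $R_l=G(\kappa_X,\kappa_{\tilde X})-\mathbb E[G(\kappa_X,\kappa_{\tilde X})\mid\mathcal F_l]$ is exactly the fluctuation of $G$ produced by $\overline\omega_l$. Only the walks that actually sit on level $l$ feel this resampling, so, writing $\kappa_X=\overline\kappa_X+\delta\kappa_X$ (bar $=$ $\overline\omega_l$‑average) with $\delta\kappa_X(w)=\sum_u P^{\omega}_{r_i\sqrt n}(X_{\lambda_l}=(u,l))\,(\omega_{(u,l),\,w-(u,l)}-\mathbb E\,\omega_{(u,l),\,w-(u,l)})$, and similarly for $\tilde X$, bilinearity gives $R_l=\langle\delta\kappa_X,A\rangle+\langle\delta\kappa_{\tilde X},\tilde A\rangle+\big(G(\delta\kappa_X,\delta\kappa_{\tilde X})-\mathbb E[\,\cdot\mid\mathcal F_l]\big)$ with $A(w)=\langle\overline\kappa_{\tilde X},g_2(w,\cdot)\rangle$, $\tilde A(w)=\langle\overline\kappa_X,g_2(\cdot,w)\rangle$. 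The weights are $\mathcal F_l$‑measurable, and by Assumption~\ref{model}(i) the slab increments indexed by distinct sites $u$ are independent, mean zero, and signed measures of total mass zero.

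The key estimate concerns the first two terms (the third, being second order, is handled the same way and is smaller). Because the site‑$u$ increment has total mass zero, its pairing with $A$ sees only the oscillation of $A$ over a window of diameter $\le 2K$ about $(u,l)$, and likewise for $g_2$; and by Spitzer's bounds on the potential kernel $\overline a$ of the $\overline q$‑walk — bounded increments, with $\overline a(x\pm1)-\overline a(x)\to\pm\overline\sigma^{-2}$ (\cite{spitzer}, (\ref{sigma})) — the kernel $g_2$, hence $A,\tilde A$, is Lipschitz with bounded second difference, \emph{uniformly in $n$ and $l$}. This annihilates the divergent $\sim\sqrt n$ part of $g_2$ and leaves $O(1)$ coefficients. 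Combining this with the independence and mean‑zero property of the slab increments, with $\sum_u(P^{\omega}_{r_i\sqrt n}(X_{\lambda_l}=(u,l)))^2=P^{\omega}_{r_i\sqrt n,\,r_i\sqrt n}(X,\tilde X\text{ meet at level }l)\le1$ (and its analogue with $r_j$), and with $p^2\le p$/Cauchy--Schwarz for the various quenched probabilities in the cross terms, one obtains a constant $C$ depending only on the model with
\[
\mathbb E R_l^2\le C\Big[P_{r_i\sqrt n,\,r_i\sqrt n}(X,\tilde X\text{ meet at level }l)+P_{r_j\sqrt n,\,r_j\sqrt n}(X,\tilde X\text{ meet at level }l)\Big].
\]
Finally, $\sum_{l=0}^{n}P_{r_i\sqrt n,\,r_i\sqrt n}(X,\tilde X\text{ meet at level }l)\le E_{r_i\sqrt n,\,r_i\sqrt n}|X_{[0,n]}\cap\tilde X_{[0,n]}|\le C\sqrt n$ by (\ref{intup}) (or by Proposition~\ref{STEP 1} with $r_i=r_j$), and similarly for $r_j$, so $\tfrac1n\sum_{l=0}^{n}\mathbb E R_l^2\le C/\sqrt n\to0$.

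The hard part is the set‑up in the second and third paragraphs: representing the slab contribution as a bilinear form in the entrance distributions with kernel the $\overline q$‑walk Green's function, and exploiting the uniform regularity of that kernel — it is precisely the cancellation of the $\sim\sqrt n$ part of $g_2$ against the mass‑zero environment fluctuations that turns the naive $O(n)$ bound for $\sum_l\mathbb E R_l^2$ into the summable $O(l^{-1/2})$ one. The second‑order remainder in $R_l$ and the local‑CLT/Green's‑function error terms in $g_2$ then require routine but careful uniform estimates.
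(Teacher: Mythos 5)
Your proof takes essentially the same route as the paper's. Both isolate the slab-$l$ contribution to $R_l$, exploit the fact that each site increment $\omega_{(u,l),\cdot}-\mathbb E\,\omega_{(u,l),\cdot}$ has total mass zero (the paper implements this by Abel summation in $w$ and $z$; you implement it by observing that the pairing with $A$ sees only the oscillation of the kernel over a bounded window), invoke uniformly bounded increments of the meeting-count kernel, and then sum the surviving meeting probabilities via (\ref{intup}). The bookkeeping differs: the paper splits $R_l=R_{l,1}+R_{l,2}+R_{l,2}'$ according to whether $X$, $\tilde X$ hit level $l$ exactly or jump over it, whereas you write $R_l=G(\kappa_X,\kappa_{\tilde X})-\mathbb E[G\mid\mathcal F_l]$ and expand to first and second order in the slab fluctuations $\delta\kappa_X,\delta\kappa_{\tilde X}$. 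This is a cleaner packaging of the same cancellation, and your final bound in terms of $P_{r_i\sqrt n,\,r_i\sqrt n}+P_{r_j\sqrt n,\,r_j\sqrt n}$ is equivalent, up to Cauchy--Schwarz, to the paper's bound in terms of $P_{r_i\sqrt n,\,r_j\sqrt n}$.

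There is one genuine gap, and it sits at the crux. You assert that the uniform Lipschitz bound on $g_2$ (hence on $A,\tilde A$) follows from Spitzer's estimates on the potential kernel $\overline a$ of the $\overline q$-walk. It does not follow directly. The kernel $g_2(w,\tilde w)$ is $E_{w,\tilde w}(\phi_{n'})$, the Green function not of $\overline q$ on $\mathbb Z$ but of the auxiliary chain $Z_k=\big(\tilde X_{\tilde\lambda_k}-X_{\lambda_k},\,\min(X_{\lambda_k}\cdot e_2,\tilde X_{\tilde\lambda_k}\cdot e_2)-k\big)$ on $\mathbb Z\times\{-(K-1),\dots,K-1\}\times\{0,\dots,K-1\}$, which carries extra coordinates, is perturbed at the diagonal, and runs on the level clock rather than the common-level clock used to define $\overline q$. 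The representation $g_2\approx\beta^{-1}\overline G_m+O(1)$ does not give bounded increments of the $O(1)$ part for free, and Spitzer's one-dimensional theory does not transfer automatically across these gaps. The paper devotes Lemma \ref{2sup} to exactly this point, reducing via Lemma \ref{2sup1sup} to $y\in\{\pm e_1,\pm e_2\}$ and then comparing $G_n(z,0)$ for the $Z$-chain at neighbouring $z$ via irreducibility and finiteness of first-passage expectations. This is the single hardest ingredient in the proof of this proposition, not a ``routine uniform estimate''; once Lemma \ref{2sup} (or an equivalent) is supplied, your argument closes.
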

\begin{proof}  %Denote $\phi_n= \vert\{k: k \le n ,k \mbox{ is a meeting level } \} \vert$, the number of levels at which the two walks meet up to level $n$. 
 Notice that $R_l$ is $0$ unless one of the walks hit level $l$ because otherwise the event in question does not need $\overline{\omega}_{l}$. We then have $R_l=R_{l,1}+R_{l,2} + R_{l,2}^{'}$ where
%\[ R_{l,1} = \sum_{u\cdot e_2 =l} \hspace{0.2cm}\sum_{\tilde{u} \cdot e_2 =l} P_{r_i \sqrt{n}}^{\omega}(X_{\lambda_l}=u)P_{r_j \sqrt{n}}^{\omega}(\tilde{X}_{\tilde{\lambda}_l}=\tilde{u}) \sum_{\begin{tiny} \begin{array}{c}l < u_1\cdot e_2 \leq l+K,\\ |(u_1-u)\cdot e_1| \leq K \end{array}\end{tiny}} \hspace{0.2cm} \sum_{\begin{tiny}\begin{array}{c}l < \tilde{u}_1\cdot e_2 \leq l+K,\\ |(\tilde{u}_1-\tilde{u})\cdot e_1| \leq K \end{array}\end{tiny}}\hspace{2cm} \]
 %\[ \qquad \qquad \Big{\lbrace}\omega(u,u_1-u)\omega(\tilde{u},\tilde{u}_1-\tilde{u}) -\mathbb{E}[\omega(u,u_1-u)\omega(\tilde{u},\tilde{u}_1-\tilde{u})]\Big{\rbrace} 
%\cdot E_{u_1,\tilde{u}_1} (\phi_n), \]
\[ R_{l,1} = \sum_{u\cdot e_2 =l} \hspace{0.2cm}\sum_{\tilde{u} \cdot e_2 =l} P_{r_i \sqrt{n}}^{\omega}(X_{\lambda_l}=u)P_{r_j \sqrt{n}}^{\omega}(\tilde{X}_{\tilde{\lambda}_l}=\tilde{u})  \hspace{5cm}\] \[ \qquad \times \sum_{\begin{tiny}\begin{array}{c}1\le w\cdot e_2\mbox{, } z\cdot e_2  \leq K \\
 -K \le w\cdot e_1\mbox{, } z\cdot e_1  \leq K \end{array}\end{tiny} } E_{u+w,\tilde{u}+z} (\phi_n)  \cdot \Big{\lbrace}\omega(u,w)\omega(\tilde{u},z) -\mathbb{E}[\omega(u,w)\omega(\tilde{u},z)]\Big{\rbrace}  \]
\[ R_{l,2} = \sum_{u\cdot e_2 =l}\hspace{0.2cm} \sum_{\tilde{u} \cdot e_2 <l} \hspace{0.2cm} \sum_{\begin{tiny}\begin{array}{c}l < u_1\cdot e_2 \leq l+K,\\ |(u_1-u)\cdot e_1| \leq K\end{array}\end{tiny}}   \sum_{\begin{tiny} \begin{array}{c}l < \tilde{u}_1\cdot e_2 \leq l+K,\\ |(\tilde{u}_1-\tilde{u})\cdot e_1| \leq K \end{array} \end{tiny}}  P_{r_i \sqrt{n}}^{\omega}(X_{\lambda_l} =u)  \hspace{5cm}\]
\[\qquad \qquad \qquad  \qquad  \times  P_{r_j \sqrt{n}}^{\omega}(\tilde{X}_{\tilde{\lambda}_l}=\tilde{u}_1, \tilde{X}_{\tilde{\lambda}_l -1}=\tilde{u} )  \Big{\lbrace}\omega(u,u_1-u)-\mathbb{E}[\omega(u,u_1-u)] \Big{\rbrace} \cdot E_{u_1,\tilde{u}_1}(\phi_n), \]
and
\[ R_{l,2}^{'} = \sum_{u\cdot e_2 <l}\hspace{0.2cm} \sum_{\tilde{u} \cdot e_2 =l} \hspace{0.2cm} \sum_{\begin{tiny}\begin{array}{c}l < u_1\cdot e_2 \leq l+K,\\ |(u_1-u)\cdot e_1| \leq K\end{array}\end{tiny}}   \sum_{\begin{tiny} \begin{array}{c}l < \tilde{u}_1\cdot e_2 \leq l+K,\\ |(\tilde{u}_1-\tilde{u})\cdot e_1| \leq K \end{array} \end{tiny}}  P_{r_i \sqrt{n}}^{\omega}(X_{\lambda_{l}-1} =u, X_{\lambda_l}=u_1)  \hspace{5cm}\]
\[\qquad \qquad \qquad  \qquad  \times  P_{r_j \sqrt{n}}^{\omega}(\tilde{X}_{\tilde{\lambda}_l}=\tilde{u})  \Big{\lbrace}\omega(\tilde{u},\tilde{u}_1-\tilde{u})-\mathbb{E}[\omega(\tilde{u},\tilde{u}_1-\tilde{u})] \Big{\rbrace} E_{u_1,\tilde{u}_1}(\phi_n). \]
%Proposition \ref{prop1} follows from 
%\end{proof}
%\begin{proposition}
%\label{prop2}
%\[ \frac{1}{n} \sum_{l=1}^{n} \mathbb{E} R_{l,1}^2 \longrightarrow 0 \mbox{, } \frac{1}{n} \sum_{l=1}^{n} \mathbb{E} R_{l,2}^2 \longrightarrow 0 \mbox{ and }  \frac{1}{n} \sum_{l=1}^{n} \mathbb{E} R_{l,2}^2 \longrightarrow 0.\] 
%\end{proposition}
%\begin{proof}
%We write $R_{l,1}$ as
%\[ R_{l,1} = \sum_{u\cdot e_2 =l} \hspace{0.2cm}\sum_{\tilde{u} \cdot e_2 =l} P_{r_i \sqrt{n}}^{\omega}(X_{\lambda_l}=u)P_{r_j \sqrt{n}}^{\omega}(\tilde{X}_{\tilde{\lambda}_l}=\tilde{u}) \times \hspace{4cm}\] \[ \qquad \sum_{\begin{tiny}\begin{array}{c}1\le w\cdot e_2\mbox{, } z\cdot e_2  \leq K \\
% -K \le w\cdot e_1\mbox{, } z\cdot e_1  \leq K \end{array}\end{tiny} } E_{u+w,\tilde{u}+z} (\phi_n)  \cdot \Big{\lbrace}\omega(u,w)\omega(\tilde{u},z) -\mathbb{E}[\omega(u,w)\omega(\tilde{u},z)]\Big{\rbrace}  \]
We first work with $R_{l,1}$. Let us order the $z$'s as shown in the Figure $1$ below. Here $\beta \leq z$ if $\beta$ occurs before $z$ in the sequence and $z+1$ is the next element in the sequence. Also, without loss of generality, we have assumed $(K,K)$ is the last $z$.\\

\begin{center} \includegraphics[height=2in]{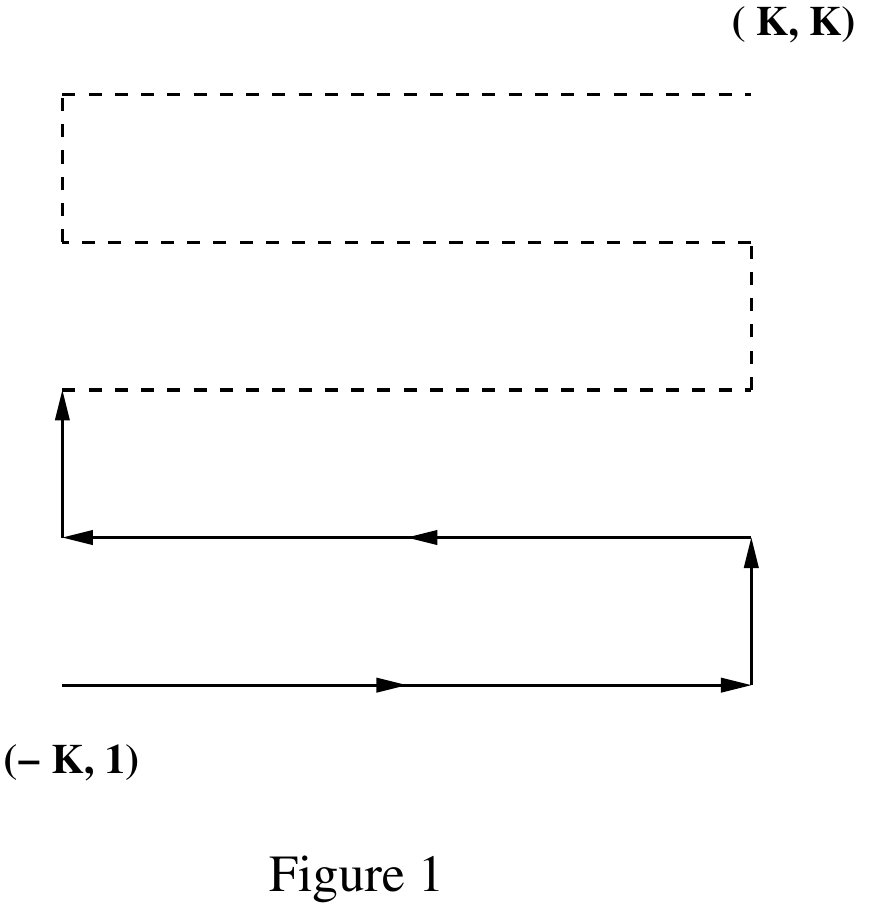} \end{center}
%\pagebreak
Using summation by parts on $z$, we get
\[\sum_{w,z} E_{u+w, \tilde{u}+z} (\phi_n) 
   \Big{[} \omega(u,w)\omega(\tilde{u},z)-
     \mathbb{E}\big{\lbrace}\omega(u,w)\omega(\tilde{u},z)\big{\rbrace}\Big{]}
\hspace{3cm}\]
\[ = \sum_{w} \bigg{[}\bigg{(} \omega(u,w)\sum_{z}\omega(\tilde{u},z) - \mathbb{E} \Big{\lbrace} \omega(u,w)\sum_{z}\omega(\tilde{u},z)\Big{\rbrace} \bigg{)} \cdot E_{u+w,\tilde{u}+(K,K)}(\phi_n) \bigg{]} \]
\[ - \sum_{w} \bigg{[} \sum_{  \begin{tiny} \begin{array}{c}1\le  z\cdot e_2  \leq K \\
 -K \le  z\cdot e_1  \leq K \\ z \ne (K,K) \end{array}\end{tiny}} \Big{\lbrace} \sum_{\beta \leq z} \omega(u,w)\omega(\tilde{u}, \beta ) -\mathbb{E}\Big{(}\sum_{\beta \leq z} \omega(u,w)\omega(\tilde{u}, \beta )\Big{)}  \Big{\rbrace} \]
\[ \hspace{2cm} \qquad \qquad \qquad  \times \Big{\lbrace}E_{u+w,\tilde{u}+z+1}(\phi_n) - E_{u+w,\tilde{u}+z}(\phi_n )
  \Big{\rbrace}  \bigg{]}.
\]
%Here $\beta \leq z$ if $\beta$ occurs before $z$ in the sequence and $z+1$ is the next element in the sequence. Also, without loss of generality, we have assumed $(K,K)$ is the last $z$. The picture below indicates how the $z$'s are ordered.\\
% \begin{center} \includegraphics[height=1in]{sum.pdf} \end{center}
Write $R_{l,1}= R_{l,1,1}-R_{l,1,2}$, after splitting $\sum_{w,z}$ into the two $\sum_w$ terms above. Similarly, we can show 
\[ R_{l,2}= \sum_{u \cdot e_2=l}\,\, \sum_{ l-K+1 \leq \tilde{u} \cdot e_2 \leq l-1} \,\,\sum_{ z \cdot e_2 \geq l- \tilde{u} \cdot e_2 +1 } P_{r_i \sqrt{n}}^{\omega}(X_{\lambda_l }=u)P_{r_j \sqrt{n}}^{\omega}(\tilde{X} \mbox{ hits } \tilde{u})\omega(\tilde{u},z) \]              
\[\qquad \qquad  \sum_{ \begin{tiny} \begin{array}{c}1\le  w\cdot e_2  \leq K \\
 -K \le  w\cdot e_1  \leq K \\ w \ne (K,K) \end{array}\end{tiny} }\Big{\lbrace} \sum_{\nu \leq w}\big{[}\omega(u,\nu) - \mathbb{E} \omega(u,\nu)\big{]}   \Big{\rbrace} \cdot \Big{\lbrace}E_{u+w,\tilde{u}+z}(\phi_n) - E_{u+w+1,\tilde{u}+z}(\phi_n) \Big{\rbrace}.\]
Do the same for $R_{l,2}^{'}$. Proposition \ref{prop1} will be proved if we can show the following.
\end{proof}

\bigskip

\begin{proposition}
\label{prop3}
\[ \frac{1}{n} \sum_{l=0}^n \mathbb{E}(R_{l,1,1}^2) \longrightarrow 0 \;\; \mbox{ and  } \;\;\frac{1}{n} \sum_{l=0}^n \mathbb{E}(R_{l,1,2}^2) \longrightarrow 0 .\]
Also
\[ \frac{1}{n}  \sum_{l=0}^n \mathbb{E}(R_{l,2}^2) \longrightarrow 0 \;\;\mbox{ and }\;\; \frac{1}{n}  \sum_{l=0}^n \mathbb{E}({R_{l,2}^{'}}^2) \longrightarrow 0.\]
\end{proposition}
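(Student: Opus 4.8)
The plan is to reduce all four quantities to a single normal form and then run one uniform second‑moment estimate. First I would put every term into a ``discrete‑gradient form'': $R_{l,1,2}$, $R_{l,2}$ and $R_{l,2}'$ are already displayed so that the mean‑zero environment factor is paired against a \emph{unit increment} $E_{u+w,\tilde u+z+1}(\phi_n)-E_{u+w,\tilde u+z}(\phi_n)$ (or the analogous increment in the first argument). For $R_{l,1,1}$ the same is achieved by one further summation by parts in the variable $w$ (ordered as the $z$'s in Figure~1): since the allowed steps exhaust the support of $\omega_{u,\cdot}$ we have $\sum_w(\omega(u,w)-\mathbb{E}\omega(u,w))=0$, so the boundary term vanishes identically and one is left with an expression of the same shape, with increments $E_{u+(w+1),\tilde u+(K,K)}(\phi_n)-E_{u+w,\tilde u+(K,K)}(\phi_n)$. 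After this step each of the four is of the schematic form $R_l=\sum_{u,\tilde u}A^\omega_l(u)\,B^\omega_l(\tilde u)\,\zeta_l(u,\tilde u)$, where $A^\omega_l(u)$ and $B^\omega_l(\tilde u)$ are $\mathcal{F}_l$‑measurable products of quenched hitting probabilities (and, in the $R_{l,2}$ case, a factor $\omega(\tilde u,z)$ coming from a level below $l$), while $\zeta_l(u,\tilde u)$ is a sum over boundedly many pairs $(w,z)$ of a bounded, $\mathbb{E}$‑centered function $\psi_{u,\tilde u}(w,z)$ of $\overline\omega_l$ depending only on the sites $u,\tilde u$, times a unit increment of $E_{\cdot,\cdot}(\phi_n)$.

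Next I would record the two inputs the argument rests on. The first is the Green‑function bound \eqref{intup}: the walk gains at least one level per step, so $\lambda_n\le n$ and distinct meeting levels up to level $n$ correspond to distinct points of $X_{[0,n]}\cap\tilde X_{[0,n]}$; hence $E_{x,y}(\phi_n)\le C\sqrt n$ for all $x,y,n$, equivalently $\sum_{l=0}^n P_{x,y}(\text{level }l\text{ is a meeting level})\le C\sqrt n$. The second is a uniform discrete‑gradient bound $\sup_{x,y,n}|E_{x+e,y}(\phi_n)-E_{x,y}(\phi_n)|\le C$ and likewise in the second slot, for $|e|\le K$. This is where the potential‑kernel analysis of Section~3.2 is used: after a renewal decomposition over the first common level, $E_{x,y}(\phi_n)$ is $\beta^{-1}$ times the Green function $G_{[\,\cdot\,]}(y-x,0)$ of the averaged walk $Y$, and the asymptotics $\overline a(v+1)-\overline a(v)\to\pm\,\overline\sigma^{-2}$ from Spitzer, together with $\sup_x\frac1{\sqrt n}|\beta G_n(x,0)-\overline G_n(x,0)|\to 0$ from \cite{brs}, give bounded differences uniformly in the point and in $n$.

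With these in hand the second‑moment bound is routine. Since $\psi_{u,\tilde u}(w,z)$ is a function of $\overline\omega_l$ alone while $A^\omega_l(u)B^\omega_l(\tilde u)$ is $\mathcal{F}_l$‑measurable and the increments are deterministic, $\mathbb{E}(R_l^2)=\sum_{(u,\tilde u),(u',\tilde u')}\mathbb{E}[A^\omega_l(u)B^\omega_l(\tilde u)A^\omega_l(u')B^\omega_l(\tilde u')]\,\mathbb{E}[\zeta_l(u,\tilde u)\zeta_l(u',\tilde u')]$; by i.i.d.\ of $\mathbb{P}$ the last factor vanishes unless $\{u,\tilde u\}\cap\{u',\tilde u'\}\ne\emptyset$, and when it does not vanish it is $\le C$ by boundedness of $\psi$, of $\omega$, and of the (finitely many) increments. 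Summing over the finitely many coincidence patterns, each surviving contribution, after collapsing the remaining free sum over $\tilde u$ or $u'$ (using $\sum_{\tilde u}B^\omega_l(\tilde u)\le C$ and $\sum_{u'}A^\omega_l(u')\le 1$), is bounded by a constant multiple of $\sum_u\mathbb{E}\bigl[P^\omega_{r}(X_{\lambda_l}=u)\,P^\omega_{r'}(\tilde X_{\tilde\lambda_l}=u)\bigr]=P_{r,r'}(\text{level }l\text{ is a meeting level})$ for some $r,r'\in\{r_i\sqrt n,r_j\sqrt n\}$. Hence $\frac1n\sum_{l=0}^n\mathbb{E}(R_l^2)\le\frac Cn\sum_{l=0}^n P_{r,r'}(\text{level }l\text{ is a meeting level})\le\frac{C\sqrt n}{n}\to0$, and applying this to $R_{l,1,1},R_{l,1,2},R_{l,2},R_{l,2}'$ proves the proposition.

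The step I expect to be the real obstacle is the uniform discrete‑gradient estimate for $E_{x,y}(\phi_n)$: one needs it not only for horizontal unit shifts but also for vertical ones, without assuming the two starting points lie on a common level, and with the perturbation of the $Y$‑walk at the origin taken into account. Everything else is bookkeeping; this bound is the genuine analytic content and is obtained from the potential‑kernel asymptotics and the comparison of $G_n$ with $\overline G_n$ exactly as in \cite{brs}.
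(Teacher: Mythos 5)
Your reduction of Proposition~\ref{prop3} to (i) the $\sqrt n$ collision bound~(\ref{intup}) and (ii) a uniform discrete-gradient bound on $E_{x,y}(\phi_n)$ is the paper's argument: summation by parts (using $\sum_w(\omega(u,w)-\mathbb E\omega(u,w))=0$ to kill the boundary term) puts every $R_{l,\cdot}$ into gradient form, the $\mathbb P$-expectation of the square collapses to the diagonal by independence of the environment across sites on level $l$, and after summing out the remaining free variables and using the boundedness of the increments one lands on $C\,P_{\cdot,\cdot}(\text{meeting at level }l)$, whose Ces\`aro average is $O(n^{-1/2})$. As a proof of the proposition, modulo citing the gradient bound, this is correct and essentially identical to the paper's.

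Where you go wrong is in your account of how that gradient bound --- the paper's Lemma~\ref{2sup}, which you rightly flag as the real analytic content --- is obtained. Deducing it from $\overline a(v+1)-\overline a(v)\to\pm\overline\sigma^{-2}$ and the comparison $\sup_x n^{-1/2}|\beta G_n(x,0)-\overline G_n(x,0)|\to 0$ of \cite{brs} does not work: that comparison only controls the discrepancy to $o(\sqrt n)$, which after differencing still allows errors of order $o(\sqrt n)$, not $O(1)$; it gives no uniform-in-$n$ bound on $|G_n(x+1,0)-G_n(x,0)|$. Moreover, $E_{x,y}(\phi_n)$ is not a one-dimensional Green function of $Y$ evaluated at $y-x$: in $R_{l,2}$ and $R_{l,2}'$ the two arguments $u+w$ and $\tilde u+z$ sit on different levels, so a renewal step to the first common level carries an extra bounded ``height offset'' coordinate that a one-dimensional $G_n(y-x,0)$ cannot see. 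The paper handles this by first reducing to differences at the origin (Lemma~\ref{2sup1sup}) and then treating $Z_k=\big(\tilde X_{\tilde\lambda_k}-X_{\lambda_k},\,\min(X_{\lambda_k}\cdot e_2,\tilde X_{\tilde\lambda_k}\cdot e_2)-k\big)$ as an irreducible Markov chain on $\big(\mathbb Z\times\{-(K-1),\dots,K-1\}\big)\times\{0,\dots,K-1\}$; the uniform bound on $|G_n(z,w)-G_n(z',w)|$ then follows from finiteness of the expected number of visits to $w$ before first hitting $z'$ (and vice versa). If you simply invoke Lemma~\ref{2sup} as a black box your proof of Proposition~\ref{prop3} stands, but the sketch you give of its proof should be replaced by this Markov-chain argument.
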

\begin{proof}Let us show the first statement of the proposition. Since $\sum_z \omega(\tilde{u},z) = 1$, we have
\[R_{l,1,1}^2= \sum_{u,v}\sum_{\tilde{u},\tilde{v}}P_{r_i \sqrt{n}}^{\omega}(X_{\lambda_l}=u)P_{r_j \sqrt{n}}^{\omega}(\tilde{X}_{\tilde{\lambda}_l}=\tilde{u})P_{r_i \sqrt{n}}^{\omega}(X_{\lambda_l}=v)P_{r_j \sqrt{n}} ^{\omega}(\tilde{X}_{\tilde{\lambda}_l}=\tilde{v})\hspace{4cm}\]
\[ \hspace{1cm}\times\sum_{w,r}\Big{[}\Big{(} \omega(u,w) - \mathbb{E}\omega(u,w)\Big{)}\cdot \Big{(} \omega(v,r) - \mathbb{E}\omega(v,r)\Big{)} \] \[ \hspace{9cm} \cdot E_{u+w,\tilde{u}+(K,K)}(\phi_n)\cdot E_{v+r,\tilde{v}+(K,K)}(\phi_n)\Big{]}.\]
Using summation by parts separately for $w$ and $r$, we get the sum after the $\times$ is 
%\pagebreak
\[\sum_{  \begin{tiny} \begin{array}{c}1\le  w\cdot e_2, r\cdot e_1  \leq K \\
 -K \le  w\cdot e_1 , r\cdot e_1  \leq K  \end{array}\end{tiny}} \sum_{\alpha \le w} \Big{[}\omega(u,\alpha)- \mathbb{E}\omega(u,\alpha)\Big{]}\cdot \Big{[} E_{u+w,\tilde{u}+(K,K)}(\phi_n)-E_{u+w+1,\tilde{u}+(K,K)}(\phi_n)\Big{]}\]
\[ \hspace{2cm} \times  \sum_{\beta \le r} \Big{[}\omega(v,\beta)- \mathbb{E}\omega(v,\beta)\Big{]}\cdot \Big{[} E_{v+r,\tilde{v}+(K,K)}(\phi_n)-E_{v+r+1,\tilde{v}+(K,K)}(\phi_n)\Big{]}.\]
When we take $\mathbb{E}$ expectation in the above expression, we get $0$ unless $u=v$. Also using Lemma \ref{2sup} below, we have that $E_{u+w,\tilde{u}+(K,K)}(\phi_n)-E_{u+w+1,\tilde{u}+(K,K)}(\phi_n)$ and 
$E_{v+r,\tilde{v}+(K,K)}(\phi_n)-E_{v+r+1,\tilde{v}+(K,K)}(\phi_n)$ are bounded. These observations give us that 
\[ \mathbb{E}(R_{l,1,1}^2) \le C P_{r_i \sqrt{n},r_j \sqrt{n}}(X_{\lambda_l}= \tilde{X}_{\tilde{\lambda}_l} \mbox{ and both walks hit level } l). \]
From computations on Green functions in the previous section (eqns. (\ref{eqn7}) and (\ref{green}) ), we have $\frac{1}{n} \sum_{l=0}^n \mathbb{E}(R_{l,1,1}^2) \to 0$. Let us now show \begin{math} \frac{1}{n}\sum_{l=0}^n \mathbb{E} R_{l,2}^2  \to 0 \end{math}. Now
\[\mathbb{E}R_{l,2}^2=\sum_{u\cdot e_2=v\cdot e_2=l} \sum_{\begin{tiny}\begin{array}{c}l-K+1\le \tilde{u}\cdot e_2,\\ \tilde{v}\cdot e_2 \le l-1\end{array}\end{tiny}} \sum_{\begin{tiny}\begin{array}{c}z\cdot e_2,z'\cdot e_2 \ge \\ l-\tilde{u}\cdot e_2+1\end{array}\end{tiny}} \mathbb{E}\Big{\{}P_{r_i\sqrt{n}}^{\omega}(X_{\lambda_l}=u)P_{r_j\sqrt{n}}^{\omega}(\tilde{X} \mbox{ hits }\tilde{u})\omega(\tilde{u},z)\]
\[\hspace{6cm}  P_{r_i\sqrt{n}}^{\omega}(X_{\lambda_l}=v)P_{r_j\sqrt{n}}^{\omega}(\tilde{X} \mbox{ hits }\tilde{v})\omega(\tilde{v},z')  \Big{\}}\]
\[ \hspace{2cm} \times \sum_{  \begin{tiny} \begin{array}{c}1\le  w\cdot e_2  \leq K \\
 -K \le  w\cdot e_1  \leq K \\ w \ne (K,K) \end{array}\end{tiny}} \sum_{  \begin{tiny} \begin{array}{c}1\le  w'\cdot e_2  \leq K \\
 -K \le  w'\cdot e_1  \leq K \\ w' \ne (K,K) \end{array}\end{tiny}} \mathbb{E}\Big{\{} \sum_{\alpha \le w} [\omega(u,\alpha)-\mathbb{E}\omega(u,\alpha)]\sum_{\alpha' \le  w'}[\omega(v,\alpha')-\mathbb{E}\omega(v,\alpha')]\Big{\}} \]
\[ \hspace{3cm}\times \Big{\{} E_{u+w,\tilde{u}+z}(\phi_n)-E_{u+w+1,\tilde{u}+z}(\phi_n)\Big{\}}\cdot\Big{\{} E_{v+w',\tilde{v}+z'}(\phi_n)-E_{v+w'+1,\tilde{v}+z'}(\phi_n)\Big{\}}.\]
%The second and third claims of the proposition can be proved similarly.
By observing that the expression on the third line of the above equation is zero unless $u=v$ and by using Lemma \ref{2sup} below, it is clear that 
\[ \mathbb{E}R_{l,2}^2 \le C P_{r_i \sqrt{n},r_j \sqrt{n}}(X_{\lambda_l}=\tilde{X}_{\tilde{\lambda}_l} \mbox{ and both walks hit level }l ) \]
and it follows that \begin{math} \frac{1}{n}\sum_{l=0}^n \mathbb{E}R_{l,2}^2 \to 0.\end{math} The remaining parts of the proposition can be similarly proved. This completes the proof of Proposition \ref{prop3} and hence Proposition \ref{STEP 2}
\end{proof}

We have thus shown 
\[ \sum_{k=1}^{[ns]} E\Big{[} \Big{\lbrace} \sum_{i=1}^N\theta_i\Big{(}\frac{M_k^{n,i}-M_{k-1}^{n,i } }{n^{\frac{1}{4}} } \Big{)} \Big{\rbrace} \Big{\lbrace} \sum_{i=1}^N \theta_i\Big{(} \frac{M_k^{n,i}-M_{k-1}^{n,i} } {n^{\frac{1}{4}} }\Big{)}^T\Big{\rbrace}  \Big{\vert} \mathcal{F}_{k-1}\Big{]} \rightarrow h(s)\Gamma\]
where $h(s)$ is as in (\ref{h}). From the left hand side of the above expression, we can conclude that $h(s)$ is nondecreasing. For if $h(s)>h(t)$ for some $s<t$, we have 
\[ \Big(h(t)-h(s)\Big)\Gamma = \lim \limits_{n \to \infty}\sum_{k=[ns]}^{[nt]} E\Big{[} \Big{\lbrace} \sum_{i=1}^N\theta_i\Big{(}\frac{M_k^{n,i}-M_{k-1}^{n,i } }{n^{\frac{1}{4}} } \Big{)} \Big{\rbrace} \Big{\lbrace} \sum_{i=1}^N \theta_i\Big{(} \frac{M_k^{n,i}-M_{k-1}^{n,i} } {n^{\frac{1}{4}} }\Big{)}^T\Big{\rbrace}  \Big{\vert} \mathcal{F}_{k-1}\Big{]}.\]
The left hand side is a nonpositive definite matrix whereas the right hand side is nonnegative definite.
We show that $h$ is H\"older continuous.

\bigskip

\begin{lemma}The function
 \[f(t)=\sum_{i=1}^N\sum_{j=1}^N \theta_i \theta_j \sqrt{t} \int_{0}^{\frac{\overline{\sigma}^2}{c_1}} \frac{1}{\sqrt{2\pi v}}\exp\big{(} - \frac{(r_i-r_j)^2}{2tv}\big{)} dv  \]
has bounded derivative on $(0,1]$ and is continuous on $[0,1]$.
\end{lemma}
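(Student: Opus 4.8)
The plan is to strip the $\sqrt{t}$ prefactor off each summand by a change of variables, reducing $f$ to a linear combination of integrals with a variable upper limit, to which the fundamental theorem of calculus applies directly. Substituting $w=tv$ turns $\sqrt{t}\int_{0}^{\overline{\sigma}^2/c_1}\frac{1}{\sqrt{2\pi v}}\exp(-\frac{(r_i-r_j)^2}{2tv})\,dv$ into $\int_{0}^{\overline{\sigma}^2 t/c_1}\frac{1}{\sqrt{2\pi w}}\exp(-\frac{(r_i-r_j)^2}{2w})\,dw$, so that
\[
 f(t)\;=\;\sum_{i=1}^N\sum_{j=1}^N \theta_i\theta_j\,\Phi_{ij}\!\Big(\tfrac{\overline{\sigma}^2}{c_1}\,t\Big),\qquad \Phi_{ij}(a)\;=\;\int_{0}^{a}\frac{1}{\sqrt{2\pi w}}\exp\!\Big(-\frac{(r_i-r_j)^2}{2w}\Big)\,dw .
\]
The integrand defining $\Phi_{ij}$ is nonnegative and integrable near $w=0$ (it is $O(w^{-1/2})$ when $r_i=r_j$ and tends to $0$ when $r_i\ne r_j$), so each $\Phi_{ij}$ is continuous on $[0,\infty)$ with $\Phi_{ij}(0)=0$ and is $C^{1}$ on $(0,\infty)$. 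Consequently $f$ is a finite linear combination of functions continuous on $[0,1]$ and $C^{1}$ on $(0,1]$; in particular $f$ is continuous on $[0,1]$ with $f(0)=0$, which settles the continuity assertion.

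For the derivative on $(0,1]$, the fundamental theorem of calculus gives $\Phi_{ij}'(a)=\frac{1}{\sqrt{2\pi a}}\exp(-\frac{(r_i-r_j)^2}{2a})$, so that
\[
 f'(t)\;=\;\frac{\overline{\sigma}^2}{c_1}\sum_{i=1}^N\sum_{j=1}^N \theta_i\theta_j\,\frac{1}{\sqrt{2\pi\,\overline{\sigma}^2 t/c_1}}\,\exp\!\Big(-\frac{(r_i-r_j)^2 c_1}{2\overline{\sigma}^2 t}\Big).
\]
Separate the sum into its off-diagonal part ($i\ne j$) and its diagonal part ($i=j$). For $i\ne j$ we have $|r_i-r_j|>0$ because $r_1<\cdots<r_N$, and the elementary one-variable inequality $\sup_{a>0} a^{-1/2}e^{-b/a}=(2be)^{-1/2}$ (the maximum attained at $a=2b$), applied with $b=(r_i-r_j)^2/2$, shows that each off-diagonal summand of $f'$ is bounded on $(0,1]$ by a constant depending only on $\overline{\sigma},c_1$ and $|r_i-r_j|$; summing the finitely many off-diagonal terms bounds the off-diagonal part of $f'$ on all of $(0,1]$. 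The mechanism is that the factor $\exp(-c/t)$ with $c>0$ decays faster than any power as $t\downarrow0$ and absorbs the $t^{-1/2}$ produced by differentiating $\sqrt{t}$.

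The diagonal terms are the delicate step and the main obstacle here. For $i=j$ the exponential equals $1$, so the $i=j$ part of $f$ is the explicit function $\big(\sum_i\theta_i^{2}\big)\frac{2\overline{\sigma}}{\sqrt{2\pi c_1}}\sqrt{t}$, a nonnegative constant times $\sqrt{t}$, whose contribution to $f'(t)$ is a constant times $t^{-1/2}$ — bounded on each interval $[\varepsilon,1]$, $\varepsilon>0$, but not uniformly on $(0,1]$. I would therefore isolate this term: write $f=f_{\mathrm{diag}}+f_{\mathrm{off}}$ with $f_{\mathrm{diag}}(t)=\big(\sum_i\theta_i^{2}\big)\frac{2\overline{\sigma}}{\sqrt{2\pi c_1}}\sqrt{t}$. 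Then $f_{\mathrm{off}}$ has derivative bounded on all of $(0,1]$ by the previous paragraph and is continuous on $[0,1]$, hence Lipschitz on $[0,1]$ by the mean value theorem together with continuity at $0$; and $f_{\mathrm{diag}}$ is continuous on $[0,1]$ and $\tfrac12$-Hölder there by $|\sqrt{s}-\sqrt{t}|\le\sqrt{|s-t|}$. It follows that $f$, and so $h=f/(\beta\overline{\sigma}^2)$, is $\tfrac12$-Hölder continuous on $[0,1]$ — which is precisely the regularity of $h$ that Lemma~\ref{MCLT} requires; the phrase ``bounded derivative on $(0,1]$'' is to be read as referring to the part $f_{\mathrm{off}}$ remaining after the explicit $\sqrt{t}$ diagonal term is peeled off (equivalently, $f'$ is bounded on every $[\varepsilon,1]$). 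Apart from this, the only mildly technical point is the uniform-in-$(i,j)$ bookkeeping for the off-diagonal exponential bound; the rest is one change of variables and one application of the fundamental theorem of calculus.
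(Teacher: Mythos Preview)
Your change of variables $w=tv$ is a cleaner route than the paper's. The paper differentiates the product $\sqrt{t}\cdot\int_0^{\overline\sigma^2/c_1}(\ldots)\,dv$ directly via the product rule, obtaining two integral terms and then arguing each is bounded on $(0,1]$ when $B=(r_i-r_j)^2>0$; your substitution collapses each summand to a single antiderivative $\Phi_{ij}$ evaluated at $\overline\sigma^2 t/c_1$, so boundedness of the off-diagonal derivative reduces to the elementary estimate $\sup_{a>0}a^{-1/2}e^{-b/a}<\infty$ for $b>0$. Both arguments are valid for $i\ne j$; yours is shorter and makes the mechanism more transparent.

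More importantly, you are right to isolate the diagonal terms. When $i=j$ the summand is exactly $\theta_i^2\cdot\frac{2\overline\sigma}{\sqrt{2\pi c_1}}\sqrt{t}$, whose derivative is a positive multiple of $t^{-1/2}$ and is \emph{not} bounded on $(0,1]$ unless all $\theta_i$ vanish. The paper's proof only treats the case $B>0$ explicitly and passes over this point, so the lemma as literally stated is slightly inaccurate. Your resolution---split $f=f_{\mathrm{diag}}+f_{\mathrm{off}}$, observe that $f_{\mathrm{off}}$ has bounded derivative and is hence Lipschitz on $[0,1]$, while $f_{\mathrm{diag}}$ is $\tfrac12$-H\"older via $|\sqrt{s}-\sqrt{t}|\le\sqrt{|s-t|}$, so $f$ is $\tfrac12$-H\"older on $[0,1]$---is exactly the right fix, and $\alpha$-H\"older continuity is precisely the regularity Lemma~\ref{MCLT} asks of $h$. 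Your argument is correct and in fact closes a small gap the paper's own proof leaves open.
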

\begin{proof} Note that \begin{math}f(0)=\lim \limits_{ t \downarrow 0} f(t) = 0 \mbox{ and } f(1)=\sum_{i=1}^N\sum_{j=1}^N \theta_i \theta_j \int_{0}^{\frac{\overline{\sigma}^2}{c_1}} \frac{1}{\sqrt{2 \pi v}}\exp\big{(} -\frac{(r_i-r_j)^2}{2v}\big{)}dv.\end{math} For $0<s<t$, we have 
\[ f(t)-f(s) \leq C(\sum \theta_i)^2 \max_{i,j} \Big{\lbrace} \sqrt{t}\int_0^{\frac{\overline{\sigma}^2}{c_1}} \frac{1}{\sqrt{2 \pi v}} \exp\big{(} - \frac{(r_i-r_j)^2}{2tv}\big{)}dv\]
\[\hspace{6cm} - \sqrt{s}\int_0^{\frac{\overline{\sigma}^2}{c_1}}\frac{1}{\sqrt{2 \pi v}} \exp\big{(} - \frac{(r_i-r_j)^2}{2sv}\big{)}dv   \Big{\rbrace} . \]
Now for $B>0$ and $0<s<t$
\[\Big{\vert}\sqrt{t}\int_0^{\frac{\overline{\sigma}^2}{c_1}} \frac{1}{\sqrt{2 \pi v}} \exp\big{(} - \frac{B}{2tv}\big{)}dv - \sqrt{s}\int_0^{\frac{\overline{\sigma}^2}{c_1}}\frac{1}{\sqrt{2 \pi v}} \exp\big{(} - \frac{B}{2sv}\big{)}dv\Big{\vert}  \]
\[= \Big[\frac{1}{2\sqrt{u}}\int_0^{\frac{\overline{\sigma}^2}{c_1}} \frac{1}{\sqrt{2\pi v}} \exp\big( -\frac{B}{2uv} \big)dv +\frac{B}{2u^{\frac{3}{2}}}\int_0^{\frac{\overline{\sigma}^2}{c_1}} \frac{1}{v^{\frac{3}{2}}\sqrt{2 \pi}} \exp\big( -\frac{B}{2uv} \big)dv \Big] (t-s)\]
%\[ \vert \sqrt{t} \exp\big{(} - \frac{B}{t}\big{)}-\sqrt{s} \exp\big{(} - \frac{B}{s}\big{)} \vert  = \Big{(} \frac{1}{2\sqrt{u}} \exp(-\frac{B}{u}) + \frac{B}{u^{\frac{3}{2}}} \exp(- \frac{B}{u})\Big{)}(t-s) \]
for some $s \leq u\leq t$. Since the right hand side of the above equation  is bounded for $0< u \leq 1$, we are done .
\end{proof} 
%\vspace{0.2cm}
Theorem \ref{theorem} is now proved except for 

\bigskip

\begin{lemma}
\label{2sup}
\[ \sup_n \sup_{u \in \mathbb{Z}^2} \sup_{ y \in \lbrace e_1,e_2 \rbrace }\Big{\vert}E_{0,u}(\phi_n ) 
- E_{0,u+y}(\phi_n ) \Big{\vert}  < \infty.\]
\end{lemma}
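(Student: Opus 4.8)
The quantity $\phi_n$ counts meeting levels up to level $n$ for two walks in a common environment started at specified points. The plan is to bound the difference $E_{0,u}(\phi_n) - E_{0,u+y}(\phi_n)$ uniformly by showing that the two pairs of walks can be coupled so that they agree after a time with exponentially small tails, and that between now and coalescence only finitely many meeting levels (in expectation) are contributed by the discrepancy. First I would write $\phi_n = \sum_{l=0}^n \mathbf{1}\{X_{\lambda_l} = \tilde X_{\tilde\lambda_l}, \text{ both walks hit level } l\}$ and express the difference as $\sum_l \big(P_{0,u}(\text{meet at level } l) - P_{0,u+y}(\text{meet at level } l)\big)$. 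One of the two walks is common to both configurations (the one started at $0$); the other starts at $u$ versus $u+y$. I would couple the $u$- and $(u+y)$-walks in the same environment so that their horizontal separation, which is a mean-zero finite-range random walk perturbed at $0$ (this is precisely the $\overline q$/$q$ structure already analyzed in Section 3.2), is run until it first hits $0$; from that level onward the two walks can be made to coincide. Call $\sigma$ that coalescence level.

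The key estimate is then: the difference $|E_{0,u}(\phi_n) - E_{0,u+y}(\phi_n)|$ is bounded by the expected number of meeting levels occurring at or below level $\sigma$ in either configuration, which by a strong Markov / last-exit decomposition is at most a constant times $\sup_{v} E_{0,v}(\text{number of meetings before the separation hits }0)$. Because the separation process from a bounded (here unit) starting displacement is a recurrent planar-type problem, I would control this using the already-established inputs: the Green's function bound $E_{x,y}(|X_{[0,n]}\cap \tilde X_{[0,n]}|) \le C\sqrt n$ from \eqref{intup}, together with the fact (Lemma \ref{expL}, Lemma \ref{LDP}) that levels are crossed at linear speed with exponential control, so that ``number of meetings'' and ``$|X_{[0,m]}\cap\tilde X_{[0,m]}|$'' are comparable up to the relevant times. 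The point is that the expected number of meetings of the two independent-environment-separation walk with $0$ before coalescence is finite and bounded uniformly in $u$ and in $n$: a walk started one unit away from $0$ returns to $0$ only finitely often in expectation before ``escaping to infinity at rate $\sqrt{m}$'', which is exactly the content of the transience-like $C\sqrt n$ bound on the intersection local time. This gives a bound independent of $u$, $n$ and $y \in \{e_1, e_2\}$.

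The main obstacle I expect is making the coupling of the two $u$-started walks in the \emph{same fixed environment} precise enough that coalescence is guaranteed once their separation process hits $0$, while still keeping the contribution of meetings before coalescence expressible in terms of quantities already bounded. In the quenched picture the two walks do not become literally equal when they meet once, since the forbidden-direction structure means they can separate again; so one must argue that the $e_1$-difference is itself a martingale (as already observed for $Y_k$ in Section 3.2) and use the intersection-local-time bound \eqref{intup} at the level of this difference rather than a genuine coalescence. I would therefore phrase the final step as: $|E_{0,u}(\phi_n)-E_{0,u+y}(\phi_n)| \le \sup_m E\big(\#\{k\le m: \text{the two } u\text{- and }(u+y)\text{-separation walks occupy the same site}\}\big) \le C$ by \eqref{intup} and Lemma \ref{LDP}, uniformly in all parameters, which is the desired bound.
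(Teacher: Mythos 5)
Your plan has the right flavor (a coupling/cancellation intuition), but it has two concrete gaps and ends with an inequality that cannot be correct as stated.

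First, the final claimed bound
\[
|E_{0,u}(\phi_n)-E_{0,u+y}(\phi_n)| \le \sup_m E\big(\#\{k\le m: \text{separation walks occupy the same site}\}\big) \le C
\]
misuses the intersection estimate (\ref{intup}). That estimate gives $E_{x,y}(|X_{[0,n]}\cap\tilde X_{[0,n]}|)\le C\sqrt n$, a bound that \emph{grows} with $n$; it does not give a uniform-in-$n$ constant for the number of co-occupancies or returns to zero of a recurrent one-dimensional separation walk. Indeed the number of returns of such a walk to the origin in $n$ steps is of order $\sqrt n$, and if you let the two $\tilde X$-walks genuinely coalesce the co-occupancy count is even larger (linear in $n$). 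What is actually bounded is not a ``visit count in time $n$'' but an ``excursion count before a fixed stopping event'' such as the first hitting time of a neighboring state --- which is exactly what the paper exploits by building an irreducible Markov chain $Z_k=(\tilde X_{\tilde\lambda_k}-X_{\lambda_k},\,\min(X_{\lambda_k}\cdot e_2,\tilde X_{\tilde\lambda_k}\cdot e_2)-k)$, identifying $E_{0,0}(\phi_n)$ and $E_{0,e_1}(\phi_n)$ with Green functions of $Z$, and then bounding the difference of Green functions started from two states by the finite expected hitting times between those states (finiteness coming from irreducibility, which in turn uses Assumption \ref{model}(iii)). Your argument conflates these two kinds of bounds.

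Second, you have not reduced the general displacement $u$ to the nearest-neighbor case. Your coupling of the two $\tilde X$-walks started at $u$ and $u+y$ does not obviously give uniformity in $u$: before coalescence the $X$-walk may rack up meetings with one $\tilde X$-walk but not the other, and the number of such ``one-sided'' meetings depends on the joint law of all three walks, not just on the separation of the two $\tilde X$'s. The paper handles uniformity by a separate, purely combinatorial step (Lemma \ref{2sup1sup}): it expands both expectations as path sums and splits according to which of the two meeting events ($x_i=\tilde x_j$ versus $x_i=\tilde x_j+e_1$) occurs first; the Markov property then bounds the general difference by the nearest-neighbor differences $\sup_n|E_{0,0}(\phi_n)-E_{0,\pm e_1}(\phi_n)|$. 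This decomposition-by-first-meeting is the key idea your sketch is missing, and without it the remaining heavy-tail issues you already flagged (the coalescence time of the two $\tilde X$-walks has infinite mean) cannot be circumvented.
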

%\begin{proof}
We first prove the following 
%\end{proof}

\bigskip

\begin{lemma}
\label{2sup1sup}
 \[ \sup_n \sup_{u \in \mathbb{Z}^2} \sup_{ y \in \lbrace e_1,e_2 \rbrace }\Big{\vert}E_{0,u}(\phi_n ) 
- E_{0,u+y}(\phi_n ) \Big{\vert}  \leq C \sup_n \sup_{ y \in \lbrace \pm e_1, \pm e_2 \rbrace } \Big{\vert}E_{0,0}(\phi_n ) 
- E_{0,y}(\phi_n ) \Big{\vert}.\]
\end{lemma}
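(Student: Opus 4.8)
\emph{Proof proposal.} The idea is to rewrite $E_{0,u}(\phi_n)$, for an arbitrary $u$, as an average of a ``same--level Green function'' $G(x;m):=E_{(0,0),(x,0)}(\phi_m)$, $x\in\mathbb Z$, $m\ge0$, and then to see that replacing $u$ by $u+y$ perturbs this average only by an amount comparable to the base--case differences. By the symmetry $\phi_n(X,\tilde X)=\phi_n(\tilde X,X)$ and translation invariance of $\mathbb P$ (in the $e_2$--direction this merely shifts the index $n$, over which a supremum is taken), we may assume $u\cdot e_2\ge0$.

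\emph{Step 1 (Green--function representation).} Because the walks never decrease their $e_2$--coordinate, a walk never returns to a site it has left; hence averaging over the first step of the walk started at the origin leaves the remaining environment i.i.d.\ and independent of that step, so for every $v\ne0$,
\[
E_{0,v}(\phi_n)=\sum_{z}\mathbb E[\omega_{0,z}]\,E_{z,v}(\phi_n).
\]
Iterating this (and, when the upper/lower roles switch, the analogous identity for the lower walk) until the two walks first share a level, and using $e_1$--translation invariance together with the fact that $\phi_n$ sees the configuration only through the $e_2$--coordinates of the meeting levels, one arrives at
\[
E_{0,u}(\phi_n)=E_{0,u}\bigl[\ind\{\hat L\le n\}\,G(\hat W;\,n-\hat L)\bigr],
\]
where $\hat L\ge u\cdot e_2$ is the first common level of the two walks and $\hat W$ their signed $e_1$--offset there. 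By Assumption \ref{model}(iii) and a geometric estimate as in Lemma \ref{expL}, $\hat L-u\cdot e_2$ has exponential tails, uniformly in $u$ and $n$, and vanishes unless the walk from the origin overshoots level $u\cdot e_2$. Writing $\Delta$ for the right--hand side of the lemma, one has $\Delta\ge1$ (e.g.\ $E_{0,0}(\phi_0)-E_{0,e_1}(\phi_0)=1$), and $e_1$--translation invariance gives $|G(x;m)-G(x\pm1;m)|=|E_{0,0}(\phi_m)-E_{0,\pm e_1}(\phi_m)|\le\Delta$, hence $|G(x;m)-G(x';m)|\le|x-x'|\,\Delta$; also $0\le G(x;m)-G(x;m')=\sum_{l=m'+1}^{m}P_{(0,0),(x,0)}(l\text{ is a meeting level})\le m-m'$ for $m'\le m$, and $G(x;m)\le G(0;m)$.

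\emph{Step 2 (coupling the two problems and conclusion).} Couple the problems for $u$ and for $u+y$ by running \emph{the same} walk from the origin in both, stopped at the first level $\ge u\cdot e_2$ in the first and at the first level $\ge(u+y)\cdot e_2$ in the second; the two second walks start at the unit--distance points $u$ and $u+y$. Since $(u+y)\cdot e_2-u\cdot e_2\in\{0,1\}$ and each of $\hat L-u\cdot e_2$, $\hat L'-(u+y)\cdot e_2$ has exponential tails (Step 1), $\mathbb E|\hat L-\hat L'|\le C$ uniformly; and since over such a bounded (exponential--tail) number of levels the $e_1$--positions of both walks move by $O(1)$ in expectation, also $\mathbb E|\hat W-\hat W'|\le C$, uniformly in $u,n,y$. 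Combining with the estimates on $G$,
\[
\bigl|E_{0,u}(\phi_n)-E_{0,u+y}(\phi_n)\bigr|\le \mathbb E\bigl|\ind\{\hat L\le n\}G(\hat W;n-\hat L)-\ind\{\hat L'\le n\}G(\hat W';n-\hat L')\bigr|\le \Delta\,\mathbb E|\hat W-\hat W'|+\mathbb E|\hat L-\hat L'|+C\le C\Delta,
\]
and taking the supremum over $u$, $n$ and $y$ proves the lemma.

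The step I expect to be the main obstacle is making the recursion of Step 1 rigorous: this is exactly where the forbidden direction is essential (so that no environment site is ever reused and the conditional expectations split), and one must check that the overshoot/``catch--up'' corrections — and hence $\hat L-\hat L'$ and $\hat W-\hat W'$ — are genuinely $O(1)$ in expectation uniformly in $u$. Everything else reduces to the two soft monotonicity estimates on $G$ recorded above.
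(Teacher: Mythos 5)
The argument has a genuine gap in Step 1, in the displayed chain of Green-function estimates. You assert that ``$e_1$--translation invariance gives $|G(x;m)-G(x\pm1;m)|=|E_{0,0}(\phi_m)-E_{0,\pm e_1}(\phi_m)|$.'' Translation invariance only lets you shift \emph{both} starting points by the same amount, i.e.\ $G(x;m)=E_{(a,0),(a+x,0)}(\phi_m)$ for every $a$; it does not equate the discrete derivative of $G$ at $x$ with the one at $0$. In fact $|G(x;m)-G(x+1;m)|$ is exactly $|E_{0,u}(\phi_m)-E_{0,u+e_1}(\phi_m)|$ with $u=(x,0)$, which is a subcase of the left-hand side of the lemma you are trying to bound by $\Delta$, so the step is circular. (The same problem appears in your $y=e_1$ case: your coupling gives $\hat L=\hat L'$ and $|\hat W-\hat W'|=1$, and you are left needing $|G(\hat W;m)-G(\hat W\pm1;m)|\le\Delta$ for arbitrary $\hat W$, which is not a base case.)

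The conceptual source of the gap is the choice of where to restart. You restart at the first \emph{common level} $\hat L$, where the offset $\hat W$ is arbitrary, leaving a generic difference $G(\hat W;\cdot)-G(\hat W';\cdot)$. The paper's proof instead splits the path sum according to which of the two events whose counts are being compared occurs first -- the first pair $(i,j)$ with $x_i=\tilde x_j$ versus the first with $x_i=\tilde x_j+e_1$ -- and restarts the two walks there. At that random time the walks are at a forced offset of $0$ (respectively $\pm e_1$), so the two residual counts become exactly $E_{0,0}(\phi_{n-l})$ and $E_{0,\pm e_1}(\phi_{n-l})$; these are base cases and their difference is $\le\Delta$ by definition. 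Your Step-1 recursion and the $y=e_2$ coupling (controlling $|\hat L-\hat L'|$ and $|\hat W-\hat W'|$ via exponential tails) are otherwise reasonable; the fix is to restart at the first shifted/unshifted meeting rather than at the first shared level, so the residual discrepancy is a base-case quantity and not a general increment of $G$.
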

\begin{proof} 
\[ \vert E_{0,u}(\phi_n)-E_{0,u+e_1}(\phi_n)\vert = \Big\vert \mathbb{E}\Big{(}E_{0,u}^{\omega}(\phi_n)\Big{)} - \mathbb{E}\Big{(}E_{0,u+e_1}^{\omega}(\phi_n)\Big{)}  \Big\vert \hspace{5cm}\]
\[ = \Big{\vert} \sum_{\begin{tiny}\begin{array}{c}0=x_0,x_1, \cdots x_n \\ u=\tilde{x}_0,\tilde{x}_1, \cdots , \tilde{x}_n \end{array}\end{tiny}} \big{\vert} (i,j):x_i=\tilde{x}_j, x_i \cdot e_2 \le n, \tilde{x}_j \cdot e_2 \le n \big{\vert} \cdot 
\mathbb{E} \Big{\lbrace}\prod_{i=0}^{n-1}\omega(x_i,x_{i+1})\prod_{j=0}^{n-1}\omega(\tilde{x}_j, \tilde{x}_{j+1} )   \Big{\rbrace} \]
\[ - \sum_{\begin{tiny}\begin{array}{c}0=x_0,x_1, \cdots x_n \\ u=\tilde{x}_0,\tilde{x}_1, \cdots , \tilde{x}_n \end{array}\end{tiny}} \big{\vert} (i,j):x_i=\tilde{x}_j+e_1, x_i \cdot e_2,\tilde{x}_j \cdot e_2 \le n \big{\vert}\cdot \mathbb{E} \Big{\lbrace}\prod_{i=0}^{n-1}\omega(x_i,x_{i+1})\prod_{j=0}^{n-1}\omega(\tilde{x}_j+e_1, \tilde{x}_{j+1}+e_1 )   \Big{\rbrace} \Big{\vert}\]
%\[ \leq \big{\vert} \sum_1 \cdots \big{\vert} + \big{\vert} \sum_2 \cdots \big{\vert} \hspace{8cm} \]
%\[ \leq \sup_n \big{\vert}E_{0,0}(\phi_n)-E_{0,e_1}(\phi_n) \big{\vert} - \sup_n \big{\vert}E_{0,0}(\phi_n)-E_{0,-e_1}(\phi_n) \big{\vert} \hspace{8cm}\]
We now split the common sum into two parts $\sum_1$ and $\sum_2$. The sum $\sum_1$ is over all $0=x_0,x_1,\cdots,x_n$ and $u=\tilde{x}_0,\tilde{x}_1,\cdots,\tilde{x}_n$ such that the first level where $x_i=\tilde{x}_j$ occurs is before the first level where $x_i=\tilde{x}_j+e_1$ occurs. Similarly $\sum_2$ is over all $0=x_0,x_1,\cdots,x_n$ and $u=\tilde{x}_0,\tilde{x}_1,\cdots,\tilde{x}_n$ such that $x_i=\tilde{x}_j$ occurs after $x_i=\tilde{x}_j+e_1$. The above expression now becomes 
 \[ \hspace{1cm}\leq \big{\vert} \sum_1 \cdots \big{\vert} + \big{\vert} \sum_2 \cdots \big{\vert} \leq \sup_n \big{\vert}E_{0,0}(\phi_n)-E_{0,e_1}(\phi_n) \big{\vert} + \sup_n \big{\vert}E_{0,0}(\phi_n)-E_{0,-e_1}(\phi_n) \big{\vert}.\hspace{8cm}\] 
%The proof is by a coupling argument. We look at the individual paths in $E_{0,u}(\phi_n)$ and  $E_{0,u+y}(\phi_n)$. Couple paths so that the $X$ paths are the same in both the terms and $\tilde{X}$ path for the second term  is one step to the right(or up) of the first term.    \\

%\begin{center}	\includegraphics[height=1in]{couple.pdf} \end{center}

\end{proof}
 The proof of Lemma \ref{2sup} will be complete if we show that the right hand side of the inequality in the Lemma \ref{2sup1sup} is finite. We will work with $y=e_1$. Consider the Markov chain on  $\Big{(}\mathbb{Z} \times \{ -(K-1),-(K-2), \dots (K-1) \} \Big) \times \{0,1,\dots K-1\} $ given by 
 \[ Z_k = \Big{(}\tilde{X}_{\tilde{\lambda}_k}-X_{\lambda_k}, \min(X_{\lambda_k} \cdot e_2, \tilde{X}_{\tilde{\lambda}_k}\cdot e_2) -k \Big{)}. \]
This is an irreducible Markov chain (it follows from assumption \ref{model} (iii)). For $z$ in the state space of the $Z$-chain, define the first hitting time of $z$ by $T_z := \inf\{k \ge 1: Z_k =z \}$. For $z,w$ in the state space, define \begin{math} G_n(z,w)=\sum_{k=0}^n P(Z_k=w \vert Z_0=z) \end{math}. Now note that
\[ G_n\Big{[} \big{(}(0,0),0\big{)},\big{(}(0,0),0\big{)}\Big{ ]} \leq E_{\big{(}(0,0),0\big{)}} \Big{[}\sum_{k=0}^{T_{((1,0),0)}} \mathbb{I}_{\lbrace Z_k =  \big{(}(0,0),0\big{)}\rbrace} \Big{]} + G_n\Big{[}\big{(}(1,0),0 \big{)},\big{(}(0,0),0\big{)} \Big{]}.\]
\[ G_n\Big{[} \big{(}(1,0),0\big{)},\big{(}(0,0),0\big{)}\Big{ ]} \leq E_{\big{(}(1,0),0\big{)}} \Big{[}\sum_{k=0}^{T_{((0,0),0)}}\mathbb{I}_{\lbrace Z_k =  \big{(}(0,0),0\big{)}\rbrace} \Big{]} + G_n\Big{[}\big{(}(0,0),0\big{)},\big{(}(0,0),0\big{)} \Big{]} .\]
Since both expectations are finite by the irreducibility of the Markov chain $Z_k$,
\[ \sup_n \Big{\vert}G_n\Big{(} \big{(}(0,0),0\big{)},\big{(}(0,0),0\big{)} \Big{)}-G_n\Big{(} \big{(}(1,0),0\big{)},\big{(}(0,0),0\big{)} \Big{)}\Big{\vert} < \infty. \]
The proof of Lemma \ref{2sup} is complete by noting that 
\[E_{0,0}(\phi_n)= G_n\Big{(} \big{(}(0,0),0\big{)},\big{(}(0,0),0\big{)} \Big{)} \mbox{ and }E_{0,e_1}(\phi_n)= G_n\Big{(} \big{(}(1,0),0\big{)},\big{(}(0,0),0\big{)} \Big{)}.\qed \]
%\end{proof}

\bigskip

\section{Appendix}
\begin{proof} \textbf{of Lemma \ref{MCLT}} We just modify the proof of the Martingale Central Limit Theorem in Durrett~\cite{durr}(Theorem 7.4) and Theorem 3 in \cite{rsptrf}. First let us assume that we are working with scalars, that is $ d=1$ and $\Gamma = \sigma^2$. Suppose that $h$ is H\"older continuous with parameter $\alpha$, that is $\vert h(x)-h(y) \vert  \leq D\vert x- y \vert ^{\alpha}$. We first modify Lemma 6.8 in \cite{durr}.
\end{proof}

\bigskip

\begin{lemma}
\label{lemmaMCLT}
 Let $\tau_{m}^{n}$, $1 \leq m \leq n$, be a triangular array of increasing random variables, that is $\tau_{m}^n \leq \tau_{m+1}^n$. Also assume that $\tau_{[ns]}^{n} \to h(s)$  in probability for each $s \in [0,1]$. Let 
\[S_{n,(u)}= \left \{ \begin{array}{ll}B(\tau_{m}^{n}) & \mbox{for $u=m \in \{0,1,\cdots,n\}$};\\
                 \mbox{linear for $u \in [m-1,m]$}& \mbox{ when $m \in \{ 1,2,\cdots,n\}$}. \end{array} \right.\]
We then have     
\[ \vert \vert S_{n,(n\cdot)} - B\big{(}h(\cdot)\big{)} \vert \vert \to 0  \mbox{ in probability } \]
where $\vert \vert \cdot \vert \vert$ is the sup-norm of $C_{[0,1]}$, the space of  continuous functions in $[0,1]$. 
\end{lemma}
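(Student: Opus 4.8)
The plan is to follow, essentially verbatim, the proof of Durrett's Lemma~6.8 (\cite{durr}), the only modification being that the identity time change is replaced by $h$; the sole properties used are that $B$ has a.s.\ continuous paths and that $h$ is continuous and nondecreasing on $[0,1]$ with $h(0)=0$, $h(1)=\gamma$. First I would localise the time axis. Since $\ell\mapsto\tau^n_\ell$ is nondecreasing and $\tau^n_n=\tau^n_{[n\cdot 1]}\to h(1)=\gamma$ in probability, for every $\eta>0$ there is $n_0$ with $P(\tau^n_n>\gamma+1)<\eta$ for $n\ge n_0$; hence it suffices to prove the statement after replacing $B$ by its restriction to the compact interval $[0,\gamma+1]$, on which the path of $B$ is a.s.\ uniformly continuous. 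Write $\omega_B(\delta)=\sup\{|B(t)-B(s)|:s,t\in[0,\gamma+1],\ |s-t|\le\delta\}$ for the corresponding modulus of continuity, so that $\omega_B(\delta)\downarrow 0$ a.s.\ as $\delta\downarrow 0$.

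Now fix $\e>0$. Choose $\delta>0$ with $P(\omega_B(\delta)>\e)<\e$, and, using that $h$ is uniformly continuous on $[0,1]$, a finite partition $0=t_0<t_1<\cdots<t_J=1$ fine enough that $h(t_j)-h(t_{j-1})<\delta/4$ for every $j$. Applying the hypothesis $\tau^n_{[ns]}\to h(s)$ in probability at the finitely many values $s=t_0,\dots,t_J$ produces $n_1$ such that
\[ P\Big(\max_{0\le j\le J}\big|\tau^n_{[nt_j]}-h(t_j)\big|\ge \delta/4\Big)<\e\qquad(n\ge n_1),\]
and, enlarging $n_1$ if necessary, also $[nt_{j-1}]<[nt_j]$ for every $j$ once $n\ge n_1$.

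For $n\ge\max(n_0,n_1)$ work on the intersection $G_n$ of the events $\{\tau^n_n\le\gamma+1\}$, $\{\omega_B(\delta)\le\e\}$ and $\{\max_j|\tau^n_{[nt_j]}-h(t_j)|<\delta/4\}$, so that $P(G_n)\ge 1-3\e$. Fix $s\in[0,1]$ and choose $i$ with $s\in[t_{i-1},t_i]$; by construction $S_{n,(ns)}$ is a convex combination of the values $B(\tau^n_m)$ over $m\in\{[ns],[ns]+1\}\cap\{0,\dots,n\}$. For each such $m$ one has $[nt_{i-1}]\le m\le[nt_{i+1}]$ (reading $t_{J+1}:=t_J$ when $i=J$), so monotonicity of $\ell\mapsto\tau^n_\ell$ together with the grid bounds valid on $G_n$ give, using $h(t_{i-1})\le h(s)\le h(t_i)$ and $h(t_{i+1})-h(t_{i-1})<\delta/2$,
\[ \big|\tau^n_m-h(s)\big|\ \le\ \tfrac{\delta}{4}+\big(h(t_{i+1})-h(t_{i-1})\big)\ <\ \delta .\]
Since $\tau^n_m,h(s)\in[0,\gamma+1]$ on $G_n$, it follows that $|B(\tau^n_m)-B(h(s))|\le\omega_B(\delta)\le\e$, whence $|S_{n,(ns)}-B(h(s))|\le\e$; as $s\in[0,1]$ was arbitrary, $\|S_{n,(n\cdot)}-B(h(\cdot))\|\le\e$ on $G_n$. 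Hence $P(\|S_{n,(n\cdot)}-B(h(\cdot))\|>\e)\le 3\e$ for all large $n$, and letting $\e\downarrow 0$ yields the claimed convergence in probability.

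The argument carries no real depth beyond the a.s.\ uniform continuity of Brownian paths on a compact interval; everything else is bookkeeping. The one place that genuinely needs attention is the index accounting in the last step: the linear interpolation forces control of $\tau^n$ at the index $[ns]+1$, which can overshoot $[nt_i]$, so the sandwich must reach $[nt_{i+1}]$, and consequently the partition must be chosen so that two consecutive increments of $h$ — not merely one — are small. (Note that Hölder continuity of $h$ is not needed for this lemma; plain continuity and monotonicity suffice.)
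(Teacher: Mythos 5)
Your argument is correct and follows the same partition-plus-monotonicity strategy as the paper's own proof: control $\tau^n$ at finitely many grid points, sandwich the general index using monotonicity and a small increment of $h$, and finish with the modulus of continuity of $B$. The only differences are cosmetic refinements — you localise the Brownian path to the compact interval $[0,\gamma+1]$ rather than the paper's $[0,1]$ (a harmless imprecision there), you handle the interpolation index $[ns]+1$ more explicitly, and you correctly note that plain uniform continuity of $h$ suffices where the paper invokes its H\"older hypothesis.
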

\begin{proof} Since $B$ is uniformly continuous on $[0,1]$, given $\epsilon >0$, we can find a $\delta >0$ such that $\frac{1}{\delta}$ is an integer and \\
\begin{enumerate}
\item
\[ P( \vert B_t - B_s \vert < \epsilon \mbox{ for all } 0 \leq s,t\leq 1 \mbox{ such that } \vert t-s  \vert < 2D \delta^{\alpha} ) > 1- \epsilon \]
and for $n \geq N_{\delta} $,
\item
\[ P\big{(} \vert \tau_{[n k \delta]}^{n} - h(k\delta) \vert < D\delta^{\alpha} \mbox{ for } k=1,2, \cdots \frac{1}{\delta}\big{)} \geq 1 - \epsilon .\]
\end{enumerate}
For $s \in \big{(} (k-1)\delta , k \delta \big{)}$
\begin{eqnarray*}
 \tau_{[ns]}^{n}-h(s) &\geq& \tau_{[n(k-1)\delta]}^n - h(k \delta)\\
&=& \tau_{[n(k-1)\delta]}^n - h\big{(} (k-1)\delta \big{)} + \big{[}h\big{(} (k-1)\delta \big{)} - h\big{(} k\delta \big{)} \big{]}.
\end{eqnarray*}
Also
\begin{eqnarray*}
 \tau_{[ns]}^{n}-h(s) &\leq& \tau_{[nk\delta]}^n - h\big{(}(k-1) \delta\big{)}\\
&=& \tau_{[nk\delta]}^n - h\big{(} k\delta \big{)} + \big{[}h (k\delta ) - h\big{(} (k-1)\delta \big{)} \big{]}.
\end{eqnarray*}
By H\"older continuity, we have
\[h (k\delta ) - h\big{(} (k-1)\delta \big{)} \leq D \delta^{\alpha} .\]
Thus,
\[ \tau_{[n(k-1)\delta]}^n-h\big{(} (k-1)\delta \big{)} - D \delta^{\alpha} \leq \tau_{[ns]}^{n}-h(s) \leq 
   \tau_{[nk\delta]}^n - h\big{(} k\delta \big{)} + D \delta^{\alpha}. \]
From this we can conclude that for $n \geq N_{\delta}$,
\[ P\Big{(} \sup_{0 \leq s \leq 1} \vert \tau_{[ns]}^n - h(s) \vert < 2D\delta^{\alpha} \Big{)} \geq 1 - \epsilon .\]
When the events in $(i)$ and $(ii)$ occur,
\[ \vert S_{n,m} - B\big{(}h(\frac{m}{n})\big{)} \vert = \vert B(\tau_{m}^n) - B\big{(}h(\frac{m}{n}) \big{)} \vert < \epsilon .\]
For $t= \frac{m+\theta}{n}$, $0<\theta < 1$, notice that
\begin{eqnarray*}
 \vert S_{n,(nt)} - B(h(t)) \vert &\leq& (1-\theta)\vert S_{n,m} - B\big{(}h(\frac{m}{n})\big{)} + \theta \vert S_{n,m+1} - B\big{(}h(\frac{m+1}{n})\big{)} \vert \\
&+& (1- \theta)\vert B\big{(}h(\frac{m}{n})\big{)} - B\big{(}h(t)\big{)} \vert + \theta \vert B\big{(}h(\frac{m+1}{n})\big{)} - B\big{(}h(t)\big{)} \vert .
\end{eqnarray*}
The sum of the first two terms is $\leq \epsilon$ in the intersection of the events in $(i)$ and $(ii)$. Also for $n \ge M_{\delta}$, we have $\vert \frac{1}{n} \vert < \delta$ and hence \begin{math} \vert h(\frac{m}{n})-h(t) \vert < 2D \delta^{\alpha}, \vert h(\frac{m+1}{n})-h(t) \vert < 2D \delta^{\alpha}
\end{math}. Hence the sum of the last two terms is also $\le \epsilon$ in the intersection of the events in $(i)$ and $(ii)$.
%$\vert B\big{(}h(\frac{m}{n})\big{)} - B\big{(}h(t)\big{)} \vert = \vert B\big{(}h(\frac{m}{n})\big{)} - B\big{(}h(t)\big{)} \vert$ and $h\big{(}\frac{m}{n} \big{)} - h(t) < 2D \delta^{\alpha} $ for $m \geq M_{\delta}$.\\
Choosing $\delta$ appropriately, we get that for $n \geq \max(N_{\delta}, M_{\delta}) $,
\[ P \Big{(} \vert \vert S_{n,(n\cdot)} - B\big{(}h(\cdot)\big{)} \vert \vert < 2 \epsilon \Big{)} \geq 1- 2 \epsilon . \qquad \]
\end{proof}
\vspace{0.5cm}
We will also need the following lemma later whose proof is very similar to that of the above lemma.

\bigskip

\begin{lemma}
For increasing random variables $\tau_{m}^n$, $1 \leq m \leq [n(1-s)]$ such that $\tau_{[nt]}^n \to h(t+s) - h(s)$ in probability for each $t \in [0,1-s]$, we have
\[ \big\vert \big\vert \, S_{n,(n \cdot)} - \big{[}B\big{(}h(s+ \cdot)\big{)}- B\big{(}h(s)\big{)} \big{]} \,\big \vert \big \vert  \to 0 \hspace{1cm} \mbox{ in probability}. \]

\end{lemma}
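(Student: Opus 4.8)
The plan is to reduce the statement to Lemma \ref{lemmaMCLT} by a deterministic time shift. As in that lemma, $S_{n,(\cdot)}$ is understood to be the piecewise-linear interpolation with nodes $S_{n,(m)}=B\big(h(s)+\tau_{m}^{n}\big)-B\big(h(s)\big)$, $m\in\{0,1,\dots,[n(1-s)]\}$. First I would introduce $\tilde h(t):=h(s+t)-h(s)$ on $[0,1-s]$ and $\tilde B(t):=B\big(h(s)+t\big)-B\big(h(s)\big)$, and record the three facts that make the reduction work: (i) $\tilde h$ is increasing, $\tilde h(0)=0$, and $\alpha$-H\"older with the same constant $D$, since $|\tilde h(t_{1})-\tilde h(t_{2})|=|h(s+t_{1})-h(s+t_{2})|\le D|t_{1}-t_{2}|^{\alpha}$; (ii) by the stationarity and independence of the increments of $B$, $\tilde B$ is again a Brownian motion with the same diffusion coefficient, hence almost surely uniformly continuous on the compact interval $[0,\gamma-h(s)]$ that contains the range of $\tilde h$; and (iii) in this notation $S_{n,(m)}=\tilde B(\tau_{m}^{n})$, while the hypothesis becomes $\tau_{[nt]}^{n}\to\tilde h(t)$ in probability for each $t\in[0,1-s]$. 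Consequently the assertion is precisely the conclusion of Lemma \ref{lemmaMCLT} with the triple $(h,B,[0,1])$ replaced by $(\tilde h,\tilde B,[0,1-s])$.

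Next I would carry out the proof of Lemma \ref{lemmaMCLT} verbatim in this shifted setting. Fix $\epsilon>0$. Uniform continuity of $\tilde B$ on $[0,\gamma-h(s)]$ supplies a $\delta>0$, chosen so that $(1-s)/\delta$ is an integer (the residual partial block, if any, being handled as a separate case), such that $\tilde B$ oscillates by less than $\epsilon$ over every subinterval of length $<2D\delta^{\alpha}$ on an event of probability $>1-\epsilon$; and for $n$ large the convergence $\tau_{[nk\delta]}^{n}\to\tilde h(k\delta)$ yields $|\tau_{[nk\delta]}^{n}-\tilde h(k\delta)|<D\delta^{\alpha}$ simultaneously over $k=1,\dots,(1-s)/\delta$ with probability at least $1-\epsilon$. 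Monotonicity of $m\mapsto\tau_{m}^{n}$ together with the H\"older bound on $\tilde h$ then sandwiches $\tau_{[nt]}^{n}-\tilde h(t)$ between $\tau_{[n(k-1)\delta]}^{n}-\tilde h((k-1)\delta)-D\delta^{\alpha}$ and $\tau_{[nk\delta]}^{n}-\tilde h(k\delta)+D\delta^{\alpha}$ for $t\in((k-1)\delta,k\delta)$, giving $\sup_{0\le t\le1-s}|\tau_{[nt]}^{n}-\tilde h(t)|<2D\delta^{\alpha}$ on the first event. On the intersection of the two events one gets $|S_{n,m}-\tilde B(\tilde h(m/n))|<\epsilon$ at the grid points, and the linear-interpolation estimate of Lemma \ref{lemmaMCLT} --- convexity of the interpolant together with $|\tilde h(m/n)-\tilde h(t)|,\,|\tilde h((m+1)/n)-\tilde h(t)|<2D\delta^{\alpha}$ once $1/n<\delta$ --- upgrades this to $\|S_{n,(n\cdot)}-\tilde B(\tilde h(\cdot))\|<2\epsilon$. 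Hence $P\big(\|S_{n,(n\cdot)}-\tilde B(\tilde h(\cdot))\|<2\epsilon\big)\ge1-2\epsilon$ for all large $n$, which is the claimed convergence in probability.

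I do not expect a genuine obstacle here. The only two points that need a sentence of justification are that $\tilde B$ is indeed a Brownian motion --- so that the pathwise uniform-continuity input of Lemma \ref{lemmaMCLT} is legitimately available --- which is immediate from the stationary independent increments of $B$, and that the truncated index range $1\le m\le[n(1-s)]$ is harmless, affecting only the bookkeeping near the right endpoint $t=1-s$ and the admissible values of $\delta$. With those remarks in place the argument is a line-by-line transcription of the proof of Lemma \ref{lemmaMCLT}.
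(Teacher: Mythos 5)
Your proof is correct and is precisely what the paper has in mind: the paper dispenses with the argument by remarking that ``the proof is very similar to that of the above lemma,'' and your write-up is the faithfully transcribed version of that similarity. The reduction via $\tilde h(t)=h(s+t)-h(s)$ and $\tilde B(t)=B(h(s)+t)-B(h(s))$ is a clean way to make the word ``verbatim'' literally true: you verify that $\tilde h$ inherits monotonicity, $\tilde h(0)=0$, and the $\alpha$-H\"older bound with the same constant $D$, that $\tilde B$ is again a Brownian motion with the same diffusion coefficient, and that $S_{n,(m)}=\tilde B(\tau_m^n)$, after which the statement is exactly Lemma~\ref{lemmaMCLT} on $[0,1-s]$. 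Your caveat about the choice of $\delta$ (arranging $(1-s)/\delta\in\mathbb{N}$ or else absorbing one short terminal block) is the only bookkeeping point that differs from the original proof, and you handle it appropriately. No gap.
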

\vspace{0.5cm}
The statements of Theorems 7.2 and Theorems 7.3 in \cite{durr} are modified for our case by replacing $V_{n,[nt]} \to t\sigma^2$ by $V_{n,[nt]} \to h(t)\sigma^2$. The proofs are almost the same as the proofs in \cite{durr}. We have thus proved the theorem for the case when $d=1$. To prove the vector valued version of the theorem, we refer to Theorem 3 of \cite{rsptrf}. Lemma 3 in \cite{rsptrf} now becomes 
\[ \lim \limits_{ n \to \infty} \frac{E(f(\theta \cdot S_n(s+ \cdot)- \theta \cdot S_n(s) ) Z_n ) }{E(Z_n)} = E\big{[}f\big{(}C_{\theta}(s+ \cdot)\big{)}\big{]} \]
where $C_{\theta}(t)= B_{\theta}(h(t))$ and $B_{\theta}$ is a 1 dimensional B.M. with variance $\theta^{T} \Gamma \theta $. The only other fact we will need while following the proof of Lemma 3 in \cite{rsptrf} is 

\bigskip

\begin{lemma}
 \label{ctsversion}
A one dimensional process $X$  with the same finite dimensional distribution as $C(t)=B(h(t))$ has a continuous version.
\end{lemma}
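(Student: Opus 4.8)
The plan is to invoke the Kolmogorov continuity theorem, for which the Gaussianity of $C$ does all the work. Write $\sigma^2\ge 0$ for the variance parameter of the one-dimensional Brownian motion $B$ (in the scalar reduction $\sigma^2$ is just the diagonal entry of $\Gamma$, and in the vector argument it is $\theta^T\Gamma\theta$); if $\sigma^2=0$ then $C\equiv 0$ and there is nothing to prove, so assume $\sigma^2>0$. Since $h$ is nondecreasing and $B$ has independent Gaussian increments, for $0\le s\le t\le 1$ the increment $C(t)-C(s)=B(h(t))-B(h(s))$ is a centered Gaussian random variable with variance $\sigma^2\,(h(t)-h(s))$. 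A process $X$ with the same finite-dimensional distributions as $C$ has exactly the same increment laws, so it suffices to establish the moment estimate for $C$ and then quote the theorem.

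Next I would combine the Gaussian moment formula with the H\"older regularity of $h$. For any positive integer $m$, a centered Gaussian of variance $v$ has $2m$-th moment $(2m-1)!!\,v^m$, hence
\[
E\,\bigl|C(t)-C(s)\bigr|^{2m}=(2m-1)!!\;\sigma^{2m}\,\bigl(h(t)-h(s)\bigr)^{m}\le (2m-1)!!\;\sigma^{2m}\,D^{m}\,|t-s|^{\alpha m},
\]
using $|h(t)-h(s)|\le D\,|t-s|^{\alpha}$ on $[0,1]$ (for the concrete $h$ in (\ref{h}) one has $\alpha=\tfrac12$, which follows from the boundedness of $f'$ on $(0,1]$ together with $f(t)=O(\sqrt t)$ as $t\downarrow 0$). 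Since $\alpha>0$ and Gaussian variables have finite moments of every order, I can choose $m$ with $\alpha m>1$; the bound then reads $E|C(t)-C(s)|^{p}\le C\,|t-s|^{1+\eta}$ with $p=2m$ and $\eta=\alpha m-1>0$, which is precisely the hypothesis of Kolmogorov's continuity theorem on the compact interval $[0,1]$. Applying it to $C$ — equivalently to $X$, since only the finite-dimensional distributions enter the criterion — yields a modification with $(\eta/p)$-H\"older continuous sample paths on $[0,1]$, which is the assertion.

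I do not expect a genuine obstacle here; the only point to watch is the left endpoint, where $h(0)=0$ and the time change is merely $\tfrac12$-H\"older rather than Lipschitz. This is harmless because the H\"older bound for $h$ is assumed in Lemma \ref{MCLT} (and, for the explicit $h$, verified in the preceding lemma) on all of $[0,1]$, so the moment estimate above is uniform up to $t=0$ and the Kolmogorov criterion applies on the whole interval without any separate argument near the origin.
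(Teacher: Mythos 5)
Your proof is correct and takes the same route as the paper: verify Kolmogorov's continuity criterion by combining the Gaussian moment bound $E\vert B_{h(t)}-B_{h(s)}\vert^{p}\le C\vert h(t)-h(s)\vert^{p/2}$ with the $\alpha$-H\"older continuity of $h$. You are in fact slightly more careful than the paper's own computation, which writes the exponent on $\vert h(t)-h(s)\vert$ as $\beta$ rather than $\beta/2$ and accordingly picks $\beta=[\tfrac{1}{\alpha}]+1$ too small to clear the $1+\eta$ threshold; your choice of an even moment $2m$ with $\alpha m>1$ is the corrected version of the same estimate.
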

\begin{proof} We check Kolmogorov's continuity criterion. Let $\beta =[\frac{1}{\alpha}]+1$.
\begin{eqnarray*}
 E \Big{(}\vert X_t - X_s \vert^{\beta} \Big{)} &=& E \Big{(}\vert B_{h(t)} - B_{h(s)} \vert^{\beta} \Big{)} \\
&\leq& \vert h(t)- h(s) \vert^{\beta} E(Z^{\beta})\\
&\leq& C\vert t-s \vert ^{\alpha\big{(}[\frac{1}{\alpha}]+1\big{)}} 
\end{eqnarray*}
where $Z$ is a standard Normal random variable. 
\end{proof}
\vspace{0.4cm}
\textbf{Acknowledgement}\\
This work is part of my Ph.D. thesis. I would like to thank my advisor Timo Sepp\"al\"ainen for suggesting this problem and for many valuable discussions.


\begin{thebibliography}{99}
\bibitem{brs}Bal\'azs, M\'arton; Rassoul-Agha, Firas; Sepp\"al\"ainen, Timo. The random average process and random walk in a space-time random environment in one dimension.  Comm. Math. Phys.  266  (2006),  no. 2, 499--545.
\bibitem{ber}Bernabei, M. S. Anomalous behaviour for the random corrections to the cumulants of random walks in fluctuating random media.  Probab. Theory Related Fields  119  (2001),  no. 3, 410--432.
\bibitem{bp}Boldrighini, C.; Pellegrinotti, A. $T\sp {-1/4}$-noise for random walks in dynamic environment on $\Bbb Z$.  Mosc. Math. J.  1  (2001),  no. 3, 365--380, 470--471.
\bibitem{durr} Durrett, Richard. Probability: Theory and Examples. Duxbury Advanced Series. Brooks/Cole-Thomson, Belmont, CA, third edition, 2004.
\bibitem{ff} Ferrari, P. A.; Fontes, L. R. G.  Fluctuations of a surface submitted to a random average process. Electron. J. Probab. 3, no. 6, 34pp.,(1998) (electronic)
\bibitem{rsptrf}Rassoul-Agha, Firas; Sepp\"al\"ainen, Timo. An almost sure invariance principle for random walks in a space-time random environment.  Probab. Theory Related Fields  133  (2005),  no. 3, 299--314.
\bibitem{rsqi}Rassoul-Agha, Firas; Sepp\"al\"ainen, Timo. Quenched invariance principle for multidimensional ballistic random walk in a random environment with a forbidden direction.  Ann. Probab.  35  (2007),  no. 1, 1--31.
\bibitem{spitzer} Spitzer, Frank. Principles of Random Walk. Springer-Verlag, New York, 1976.
\bibitem{szn}Sznitman, A. S.(2004) Topics in random walks in random environment. In School and Conference on Probability Theory. ICTP Lect. Notes, XVII. Abdus Salam Int. Cent. Theoret. Phys., Trieste, 203-266(electronic)
\bibitem{zet}Zeitouni, O.(2004) Random walks in random environments. Lecture notes in Mathematics, Vol 1837, Springer-Verlag, Berlin. Lectures from the 31st Summer School on Probability Theory held in Saint-Flour, July 8-25 2001.

\end{thebibliography}
\end{document}